 \newtheorem{thm}{Theorem}[section]
 \newtheorem{lem}[thm]{Lemma}
 \theoremstyle{definition}
 \newtheorem{defn}[thm]{Definition}
 \theoremstyle{remark}
 \numberwithin{equation}{section}
\newcommand{\supp}{{\rm supp}}
\newcommand{\sign}[1]{\mathrm{sgn}\left(#1\right)}
\DeclareMathOperator{\sgn}{sgn}
\numberwithin{equation}{section}
\renewcommand{\i}{\ifmmode\mathit{\mathchar"7010 }\else\char"10 \fi}
\renewcommand{\j}{\ifmmode\mathit{\mathchar"7011 }\else\char"11 \fi}
\newcommand{\R}{\mathbb{R}}
\newcommand{\N}{\mathbb{N}}
\newcommand{\Z}{\mathbb{Z}}
\newcommand{\abs}[1]{\left|#1\right|}
\newcommand{\modulo}[1]{\left|#1\right|}
\newcommand{\norm}[1]{\left\|#1\right\|}
\newcommand{\norma}[1]{{\left\|#1\right\|}}
\newcommand{\px}{\partial_x}
\newcommand{\Cc}[1]{\mathbf{C_c^{#1}}}
\newcommand{\vfi}{\varphi}
\newcommand{\Linf}{{\mathbf{L}^\infty}}
{%

	\begin{enumerate}}%
	{\end{enumerate}}
\begin{document}
\title[Non-local traffic models with space-discontinuous flux]
{Existence of entropy weak solutions for 1D non-local traffic models with space-discontinuous flux}

\author[F. A. Chiarello]{F. A. Chiarello}

\address{DISIM,\\ University of L'Aquila, \\
	Via Vetoio, 67100 \\
	L'Aquila, Italy}

\email{felisiaangela.chiarello@univaq.it}

\thanks{
	HDC and LMV  are supported by project MATH-Amsud 22-MATH-05 “NOTION: NOn-local conservaTION laws for engineering, biological and epidemiological applications: theoretical and numerical” and by INRIA Associated Team ``Efficient numerical schemes for non-local transport phenomena'' (NOLOCO; 2018--2022). Aditionally, LMV was partially supported by ANID-Chile through the project {\sc Centro de Modelamiento Matem\'atico} (AFB170001) of the PIA Program: Concurso Apoyo a Centros Cient\'ificos y Tecnol\'ogicos de Excelencia con Financiamiento Basal and  by Fondecyt project 1181511. HDC was partially supported by the National Agency for Research and Development, ANID-Chile through Scholarship Program, Doctorado Becas Chile 2021, 21210826.}
\author{H. D. Contreras}
\address{GIMNAP-Departamento de Matem\'aticas, \\
	Universidad del B\'io-B\'io, \\
	Concepci\'on, Chile.}
\email{harold.contretas1801@alumnos.ubiobio.cl}
\author{L. M. Villada}
\address{GIMNAP-Departamento de Matem\'aticas, \\
	Universidad del B\'io-B\'io, \\
	Concepci\'on, Chile.}
\email{lvillada@ubiobio.cl}

\subjclass{35L65; 65M12; 90B20}

\keywords{Conservation laws; Traffic models; Numerical scheme; Discontinuous flux; Non-local problem.}

\date{January 1, 2004}

\begin{abstract}
	We study a 1D scalar conservation law whose non-local flux has a single spatial discontinuity. This model is intended to describe traffic flow on a road with rough conditions.
	We approximate the problem through an upwind-type numerical scheme and provide compactness estimates for the sequence of approximate solutions. Then, we prove the existence and the uniqueness of entropy weak solutions. Numerical simulations corroborate the theoretical results and  the limit model as the kernel support tends to zero is numerically investigated. 
\end{abstract}

\maketitle

  \section{Introduction}
\label{sec:introduction}
We are interested in the analysis of the well-posedness and the numerical approximation of solutions of non-local conservation laws with a single spatial discontinuity in the flux
\begin{equation}
\label{eq:CL}
\begin{cases}
\partial_{t} \rho + \px f(t,x,\rho)=0, \quad (t,x)\in (0,\infty)\times \R,\\ 
\rho(0,x)=\rho_0(x), \quad x\in \R,
\end{cases}
\end{equation}
with
\begin{eqnarray*}
	&f(t,x,\rho)=H(-x)\, \,\rho\,g(\rho)\, v_l(\omega_\eta*\rho)+H(x)\, \,\rho\,g(\rho)\, v_r(\omega_\eta*\rho) ,
\end{eqnarray*}
where $H(x)$ is the Heaviside function, with which the flux $f(x,t,\rho)$ has a discontinuity at $x=0$ if the velocity functions $v_l(\rho)$ and $v_r(\rho)$ are different. The function $g$ is positive and such that $g'(\rho)\leq 0$ and $g(\rho_{\max})=0.$ We assume that the convolution term and the kernel function $\omega_\eta$ satisfies
\begin{eqnarray}\label{hyp:convol}
(\omega_\eta*\rho)(t,x)= \int_{x}^{x+\eta} \rho(t,y) \omega_\eta(y-x) \mathrm{d}y,\quad \eta>0 \\
\omega_\eta\in \mathbf{C}^2([0,\eta],\R^+), \quad \omega'_\eta\leq 0,\quad \omega_\eta(\eta)=0,
\end{eqnarray}
and the following hypothesis hold on the velocity functions \begin{eqnarray}\label{hyp_v}
&    v_s(\rho)=k_s \psi(\rho), \quad s=l,r, \quad \psi\in \mathbf{C}^2(\R), \hbox{ s.t. }\psi'\leq 0.
\end{eqnarray}

In the traffic vehicle context $\rho$ represents the density of vehicles on the roads, $\omega_\eta$ is a non-increasing kernel function whose support $\eta$ is proportional to the look-ahead distance of drivers, that are supposed to adapt their velocity with respect to the mean downstream traffic density.
The equation in (\ref{eq:CL}) is a non-local version of a generalized Lightill-Whitham-Richards traffic model \cite{LW,R,GP} with a discontinuous velocity field \cite{CR,KR}. 

Models of conservation laws with non-local flux describe several phenomena such as slow erosion of granular flow \cite{Amadori2012, ShenZhang}, synchronization \cite{AMADORI2017}, sedimentation \cite{Betancourt2011}, crowd dynamics \cite{ColomboMercier2012}, navigation processes \cite{amorim2020non} and traffic flow \cite{BlandinGoatin2016, keimer2020existence,Chiarello,ChiarelloGoatin, ChiarelloGoatinVillada2018}.
In particular, non-local traffic models describe the behaviour of drivers that adapt their velocity with respect to what happens to the cars in front of them. See \cite{Chiarello} for an overview about non-local traffic models and \cite{chiarelloFriedrichGoatinGK} for a continuous non-local model describing the behavior of drivers on two stretches of a road with different velocities and capacities.


There are many results relating to existence, uniqueness, stability and numerical approximation of weak entropy solutions of \textit{local} conservation laws with a spatially discontinuous flux \cite{CR,KR,adimurthi2005optimal, AudussePerthame2005, burger2008family, BurgerKarlsenTowers,GNPT,GimseRisebro91,GimseRisebro92, KarlsenRisebroTowers,KarlsenTowersLax,KarlsenTowers2016}. Conversely, in the non-local case, traveling waves for a traffic flow model with rough road conditions was studied in \cite{Shen2018}, with the following velocity functions
\begin{equation*}
	v_s(\rho)=k_s (1-\rho), \quad s=l,r \quad  g(\rho)=1.
\end{equation*}
But it is worth pointing out that in the latter case with $k_l>k_r$ and $g(\rho)=1$, the non-local model does not satisfy the Maximum principle, as it is showed in \cite{chiarelloCoclite}. On the contrary, model (\ref{eq:CL}) satisfies the Maximum principle and this makes it more realistic in the sense of traffic flow dynamics. 

In this sense, the aim of this paper is manifold:
\begin{itemize}
	\item we prove the well-posedness of the non-local space-discontinuous traffic model~(\ref{eq:CL}) for a general non-increasing speed function $\psi$, approximating the problem through a monotone numerical scheme and proving standard compactness estimates;
	\item we numerically study  the limit model as the support of the kernel function tends to $0^+$. 
\end{itemize}
Following \cite{KarlsenTowersLax}, we recall the following definitions of solution.

\begin{defn}
	\label{def:sol}
	We say that a function $\rho\in(\mathbf{L}^1\cap \mathbf{L}^\infty)([0,T]\times\R; [0,\rho_{\max}])$ is a weak solution of the initial value problem (\ref{eq:CL}) if
	for any test function $\vfi\in \Cc1([0,T[\times\R; \R)$ 
	\begin{equation*}
		\int_0^T\int_\R\left( \rho \partial_{t}\vfi+ f(t,x,\rho)\px\vfi\right)\mathrm{d}t \mathrm{d}x +\int_\R \rho_0(x)\vfi(0,x)\mathrm{d}x=0.
	\end{equation*}
\end{defn}

\begin{defn}\label{def:solution}
	A function $\rho\in(\mathbf{L}^1\cap \mathbf{L}^\infty)([0,T]\times\R; [0,\rho_{\max}])$ is an entropy weak solution of (\ref{eq:CL}), if for all $c\in[0,\rho_{\max}],$ and  any test function $\varphi\in\Cc1([0,T[\times\R;\R^+)$ 
	\begin{eqnarray*}
		\int_0^{T}\int_{\R} \abs{\rho-c}\varphi_t+\sign{ \rho-c }(f(t,x,\rho)-f(t,x,c))\, \varphi_x\, \mathrm{d}x\,\mathrm{d}t\\
		\hspace{2em}	-\int_0^{T}\int_{\R^\ast} \sign{\rho-c} f(t,x,c)_x \varphi\, \mathrm{d}x\,\mathrm{d}t+ \int_{\R} \abs{\rho_0(x)-c}\varphi(0,x) \mathrm{d}x \\
		\hspace{2em}+ \int_{0}^{T}  \abs{(k_r-k_l) c\,g(c) \,\psi(\rho\ast\omega_\eta )} \varphi(t,0) \mathrm{d}t\geq0.
	\end{eqnarray*}
\end{defn}

The paper is organized as follows. 
In Section \ref{sec:numericalscheme}, we introduce the numerical scheme that we use to discretize our problem. After that, in Section \ref{sec:well-posedness} we prove the existence and uniqueness of weak entropy solutions with $ \mathbf{L}^\infty$ and $\mathbf{BV}$ bounds. Finally, in Section \ref{sec:numericaltests}, we show some numerical tests illustrating the behaviour of solutions and investigating the limit model as the support of the kernel $\eta \to 0^+$. 

\section{Numerical scheme}
\label{sec:numericalscheme}
We introduce a uniform space mesh of width $\Delta x$ and a time step $\Delta t$, subject to a CFL condition, to be detailed later on. The spatial domain is discretized into uniform cells $I_j=[x_{j-1/2},x_{j+1/2})$, where $x_{j+1/2}=x_j+\Delta x/2$ are the cell interfaces, and $x_j=j\Delta x$ the cell centers, in particular $x=0$ where the flux function changes, falls at the midpoint of the cell $I_0=[x_{-1/2},x_{1/2})$. We take $\Delta x$ such that $\eta=N\Delta x$ for some $N\in\N$. Let $t^n=n\Delta t$ be the time mesh and $\lambda=\Delta t/\Delta x$. 
We aim to construct a finite volume approximate solution $\rho_{\Delta}$ such that $\rho_{\Delta}(t,x)=\rho_j^n$ for $(t,x)\in[t^n, t^{n+1}[\times[x_{j-1/2}, x_{j+1/2})$. To this end, we approximate the initial datum $\rho_0$ with the cell averages
$$\rho^0_{j}=\frac{1}{\Delta x}\int_{x_{j-1/2}}^{x_{j+1/2}}\rho_0(x)\mathrm{d} x,$$
we denote $\omega_k:=\frac{1}{\Delta x}\int_{k\Delta x}^{(k+1)\Delta x}\omega(y)\mathrm{d}y$ for $k=0,\dots,N-1$ and set the convolution term 
$$R_{j+1/2}^n:=(\omega_{\eta}\ast \rho_{\Delta})(x_{j+1/2},t^n)\approx \Delta x\sum_{k=0}^{N-1}\omega_k\rho_{j+k+1}^n.$$
In this way we can define the following finite volume scheme $\forall j\in\Z$
\begin{equation}\label{scheme}
\rho_j^{n+1}=\rho_j^{n}-\lambda\left( F(x_{j+\frac12},\rho^n_j, \rho^n_{j+1},R^n_{j+\frac12})-F(x_{j-\frac12},\rho^n_{j-1}, \rho^n_{j}, R^n_{j-\frac12}) \right),
\end{equation}
where $F$ is a {upwind-type} numerical flux motivated by the Scheme 3 introduced in \cite{burger2008family}  
\begin{equation}\label{numflux}
F(x_{j+1/2},\rho_{j}, \rho_{j+1},R_{j+1/2})=
\begin{cases}
\rho_j g(\rho_{j+1}) \,v_l(R_{j+1/2}) \hbox{ if } x_{j+1/2}<0,\\
\rho_j g(\rho_{j+1})\, v_r(R_{j+1/2}) \hbox{ if } x_{j+1/2}>0.
\end{cases}
\end{equation}

\section{Well-posedness}\label{sec:well-posedness}
{In this Section, we prove some properties of the the finite volume scheme (\ref{scheme})-(\ref{numflux}}).
\begin{lem}\label{bounds}
	Let hypotheses \eqref{hyp_v} hold. Given an initial datum such that $0\leq \rho_j^0 \leq \rho_{\max}$ for $j\in\Z$, then the finite volume scheme \eqref{scheme}-\eqref{numflux} is such that $$0\leq \rho_j^{n+1}\leq \rho_{\max}, \qquad j\in\Z,\quad n=0,1,\dots $$ 
	under the CFL condition 
	\begin{equation}\label{cfl}
	\Delta t\leq \min_{s=l,r}\left\{ \frac{\Delta x}{\rho_{\max} k_s \norm{g'}_{\Linf}\norm{\psi}_{\Linf}}, \frac{\Delta x}{k_s \norm{g}_{\Linf}\norm{\psi}_{\Linf}} \right\}.
	\end{equation}
\end{lem}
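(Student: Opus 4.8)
The plan is to argue by induction on the time level $n$, the base case $n=0$ being precisely the assumption $0\le\rho_j^0\le\rho_{\max}$. Suppose $0\le\rho_j^n\le\rho_{\max}$ for every $j\in\Z$. First I would record the elementary a priori bounds on the quantities entering \eqref{numflux}: since $\omega_k\ge0$ and $\Delta x\sum_{k=0}^{N-1}\omega_k=\int_0^\eta\omega_\eta=1$ by normalization of the kernel, the discrete convolution $R_{j+1/2}^n$ is a convex combination of the values $\rho_{j+k+1}^n\in[0,\rho_{\max}]$, hence $R_{j+1/2}^n\in[0,\rho_{\max}]$; consequently, using \eqref{hyp_v} and $\psi\ge0$ on $[0,\rho_{\max}]$ (velocities being nonnegative in the traffic interpretation), $0\le v_s(R_{j+1/2}^n)=k_s\psi(R_{j+1/2}^n)\le k_s\|\psi\|_{\Linf}$ for $s=l,r$; likewise $0\le g(\rho^n_{j+1})\le\|g\|_{\Linf}$ because $g$ is nonnegative and nonincreasing.

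Writing \eqref{scheme}--\eqref{numflux} in expanded form
\[
\rho_j^{n+1}=\rho_j^n-\lambda\,\rho_j^n\, g(\rho_{j+1}^n)\,v_{s}(R_{j+1/2}^n)+\lambda\,\rho_{j-1}^n\, g(\rho_j^n)\,v_{s'}(R_{j-1/2}^n),
\]
where $s,s'\in\{l,r\}$ are chosen according to the signs of $x_{j+1/2}$ and $x_{j-1/2}$, I would establish the two inequalities separately by direct estimates, rather than invoking a monotonicity property, since the convolution renders the full scheme non-monotone in general. For the lower bound the last term is nonnegative, so $\rho_j^{n+1}\ge\rho_j^n\bigl(1-\lambda\, g(\rho_{j+1}^n)v_s(R_{j+1/2}^n)\bigr)\ge\rho_j^n\bigl(1-\lambda k_s\|g\|_{\Linf}\|\psi\|_{\Linf}\bigr)\ge0$, the bracket being nonnegative exactly when $\Delta t\le\Delta x/(k_s\|g\|_{\Linf}\|\psi\|_{\Linf})$, i.e.\ under the second term in \eqref{cfl}. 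For the upper bound I would exploit $g(\rho_{\max})=0$ together with $g'\le0$: the mean value theorem gives $g(\rho_j^n)=g(\rho_j^n)-g(\rho_{\max})\le\|g'\|_{\Linf}(\rho_{\max}-\rho_j^n)$. Dropping the nonpositive middle term and bounding $\rho_{j-1}^n\le\rho_{\max}$ and $v_{s'}(R_{j-1/2}^n)\le k_{s'}\|\psi\|_{\Linf}$ yields $\rho_j^{n+1}\le(1-\theta)\rho_j^n+\theta\rho_{\max}$ with $\theta:=\lambda\rho_{\max}k_{s'}\|g'\|_{\Linf}\|\psi\|_{\Linf}$; the first term in \eqref{cfl} is precisely what forces $\theta\le1$, so $\rho_j^{n+1}$ is a convex combination of $\rho_j^n\le\rho_{\max}$ and $\rho_{\max}$, hence $\le\rho_{\max}$. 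This closes the induction.

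I expect the upper bound to be the only genuinely nontrivial point: the lower bound is an essentially automatic consequence of the conservative upwind structure, whereas controlling $\rho_j^{n+1}$ from above really requires using $g(\rho_{\max})=0$, and it is this estimate that dictates the first entry of the CFL condition \eqref{cfl}. A secondary, purely bookkeeping issue is the interface cell $I_0=[x_{-1/2},x_{1/2})$: there $x_{-1/2}<0<x_{1/2}$, so the two numerical fluxes in the update of $\rho_0^n$ carry the different velocities $v_l$ and $v_r$; because the bound \eqref{cfl} is taken as a minimum over $s=l,r$, both estimates above hold verbatim regardless of which of $v_l,v_r$ multiplies each flux, and the discontinuity needs no special treatment beyond this.
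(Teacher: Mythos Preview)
Your proposal is correct and follows essentially the same approach as the paper: induction on $n$, dropping the outgoing flux term to get the lower bound via the second entry of \eqref{cfl}, and dropping the nonpositive outgoing term together with the mean-value estimate $g(\rho_j^n)\le\|g'\|_{\Linf}(\rho_{\max}-\rho_j^n)$ (from $g(\rho_{\max})=0$) to get the upper bound via the first entry. The paper separates out the interface cell $j=0$ explicitly whereas you absorb it into the $\min_{s=l,r}$ observation, but the estimates are identical.
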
 
\begin{proof}
	By induction, assume that $0\leq\rho^n_j\leq \rho_{\max}$ for all $j\in\Z$.
	Let us consider $j\neq0$ and set $v(\rho):=k_s\psi(\rho)$ for $s=l,r.$ 
	In this case, we can observe that 
	\begin{align*}
		\rho_j^{n+1}&=\rho_j^n-\lambda\left(\rho_j^n g(\rho^n_{j+1}) v(R^n_{j+1/2})-\rho_{j-1}^n g(\rho^n_{j}) v(R^n_{j-1/2})\right)\\
		&\leq \rho^n_j+\lambda \rho_{\max} \norm{g'}_{\Linf} k_s \norm{\psi}_{\Linf} (\rho_{\max}-\rho^n_j).
	\end{align*}
	
	Under the CFL condition \eqref{cfl}, we conclude $\rho^{n+1}_j \leq \rho_{\max}$ for all $j\in \Z^\ast$.\\
	For $j=0,$ we obtain
	\begin{align*}
		\rho_0^{n+1}&=\rho_0^n-\lambda\left(\rho_0^n g(\rho_1^n) v_r(R^n_{1/2})-\rho_{-1}^n g(\rho^n_0) v_l(R^n_{-1/2})\right)\\
		&\leq\rho_0^n+\lambda\left(\rho_{\max} g(\rho^n_0) v_l(R^n_{-1/2})\right)\\
		&\leq\rho_0^n+\lambda k_l \rho_{\max}\norm{g'}_{\Linf}\norm{\psi}_{\Linf} (\rho_{\max}-\rho^n_0)\leq \rho_{\max}.
	\end{align*}
	To prove the positivity $\rho^{n+1}_j\geq 0$, we observe that 
	\begin{align*}
		\rho_j^{n+1}&=\rho_j^n-\lambda\left(\rho_j^n g(\rho^n_{j+1})v(R^n_{j+1/2})-\rho_{j-1}^n g(\rho^n_{j}) v(R^n_{j-1/2})\right)\\
		&\geq \rho_j^n\left(1-\lambda g(\rho^n_{j+1}) v(R^n_{j+1/2})\right)\\
		&\geq0.
	\end{align*}
	This concludes the proof.
\end{proof}

\begin{lem}[$\mathbf{L}^1 $ norm] \label{lem:l1}
	Let hypotheses \eqref{hyp_v} hold.
	If $\rho_0\in \mathbf{L}^1(\R;\R^{+})$ then  under the CFL condition \eqref{cfl}, the approximate solution $\rho_\Delta$ constructed through the finite volume scheme \eqref{scheme}-\eqref{numflux} satisfies 
	\begin{equation}\label{L1-norm}
	\norm{\rho_{\Delta}(t,\cdot)}_{\mathbf{L}^1}=\norm{\rho_{0}}_{\mathbf{L}^1},\quad \text{for all $t>0$}.
	\end{equation}
\end{lem}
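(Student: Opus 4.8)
The plan is to use the conservation form of the scheme \eqref{scheme} together with the non-negativity already established in Lemma \ref{bounds}. Since $0\le\rho_j^n\le\rho_{\max}$ holds for all $j\in\Z$ and all $n$ under \eqref{cfl}, the $\mathbf{L}^1$ norm of the piecewise-constant reconstruction is simply
\[
\norm{\rho_\Delta(t^n,\cdot)}_{\mathbf{L}^1}=\Delta x\sum_{j\in\Z}\abs{\rho_j^n}=\Delta x\sum_{j\in\Z}\rho_j^n ,
\]
so it is enough to show that $\sum_{j\in\Z}\rho_j^{n+1}=\sum_{j\in\Z}\rho_j^n$ for every $n\ge0$, and to recall that $\Delta x\sum_{j\in\Z}\rho_j^0=\sum_{j\in\Z}\int_{I_j}\rho_0\,\mathrm dx=\norm{\rho_0}_{\mathbf{L}^1}$, using $\rho_0\ge0$.

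First I would record the elementary bound on the numerical flux \eqref{numflux}: since $g$ and $\psi$ are bounded on the range of their arguments,
\[
\abs{F(x_{j+1/2},\rho_j^n,\rho_{j+1}^n,R_{j+1/2}^n)}\le C\,\rho_j^n,\qquad C:=\max_{s=l,r}k_s\norm{g}_{\Linf}\norm{\psi}_{\Linf}.
\]
Combined with $\rho_j^{n+1}\ge0$ from Lemma \ref{bounds} and \eqref{scheme}, this gives $0\le\rho_j^{n+1}\le\rho_j^n+\lambda C\rho_j^n+\lambda C\rho_{j-1}^n$, whence $\sum_{j\in\Z}\rho_j^{n+1}\le(1+2\lambda C)\sum_{j\in\Z}\rho_j^n$. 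Since $\sum_{j\in\Z}\rho_j^0=\norm{\rho_0}_{\mathbf{L}^1}/\Delta x<\infty$, an induction shows $\sum_{j\in\Z}\rho_j^n<\infty$ for all $n$; in particular $\sum_{j\in\Z}\abs{F(x_{j+1/2},\rho_j^n,\rho_{j+1}^n,R_{j+1/2}^n)}<\infty$, so these fluxes tend to $0$ as $j\to\pm\infty$.

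Then I would sum the identity \eqref{scheme} over $j\in\{-M,\dots,M\}$. The flux differences telescope, and only the two extreme fluxes survive:
\[
\sum_{j=-M}^{M}\rho_j^{n+1}=\sum_{j=-M}^{M}\rho_j^n-\lambda\big(F(x_{M+1/2},\rho_M^n,\rho_{M+1}^n,R_{M+1/2}^n)-F(x_{-M-1/2},\rho_{-M-1}^n,\rho_{-M}^n,R_{-M-1/2}^n)\big).
\]
Passing to the limit $M\to\infty$ and using the decay of the fluxes proved above yields $\sum_{j\in\Z}\rho_j^{n+1}=\sum_{j\in\Z}\rho_j^n$. By induction on $n$ this equals $\sum_{j\in\Z}\rho_j^0$, so $\norm{\rho_\Delta(t^n,\cdot)}_{\mathbf{L}^1}=\norm{\rho_0}_{\mathbf{L}^1}$ for all $n$; finally, $\rho_\Delta(t,\cdot)$ is constant in $t$ on each $[t^n,t^{n+1})$, which gives \eqref{L1-norm} for every $t>0$.

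There is no real obstacle here: conservativity of \eqref{scheme} does the work. The only point deserving a little care is legitimizing the telescoping of an infinite sum, i.e.\ the spatial decay of $\rho_j^n$ as $\abs{j}\to\infty$; this is not automatic because $\rho_0$ is only assumed integrable (not compactly supported), but it is supplied by the a priori estimate $\sum_{j\in\Z}\rho_j^{n+1}\le(1+2\lambda C)\sum_{j\in\Z}\rho_j^n$ propagated from the datum.
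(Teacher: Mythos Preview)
Your proof is correct and follows essentially the same route as the paper: invoke the positivity from Lemma~\ref{bounds} so that the $\mathbf{L}^1$ norm reduces to $\Delta x\sum_j\rho_j^n$, and then use the conservative form of \eqref{scheme} to see that this sum is preserved from step to step. The paper's own argument is just the one-line induction ``positivity plus conservativity'', whereas you additionally justify the telescoping over $j\in\Z$ by first propagating the finiteness of $\sum_j\rho_j^n$ and hence the decay of the boundary fluxes; this extra care is welcome, since the paper tacitly assumes it.
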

\begin{proof}
	By induction, suppose that \eqref{L1-norm} holds for $t^n=n\Delta t$. Thanks to the positivity and the conservative form of the numerical scheme \eqref{scheme} we have 
	\begin{eqnarray*}
		\norm{\rho^{n+1}}_{\mathbf{L}^1}&=&\Delta x \sum_{j\in\Z}\rho_{j}^{n+1}
		=\norm{\rho^{n}}_{\mathbf{L}^1}.
	\end{eqnarray*}
\end{proof}

We now prove the $\mathbf{L}^1$-continuity in time by following the idea introduced in \cite{KarlsenRisebroTowers}.\\
For the sake of simplicity we use the following notation throughout the proof, let us define 
\begin{equation*}
	v^n_{j+1/2}:=\begin{cases}
		v_l(R^n_{j+1/2}), \quad \hbox{if } j< {0},\\
		v_r(R^n_{j+1/2}), \quad \hbox{if } j\geq 0.
	\end{cases}
\end{equation*}

\begin{lem} \label{lem: continuity_in_time}
	Set $N_T=\lfloor T/\Delta t \rfloor.$ 
	Let $\rho_0\in \mathbf{BV}(\R;[0,\rho_{\max}])$ with $\norm{\rho_0}_{\mathbf{L}^1}<+\infty.$
	Assume that the following CFL condition holds
	\begin{equation}\label{eq:CFL_bv}
	\Delta t \leq\min_{s=l,r} \left\{\frac{\Delta x}{\rho_{\max}k_s\|\psi\|_{\Linf}\left( \|g\|_{\Linf}+ \|g'\|_{\Linf}\right)+\Delta x \rho_{\max} \omega_\eta(0) k_s\|\psi'\|_{\Linf} \|g\|_{\Linf}}\right\}
	\end{equation}
	Then, for $n=0,...,N_T-1$ 
	\begin{equation}
	\Delta x \sum_{j\in\Z}|\rho_j^{n+1}-\rho_j^{n}|\leq \text{C(T)},
	\end{equation}
	where
	\begin{eqnarray*}
		\text{C(T)}&=&e^{({2 T \rho_{\max} \|g\|_{\Linf} \|v'\|_{\Linf}})}\Delta t\bigg(3  \max_{s=l,r}\left\{\left( \norm{\psi}_{\Linf} \norm{g}_{\Linf}+\rho_{\max}\norm{\psi'}_{\Linf}\norm{\omega}_{\mathbf{L}^1}\norm{g}_{\Linf}\right.\right.\\
		&&\left.\left.+\rho_{\max}  \norm{\psi}_{\infty} \norm{g'}\right)k_s \right\}\text{TV}(\rho_0) 
		+\Delta t\rho_{\max}|k_r-k_l|\norm{\psi}_{\Linf} \norm{g}_{\Linf} \bigg) .
	\end{eqnarray*}
\end{lem}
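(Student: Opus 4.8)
I would follow the recursion of \cite{KarlsenRisebroTowers}. Set $W^n:=\Delta x\sum_{j\in\Z}|\rho_j^{n+1}-\rho_j^n|$. The plan is to prove a one--step estimate
\[
W^n\le(1+C\,\Delta t)\,W^{n-1},\qquad 1\le n\le N_T-1,\qquad C:=2\rho_{\max}\|g\|_{\Linf}\|v'\|_{\Linf},
\]
together with a direct bound on the first step $W^0$; since $n\Delta t<T$, iterating then gives $W^n\le e^{CT}W^0$, which has the form $\mathrm{C(T)}$ once $W^0$ is estimated below.

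For the recursion ($n\ge1$) I would subtract \eqref{scheme} written at the levels $n$ and $n-1$,
\[
\rho_j^{n+1}-\rho_j^n=(\rho_j^n-\rho_j^{n-1})-\lambda\big[(F^n_{j+1/2}-F^{n-1}_{j+1/2})-(F^n_{j-1/2}-F^{n-1}_{j-1/2})\big],
\]
with $F^n_{j+1/2}=\rho_j^n g(\rho_{j+1}^n)\,v^n_{j+1/2}$. The first thing to observe is that for a \emph{fixed} interface the velocity ($v_l$ or $v_r$, according to $\sgn x_{j+1/2}$) is the same at both time levels, so no term proportional to $|k_r-k_l|$ enters here: the spatial discontinuity is invisible to the recursion and only surfaces in $W^0$. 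Writing $F=\rho\,g(\rho)\,v_s(R)$ and applying the mean value theorem, each increment $F^n_{j\pm1/2}-F^{n-1}_{j\pm1/2}$ decomposes into a term proportional to $\rho_j^n-\rho_j^{n-1}$, one proportional to $\rho_{j+1}^n-\rho_{j+1}^{n-1}$ (through $g'\le0$), and one proportional to $R^n_{j\pm1/2}-R^{n-1}_{j\pm1/2}=\Delta x\sum_{k=0}^{N-1}\omega_k(\rho^n_{j+k+1}-\rho^{n-1}_{j+k+1})$ (through $v_s'=k_s\psi'\le0$). This represents $\rho_j^{n+1}-\rho_j^n$ as $\sum_m c_{j,m}(\rho_m^n-\rho_m^{n-1})$. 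The conservative form of the scheme forces $\sum_j c_{j,m}=1$, and the CFL condition \eqref{eq:CFL_bv} — whose last summand $\Delta x\rho_{\max}\omega_\eta(0)k_s\|\psi'\|_{\Linf}\|g\|_{\Linf}$ is precisely what is needed — makes the diagonal coefficient $c_{j,j}$ nonnegative. The two off--diagonal pieces have discrete divergence form; summing over $j$, using $\Delta x\sum_{k=0}^{N-1}\omega_k=\|\omega_\eta\|_{\mathbf{L}^1}$ and a summation by parts that exploits $\omega_\eta'\le0$, $\omega_\eta(\eta)=0$, one absorbs them with an $O(\Delta t)$ coefficient, yielding $W^n\le(1+C\Delta t)W^{n-1}$.

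For the first step one has $\rho_j^1-\rho_j^0=-\lambda(F^0_{j+1/2}-F^0_{j-1/2})$, hence $W^0=\Delta t\sum_j|F^0_{j+1/2}-F^0_{j-1/2}|$. For $j\ne0$ the two fluxes use the same velocity, and the same telescoping bounds the three pieces, after summation over $j$, by $\|g\|_{\Linf}\|v_s\|_{\Linf}\mathrm{TV}(\rho_0)$, by $\rho_{\max}\|g'\|_{\Linf}\|v_s\|_{\Linf}\mathrm{TV}(\rho_0)$, and by $\rho_{\max}\|g\|_{\Linf}\|v_s'\|_{\Linf}\|\omega_\eta\|_{\mathbf{L}^1}\mathrm{TV}(\rho_0)$. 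At $j=0$ I would insert $\pm\,\rho^0_{-1}g(\rho^0_0)v_r(R^0_{-1/2})$,
\[
F^0_{1/2}-F^0_{-1/2}=\big[\rho^0_0 g(\rho^0_1)v_r(R^0_{1/2})-\rho^0_{-1}g(\rho^0_0)v_r(R^0_{-1/2})\big]+\rho^0_{-1}g(\rho^0_0)(k_r-k_l)\psi(R^0_{-1/2}),
\]
absorb the bracket into the preceding $\mathrm{TV}(\rho_0)$ sums, and bound the remaining term by $\rho_{\max}\|g\|_{\Linf}|k_r-k_l|\|\psi\|_{\Linf}$. This gives
\[
W^0\lesssim\Delta t\Big(3\max_{s=l,r}\big\{\big(\|\psi\|_{\Linf}\|g\|_{\Linf}+\rho_{\max}\|\psi'\|_{\Linf}\|\omega_\eta\|_{\mathbf{L}^1}\|g\|_{\Linf}+\rho_{\max}\|\psi\|_{\Linf}\|g'\|_{\Linf}\big)k_s\big\}\mathrm{TV}(\rho_0)+\rho_{\max}|k_r-k_l|\|\psi\|_{\Linf}\|g\|_{\Linf}\Big),
\]
and multiplying by $e^{CT}=e^{2T\rho_{\max}\|g\|_{\Linf}\|v'\|_{\Linf}}$ from the iteration yields a bound of the claimed form $\mathrm{C(T)}$.

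The hard part is the $\mathbf{L}^1$ book-keeping of the non-local velocity increments in the recursion step: because $R^n_{j\pm1/2}-R^{n-1}_{j\pm1/2}$ couples the whole stencil $\rho^n_{j+1},\dots,\rho^n_{j+N}$, a crude triangle inequality produces a factor $\lambda\|\omega_\eta\|_{\mathbf{L}^1}=O(1)$ instead of $O(\Delta t)$, which would be fatal after the $O(1/\Delta t)$ iterations; recovering the correct $O(\Delta t)$ size genuinely requires the divergence structure of the flux increments together with the monotonicity of the kernel. The other point to check carefully is that \eqref{eq:CFL_bv} is exactly strong enough to keep all the diagonal coefficients nonnegative.
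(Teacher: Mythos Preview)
Your strategy coincides with the paper's: a one--step recursion $W^n\le(1+C\Delta t)W^{n-1}$ with $C=2\rho_{\max}\|g\|_{\Linf}\|v'\|_{\Linf}$, iterated to $e^{CT}W^0$, combined with a direct bound on $W^0$ obtained by splitting the flux differences into $j<0$, $j=0$, $j>0$ and inserting a cross term at $j=0$ to isolate the $|k_r-k_l|$ contribution. The mean--value decomposition of the flux increments, the observation that the interface discontinuity is invisible to the recursion and surfaces only in $W^0$, and the role of the CFL condition \eqref{eq:CFL_bv} in making the diagonal coefficient nonnegative are all exactly as in the paper (the paper inserts $\pm\rho_{-1}^0g(\rho_0^0)v_r(R^0_{1/2})$ rather than your $v_r(R^0_{-1/2})$, but either choice works).

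The one point of divergence is your handling of the nonlocal remainder in the recursion. The paper does \emph{not} invoke the monotonicity of $\omega_\eta$ or any summation by parts here: after the local pieces telescope to leave the coefficient $1$ on $|\rho_j^n-\rho_j^{n-1}|$, it simply bounds each of the two residual sums
\[
\lambda\,\rho\,g(\rho)\,|v'|\,\Delta x\sum_{k\ge1}\omega_k\,|\rho_{j+k}^n-\rho_{j+k}^{n-1}|
\]
by the plain triangle inequality, swaps $\sum_j$ and $\sum_k$, and records the per--step factor $1+2\Delta t\,\rho_{\max}\|g\|_{\Linf}\|v'\|_{\Linf}\sum_{k}\omega_k$ before passing directly to the exponential. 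In other words, the ``hard part'' you flag is treated in the paper precisely by the crude bound you caution against, not by a sharper argument exploiting $\omega_\eta'\le0$ and $\omega_\eta(\eta)=0$; your proposed refinement via kernel monotonicity is not part of the paper's proof (nor is it needed to reproduce the paper's stated constant).
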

\begin{proof}
	{First,} we fix $j\in\Z,$ by (\ref{scheme}) we have
	\begin{eqnarray*}
		\rho_j^{n+1}-\rho_j^{n}=\rho_j^{n}-\rho_j^{n-1}-\lambda \bigg(\rho^n_j g(\rho^n_{j+1}) v^n_{j+1/2}\pm\rho^{n-1}_{j} g(\rho^{n-1}_{j+1})v^{n}_{j+1/2}-\rho^{n-1}_jg(\rho^{n-1}_{j+1})v^{n-1}_{j+1/2}\\
		-\rho^n_{j-1} g(\rho^n_{j})v^n_{j-1/2}\pm \rho^{n-1}_{j-1} g(\rho^{n-1}_{j})v^{n}_{j-1/2 }+\rho^{n-1}_{j-1} g(\rho^{n-1}_{j})v^{n-1}_{j-1/2}\bigg) \\
		\qquad \qquad=\rho_j^{n}-\rho_j^{n-1}-\lambda \left((\rho^{n}_jg(\rho^n_{j+1})-\rho^{n-1}_jg(\rho^{n-1}_{j+1})) v^n_{j+\frac12}+\rho^{n-1}_jg(\rho^{n-1}_{j+1})(v^n_{j+\frac12}-v^{n-1}_{j+\frac12})\right.\\
		- \left.(\rho^{n}_{j-1}g(\rho^n_{j})-\rho^{n-1}_{j-1}g(\rho^{n-1}_{j})) v^n_{j-\frac12}-\rho^{n-1}_{j-1}g(\rho^{n-1}_{j})(v^n_{j-\frac12}-v^{n-1}_{j-\frac12}) \right), 
	\end{eqnarray*}
	{and using the mean-value Theorem, we take  $R_{j+1/2}^{n-1/2}\in(R_{j+1/2}^{n},R_{j+1/2}^{n-1})$ such that} 
	\begin{eqnarray*}
		v_{j+1/2}^n-v_{j+1/2}^{n-1}&=&v'(R_{j+1/2}^{n-1/2})(R_{j+1/2}^n-R_{j+1/2}^{n-1})\\
		&=&v'(R_{j+1/2}^{n-1/2})\Delta x \sum_{k=0}^{N-1}\omega_k(\rho_{j+k {+1}}^n-\rho_{j+k {+1}}^{n-1}).
	\end{eqnarray*}
	Next, we can  write
	\begin{eqnarray*}
		&&\rho_j^{n+1}-\rho_j^{n}\\
		&&= \left(1-\lambda \left(v^n_{j+\frac12} g(\rho^{n-1}_{j+1})-\rho^n_{j-1}g'(\xi^{n-\frac12}_{j-\frac12})v^n_{j-\frac12}-\Delta x \omega_{ {0}}\rho_{j-1}^{n-1} g(\rho^{n-1}_{j}) v'(R_{j-\frac12}^{n-\frac12})\right)\right)(\rho_j^{n}-\rho_j^{n-1})\\
		&&\quad-\lambda \left(\rho^n_j g'(\xi^{n-1/2}_{j+1/2}) v^n_{j+1/2}-v'\left(R^{n-1/2}_{j+1/2}\right)\rho^{n-1}_jg(\rho^{n-1}_{j+1})\Delta x \omega_{ {0}}\right) (\rho^n_{j+1}-\rho^{n-1}_{j+1})\\
		&&\quad-\lambda v'(R_{j+1/2}^{n-1/2}) \rho_j^{n-1} g(\rho^{n-1}_{j+1})\Delta x \sum_{k= {1}}^{N-1}\omega_k(\rho_{j+k}^n-\rho_{j+k}^{n-1})+\lambda v_{j-1/2}^n g(\rho^{n-1}_j)(\rho^{n}_{j-1}-\rho^{n-1}_{j-1})\\
		&&\quad+\lambda\rho_{j-1}^{n-1} g(\rho^{n-1}_j)v'(R_{j-1/2}^{n-1/2})\Delta x \sum_{k= {1}}^{N-1}\omega_k(\rho_{j+k}^n-\rho_{j+k}^{n-1}).
	\end{eqnarray*}
	and thanks to the CFL condition (\ref{eq:CFL_bv}), we have
	$$1-\lambda \left(v^n_{j+1/2} g(\rho^{n-1}_{j+1})-\rho^n_{j-1} g'(\xi^{n-1/2}_{j-1/2}) v^n_{j-1/2}-\Delta x \omega_{ {0}}\rho_{j-1}^{n-1} g(\rho^{n-1}_{j}) v'(R_{j-1/2}^{n-1/2})\right)\geq0.$$ 
	Then, taking the absolute value we obtain
	\begin{eqnarray*}
		&&|\rho_j^{n+1}-\rho_j^{n}|\\
		&&\leq \left(1-\lambda \left(v^n_{j+\frac12} g(\rho^{n-1}_{j+1})-\rho^n_{j-1} g'(\xi^{n-\frac12}_{j-\frac12}) v^n_{j-\frac12}-\Delta x \omega_{ {0}}\rho_{j-1}^{n-1} g(\rho^{n-1}_{j}) v'(R_{j-\frac12}^{n-\frac12})\right)\right)|\rho_j^{n}-\rho_j^{n-1}|\\
		&&\quad+\lambda \left(-\rho^n_j g'(\xi^{n-\frac12}_{j+\frac12}) v^n_{j+\frac12}-v'\left(R^{n-\frac12}_{j+\frac12}\right)\rho^{n-1}_jg(\rho^{n-1}_{j+1})\Delta x \omega_{ {0}}\right) |\rho^n_{j+1}-\rho^{n-1}_{j+1}|\\
		&&\quad-\lambda v'(R_{j+\frac12}^{n-\frac12}) \rho_j^{n-1} g(\rho^{n-1}_{j+1})\Delta x \sum_{k= {1}}^{N-1}\omega_k |\rho_{j+k}^n-\rho_{j+k}^{n-1}|+\lambda v_{j-\frac12}^n g(\rho^{n-1}_j)|\rho^{n}_{j-1}-\rho^{n-1}_{j-1}|\\
		&&\quad-\lambda\rho_{j-1}^{n-1} g(\rho^{n-1}_j)v'(R_{j-\frac12}^{n-\frac12})\Delta x \sum_{k= {1}}^{N-1}\omega_k |\rho_{j+k}^n-\rho_{j+k}^{n-1}|.
	\end{eqnarray*}
	Now, multiplying by $\Delta x$ and summing over $j,$ we get 
	\begin{eqnarray*}
		\sum_{j\in\Z}\Delta x|\rho_j^{n+1}-\rho_j^{n}|&\leq& \sum_{j\in\Z}\Delta x|\rho_j^{n}-\rho_j^{n-1}| \\
		&&-2\lambda \sum_{j\in\Z}\Delta x\rho_{j-1}^{n-1} g(\rho^{n-1}_j)v'(R_{j-1/2}^{n-1/2})\Delta x \sum_{k= {1}}^{N-1}\omega_k|\rho_{j+k}^n-\rho_{j+k}^{n-1}|\\
		&\leq &\sum_{j\in\Z}\Delta x|\rho_j^{n}-\rho_j^{n-1}| \\
		&&+2\Delta t \rho_{\max} \|g\|_{\Linf} \|v'\|_{\Linf}  \sum_{k= {1}}^{N-1}\omega_k  \sum_{j\in\Z}\Delta x |\rho_{j+k}^n-\rho_{j+k}^{n-1}|\\
		&\leq& \left(1+2\Delta t \rho_{\max} \|g\|_{\Linf} \|v'\|_{\Linf} \sum_{k=0}^{N-1}\omega_k\right) \sum_{j\in\Z}\Delta x |\rho_{j}^n-\rho_{j}^{n-1}|.
	\end{eqnarray*}
	Thus,
	$$\sum_{j\in\Z}\Delta x|\rho_j^{n+1}-\rho_j^{n}|\leq e^{({2n\Delta t \rho_{\max} \|g\|_{\Linf} \|v'\|_{\Linf}})}\sum_{j\in\Z}\Delta x|\rho_j^{1}-\rho_j^{0}|.$$
	On the other hand, 
	\begin{eqnarray*}
		\sum_{j\in\Z}\Delta x|\rho_j^{1}-\rho_j^{0}|&\leq &\Delta t\sum_{j<0}\left|\rho_j^{0} g(\rho_{j+1}^{0}) v_l(R^{0}_{j+1/2})-\rho_{j-1}^{0} g(\rho_{j}^{0}) v_l(R^{0}_{j-1/2})\right|\\
		&&+\Delta t
		\left|\rho_{-1}^{0} g(\rho_{0}^{0}) v_l(R^0_{-1/2})-\rho_{0}^{0} g(\rho_{1}^{0}) v_r(R^0_{1/2})\right|\\
		&&+\Delta t\sum_{j>0}\left|\rho_j^{0} g(\rho_{j+1}^{0}) v_r(R^0_{j+1/2})-\rho_{j-1}^{0} g(\rho_{j}^{0}) v_r(R^0_{j-1/2})\right|.
	\end{eqnarray*}
	The first term of the right-hand side can be estimated as 
	\begin{eqnarray*}
		&&\sum_{j<0}|\rho_j^{0} g(\rho^0_{j+1})v_l(R^0_{j+1/2})-\rho_{j-1}^{0} g(\rho^0_{j})v_l(R^0_{j-1/2})| \\ 
		&&\leq \norm{v_l}_{\Linf} \norm{g}_{\Linf}\sum_{j<0}|\rho_j^{0}-\rho_{j-1}^{0}|+\rho_{\max} \norm{g'}_{\Linf} \norm{v_l}_{\Linf} \sum_{j<0}|\rho^0_{j+1}-\rho^0_j|\\
		&&\quad +\rho_{\max} \norm{g}_{\Linf}\norm{v'_l}_{\Linf}\sum_{j<0}\modulo{R^n_{j+1/2}-R^n_{j-1/2}}\\
		&&\leq \norm{v_l}_{\Linf} \norm{g}_{\Linf}\sum_{j<0}|\rho_j^{0}-\rho_{j-1}^{0}|+\rho_{\max} \norm{g'}_{\Linf} \norm{v_l}_{\Linf} \sum_{j<0}|\rho^0_{j+1}-\rho^0_j|\\
		&&\quad+\rho_{\max}\norm{g}_{\Linf}
		\norm{v'_l}_{\Linf}\sum_{j<0}\sum_{k=0}^{N-1}\Delta x\omega_k\modulo{\rho_{j+k+1}^0-\rho_{j+k}^0}\\
		&&\leq\left(\norm{v_l}_{\Linf}\norm{g}_{\Linf}+\rho_{\max}\norm{g}_{\Linf}\norm{v'_l}_{\Linf}\norm{\omega}_{\mathbf{L}^1}+\rho_{\max} \norm{g'}_{\Linf}\norm{v_l}_{\Linf}\right) \hbox{TV}(\rho_0).
	\end{eqnarray*}
	Analogously,
	\begin{eqnarray*}
		&&\sum_{j>0}|\rho_j^{0} g(\rho_{j+1}^0)v_r(R^0_{j+1/2})-\rho_{j-1}^{0} g(\rho_{j}^0)v_r(R^0_{j-1/2})|\\
		&&\leq(\norm{v_r}_{\Linf} \norm{g}_{\Linf}+\rho_{\max}\norm{v'_r}_{\Linf} \norm{g}_{\Linf}\norm{\omega}_{\mathbf{L}^1}+\rho_{\max} \norm{g'}_{\Linf}\norm{v_r}_{\Linf})\hbox{TV}(\rho_0) ,
	\end{eqnarray*}
	and by hypothesis (\ref{hyp_v})
	\begin{eqnarray*}
		&&|\rho_{-1}^{0} g(\rho^0_0)v_l(R^0_{-1/2})-\rho_{0}^{0} g(\rho^0_1)v_r(R^0_{1/2})|\\
		&&\quad\leq  |\rho_{-1}^{0}g(\rho^0_0)v_l(R^0_{-1/2})\pm \rho_{-1}^{0}g(\rho^0_0)v_r(R^0_{1/2})-\rho_{0}^{0} g(\rho^0_1)v_r(R^0_{1/2})|\\
		&&\quad\leq \rho_{\max} \norm{g}_{\Linf} \modulo{v_l(R^0_{-1/2})-v_r(R^0_{1/2})} + v_r(R^0_{1/2}) \modulo{\rho_{-1}^{0}g(\rho^0_0)-\rho_{0}^{0}g(\rho^0_1)}\\
		&&\quad\leq \rho_{\max} \norm{g}_{\Linf} \modulo{v_l(R^0_{-1/2})\pm v_r(R^0_{-1/2}) -v_r(R^0_{1/2})}  + v_r(R^0_{1/2}) \modulo{\rho_{-1}^{0}g(\rho^0_0)-\rho_{0}^{0}g(\rho^0_1)}\\
		&&\quad\leq \rho_{\max} \norm{g}_{\Linf} \norm{\psi}_{\Linf} \modulo{k_l- k_r}+\rho_{\max} \norm{g}_{\Linf} \norm{v'_r} \norm{\omega}_{\mathbf{L}^1}\hbox{TV}(\rho_0) \\
		&&\quad\quad + \left(\norm{v_r}_{\Linf} \norm{g}_{\Linf}+\rho_{\max}\norm{v_r}_{\Linf} \norm{g'}_{\Linf} \right)\hbox{TV}(\rho_0).
	\end{eqnarray*}
	Finally,
	\begin{eqnarray*}
		\sum_{j\in\Z}\Delta x|\rho_j^{1}-\rho_j^{0}|&\leq& 3 \Delta t  \max_{s=l,r} \Bigl\{  \Bigl( \norm{\psi}_{\Linf} \norm{g}_{\Linf}+\rho_{\max}\norm{\psi'}_{\Linf}\norm{\omega}_{\mathbf{L}^1}\norm{g}_{\Linf} \Bigr. \Bigr.\\
		&&\Bigl. \Bigl. +\rho_{\max}  \norm{\psi}_{\infty} \norm{g'}\Bigr) k_s \Bigr\}\hbox{TV}(\rho_0)+\Delta t\rho_{\max}|k_r-k_l|\norm{\psi}_{\Linf} \norm{g}_{\Linf}.
	\end{eqnarray*}
	This completes the proof.
\end{proof}

\subsection{Spatial BV estimates.}
\begin{lem} \label{lem:spatialbv}
	Let $\rho_0\in \Linf \cap \mathbf{BV}(\R;[0,\rho_{\max}])$. Assume that the CFL condition \eqref{cfl} holds. For any interval $[a,b]\subset \R$ such that $0 \notin [a,b],$ fix $q>0$ such that $2q<\min \{\modulo{a},\modulo{b}\}$ and $q>\Delta x.$ Then, for any $n=1,...,N_T-1$ the following estimate holds:
	\begin{eqnarray*}
		\sum_{j\in\mathbf{J}_a^b}\modulo{\rho^{n}_{j+1}-\rho^{n}_{j}}\leq e^{2\mathcal{K}T}\left( \hbox{TV}(\rho_0)+2\frac{C(T)}{q} 
		+\mathcal{K}_2T\right),
	\end{eqnarray*}
	with $\mathbf{J}_a^b=\{j\in\Z:a\leq x_j\leq b\}.$
\end{lem}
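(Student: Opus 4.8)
The idea is to run the usual BV argument for monotone conservative schemes, but only on a region bounded away from the flux discontinuity at $x=0$, by multiplying against a cut-off whose gradient decays like $1/q$, and then to absorb the error this cut-off produces by means of the $\mathbf{L}^1$-in-time bound of Lemma~\ref{lem: continuity_in_time}. Since $0\notin[a,b]$, I may assume $0<a<b$ (the case $a<b<0$ is symmetric, with $v_l$ in place of $v_r$). Fix a Lipschitz function $\phi:\R\to[0,1]$ with $\phi\equiv1$ on $[a,b]$, $\supp\phi\subset[a-q,\,b+q+\eta]$ and $\|\phi'\|_{\Linf}\le1/q$; the hypothesis $2q<\min\{a,b\}$ guarantees $\supp\phi\subset(0,+\infty)$, and the plateau has been taken wide enough to also contain the look-ahead window $[b,b+\eta]$. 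Set $\phi_j:=\phi(x_j)$, so that $0\le\phi_j\le1$, $|\phi_{j+1}-\phi_j|\le\Delta x/q$ and $\sum_{j\in\Z}|\phi_{j+1}-\phi_j|\le2$, and define $\Phi^n:=\sum_{j\in\Z}\phi_j\,|\rho^{n}_{j+1}-\rho^{n}_{j}|$. Because $\phi$ vanishes on a neighbourhood of $0$, every numerical flux $F(x_{\ell\pm1/2},\cdot,\cdot,\cdot)$ carrying a nonzero weight $\phi_\ell$ below involves only the single velocity $v_r$, so the discontinuity of $f$ is never seen by $\Phi^n$: the problem reduces to a localized BV estimate for the non-local scheme with a fixed smooth velocity.

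\emph{Evolution of the discrete gradient.} Subtracting two consecutive instances of \eqref{scheme},
\[
\rho^{n+1}_{j+1}-\rho^{n+1}_{j}=\bigl(\rho^{n}_{j+1}-\rho^{n}_{j}\bigr)-\lambda\bigl(F^n_{j+3/2}-2F^n_{j+1/2}+F^n_{j-1/2}\bigr),
\]
and I expand each difference $F^n_{\ell+1/2}-F^n_{\ell-1/2}=\rho^n_\ell g(\rho^n_{\ell+1})v_r(R^n_{\ell+1/2})-\rho^n_{\ell-1}g(\rho^n_\ell)v_r(R^n_{\ell-1/2})$ exactly as in the proof of Lemma~\ref{lem: continuity_in_time}: adding and subtracting intermediate terms, applying the mean value theorem to $g$ and to $v_r$, and using $R^n_{\ell+1/2}-R^n_{\ell-1/2}=\Delta x\sum_{k=0}^{N-1}\omega_k(\rho^n_{\ell+k+1}-\rho^n_{\ell+k})$. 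This puts $\rho^{n+1}_{j+1}-\rho^{n+1}_{j}$ in the form
\[
a^n_j(\rho^n_{j+1}-\rho^n_j)+b^n_j(\rho^n_j-\rho^n_{j-1})+c^n_j(\rho^n_{j+2}-\rho^n_{j+1})+\lambda\sum_{k=1}^{N}d^n_{j,k}(\rho^n_{j+k+1}-\rho^n_{j+k}),
\]
with $|d^n_{j,k}|\le 2\rho_{\max}\|g\|_{\Linf}\|v_r'\|_{\Linf}\,\omega_k$; the CFL condition \eqref{cfl} makes the local coefficients $a^n_j,b^n_j,c^n_j\ge0$, while $a^n_j+b^n_j+c^n_j\le1+\mathcal K_1\Delta t$ for a constant $\mathcal K_1$ depending only on $\rho_{\max},\|g\|_{\Linf},\|g'\|_{\Linf},\|v_r\|_{\Linf},\|v_r'\|_{\Linf}$.

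\emph{Summation against the cut-off and Gronwall.} I take absolute values in the last identity, multiply by $\phi_j$, sum over $j\in\Z$, and shift the summation index in the $b$, $c$ and non-local terms. Replacing each shifted weight $\phi_{j\pm k}$ by $\phi_j$ costs only $|\phi_{j\pm k}-\phi_j|\le k\,\Delta x/q$ times a flux difference; the unshifted part reassembles into $(1+\mathcal K_1\Delta t)\Phi^n$ plus a non-local contribution which, since $\phi$ is nonzero on the whole look-ahead window and $\Delta x\sum_k\omega_k=\|\omega\|_{\mathbf{L}^1}$, is $\le 2\rho_{\max}\|g\|_{\Linf}\|v_r'\|_{\Linf}\|\omega\|_{\mathbf{L}^1}\lambda\,\Phi^n$; and the commutator part, since each such term carries a factor $\lambda$ and a flux difference and $\lambda\sum_j|F^n_{j+1/2}-F^n_{j-1/2}|=\sum_j|\rho^{n+1}_j-\rho^n_j|$, is bounded by $\tfrac{2}{q}\,\Delta x\sum_{j\in\Z}|\rho^{n+1}_j-\rho^n_j|$. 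This yields
\[
\Phi^{n+1}\ \le\ (1+2\mathcal K\Delta t)\,\Phi^n+\mathcal K_2\Delta t+\frac{2}{q}\,\Delta x\!\sum_{j\in\Z}\bigl|\rho^{n+1}_j-\rho^n_j\bigr|,
\]
where $\mathcal K$ collects $\mathcal K_1$ and $\rho_{\max}\|g\|_{\Linf}\|v_r'\|_{\Linf}\|\omega\|_{\mathbf{L}^1}$, and $\mathcal K_2\Delta t$ is the $|k_r-k_l|$-term already met in Lemma~\ref{lem: continuity_in_time}. Iterating this discrete Gronwall inequality from $n=0$, using $\Phi^0\le\text{TV}(\rho_0)$, $(1+2\mathcal K\Delta t)^{N}\le e^{2\mathcal K T}$, $\sum_{n<N}\mathcal K_2\Delta t\le\mathcal K_2T$, and bounding the accumulated commutator by $\tfrac2q C(T)$ via Lemma~\ref{lem: continuity_in_time}, I arrive at $\Phi^{n}\le e^{2\mathcal K T}\bigl(\text{TV}(\rho_0)+\tfrac{2C(T)}{q}+\mathcal K_2T\bigr)$; the claim follows since $\phi\equiv1$ on $[a,b]$ gives $\sum_{j\in\mathbf{J}_a^b}|\rho^n_{j+1}-\rho^n_j|\le\Phi^n$.

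\emph{Main obstacle.} The delicate point is the step ``summation against the cut-off'': one must check that, after the mean-value expansions, the genuinely local part of the update is a true convex combination (this is exactly where \eqref{cfl} enters), and then keep track simultaneously of two competing error sources — the non-local convolution differences, which have to be absorbed into the exponential Gronwall factor uniformly in how $\eta$ compares with $q$ (this is why the plateau of $\phi$ was enlarged to cover $[b,b+\eta]$), and the discrete gradient $\phi_{j+1}-\phi_j$ of the cut-off, whose contribution has to be rewritten as a discrete time increment so that Lemma~\ref{lem: continuity_in_time} can be applied; the precise summation in $n$ of this commutator, producing the term $2C(T)/q$, is the part that requires the most care. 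Everything else is the routine bookkeeping already carried out for Lemmas~\ref{bounds}--\ref{lem: continuity_in_time}.
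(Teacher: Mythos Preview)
Your overall strategy — localise via a cut-off $\phi$ supported away from $x=0$, derive a discrete Gronwall inequality for $\Phi^n=\sum_j\phi_j|\rho^n_{j+1}-\rho^n_j|$, and absorb the boundary error through Lemma~\ref{lem: continuity_in_time} — is natural, but the crucial ``commutator'' step does not go through as written.

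After the mean-value expansion you correctly obtain
\[
|\rho^{n+1}_{j+1}-\rho^{n+1}_{j}|\le a^n_j|\rho^n_{j+1}-\rho^n_j|+b^n_j|\rho^n_j-\rho^n_{j-1}|+c^n_j|\rho^n_{j+2}-\rho^n_{j+1}|+\lambda\sum_k |d^n_{j,k}|\,|\rho^n_{j+k+1}-\rho^n_{j+k}|,
\]
with $b^n_j,c^n_j=O(\lambda)$. When you multiply by $\phi_j$, sum, and shift the index in the $b$-term, the commutator is
\[
\sum_j(\phi_{j+1}-\phi_j)\,b^n_{j+1}\,|\rho^n_{j+1}-\rho^n_j|,
\]
which is a weighted sum of \emph{spatial density differences}, not of flux differences. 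Your assertion that ``each such term carries a factor $\lambda$ and a flux difference'' and can therefore be bounded via $\lambda\sum_j|F^n_{j+1/2}-F^n_{j-1/2}|=\sum_j|\rho^{n+1}_j-\rho^n_j|$ is not justified: there is no inequality of the form $b^n_{j+1}|\rho^n_{j+1}-\rho^n_j|\le\lambda|F^n_{\ell+1/2}-F^n_{\ell-1/2}|$ available after the expansion. If instead you bound crudely, $|\phi_{j+1}-\phi_j|\le\Delta x/q$, $b^n_{j+1}\le C\lambda$, $|\rho^n_{j+1}-\rho^n_j|\le\rho_{\max}$, and there are $\sim 2q/\Delta x$ nonzero terms, the commutator is $O(\lambda)=O(1)$ \emph{per time step}, hence $O(T/\Delta t)\to\infty$ after iteration. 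This is precisely the obstruction that forces a different handling of the boundary.

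The paper circumvents this by a pigeonhole argument rather than a smooth cut-off: from the global time-continuity bound $\Delta x\sum_n\sum_j|\rho^{n+1}_j-\rho^n_j|\le T\,C(T)$ one selects \emph{specific} indices $j_a$ near $a$ and $j_b+1$ near $b$ for which $\sum_n|\rho^{n+1}_{j_a}-\rho^n_{j_a}|$ and $\sum_n|\rho^{n+1}_{j_b+1}-\rho^n_{j_b+1}|$ are each $\le C(T)/q$. One then runs the BV recursion on the \emph{sharp} window $[j_a,j_b]$; the only extra terms produced at each step are exactly $|\rho^{n+1}_{j_a}-\rho^n_{j_a}|$ and $|\rho^{n+1}_{j_b+1}-\rho^n_{j_b+1}|$, which are genuine time increments and sum to $2C(T)/q$. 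No commutator involving uncontrolled spatial TV ever appears. Your cut-off argument would become correct if you averaged the sharp-window estimate over all admissible $j_a,j_b$, but that is essentially the pigeonhole in disguise.

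A smaller point: the constant $\mathcal K_2$ in the statement is not the $|k_r-k_l|$ contribution from Lemma~\ref{lem: continuity_in_time}; since your window avoids $x=0$ no such term can arise. In the paper it comes from the second-order analysis of the non-local increments $R^n_{j+3/2}-2R^n_{j+1/2}+R^n_{j-1/2}$ and $\tilde R^n_{j+1}-\tilde R^n_j$ in the estimate of $\mathcal B^n_j$, which your sketch does not carry out.
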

\begin{proof}
	Let 
	\begin{eqnarray*}
		\quad  \mathcal{M}_\Delta=\left\{j\in\Z: x_{j-1/2}\in[a-q-\Delta x,a]\right\},\, \mathcal{N}_\Delta=\left\{j\in\Z: x_{j+1/2}\in[b,b+q+\Delta x]\right\}.
	\end{eqnarray*}
	By the assumptions on $q,$ observe that there are at least 2 elements in each of the sets above, i.e. $\modulo{\mathcal{M}_\Delta},\,\modulo{\mathcal{N}_\Delta}\geq 2.$ Moreover, $\modulo{\mathcal{M}_\Delta}\Delta x\geq q$ and $\modulo{\mathcal{N}_\Delta}\Delta x\geq q.$ 
	By Lemma \ref{lem: continuity_in_time} there exists a constant $C(T)$ such that 
	\begin{equation}
	\Delta x \sum_{n=0}^{N_T-1}\sum_{j\in\Z}|\rho_j^{n+1}-\rho_j^{n}|\leq TC(T),
	\end{equation}
	with $C(T)$ as in Lemma \ref{lem: continuity_in_time}, then when restricting the sum over $j$ in the set $\mathcal{M}_\Delta,$ respectively $\mathcal{N}_\Delta,$ it follows that 
	\begin{equation}
	\Delta x \sum_{n=0}^{N_T-1}\sum_{j\in \mathcal{M}_\Delta}|\rho_j^{n+1}-\rho_j^{n}|\leq  TC(T) \quad\hbox{and}\quad \Delta x \sum_{n=0}^{N_T-1}\sum_{j\in \mathcal{N}_\Delta}|\rho_j^{n+1}-\rho_j^{n}|\leq  TC(T).
	\end{equation}
	Let us choose $j_a\in \mathcal{M}_\Delta$ and $j_b$ with $j_b+1\in \mathcal{N}_\Delta$ such that 
	\begin{eqnarray*}
		\sum_{n=0}^{N_T-1}|\rho_{j_a}^{n+1}-\rho_{j_a}^{n}|&=& \min_{j\in \mathcal{M}_\Delta}\sum_{n=0}^{N_T-1}|\rho_j^{n+1}-\rho_j^{n}|,\\
		\sum_{n=0}^{N_T-1}|\rho_{j_{b+1}}^{n+1}-\rho_{j_{b+1}}^{n}|&=& \min_{j\in \mathcal{N}_\Delta}\sum_{n=0}^{N_T-1}|\rho_j^{n+1}-\rho_j^{n}|, 
	\end{eqnarray*}
	thus, 
	\begin{eqnarray*}
		\sum_{n=0}^{N_T-1}|\rho_{j_a}^{n+1}-\rho_{j_a}^{n}|&\leq& \frac{C}{\modulo{\mathcal{M}_\Delta}\Delta x}\leq \frac{TC(T)}{q},\\
		\sum_{n=0}^{N_T-1}|\rho_{j_{b+1}}^{n+1}-\rho_{j_{b+1}}^{n}|&\leq& \frac{C}{\modulo{\mathcal{N}_\Delta}\Delta x}\leq \frac{TC(T)}{q}, 
	\end{eqnarray*}
	moreover, observe that
	\begin{equation}\label{eq:tv_sum}
	\sum_{j=j_a}^{j_b}\modulo{\rho^{n+1}_{j+1}-\rho^{n+1}_{j}}=\modulo{\rho^{n+1}_{j_a+1}-\rho^{n+1}_{j_a}}+\sum_{j=j_a+1}^{j_b-1}\modulo{\rho^{n+1}_{j+1}-\rho^{n+1}_{j}}+\modulo{\rho^{n+1}_{j_b+1}-\rho^{n+1}_{j_b}}.
	\end{equation}
	Now, let us focus on the central sum on the right-hand side of (\ref{eq:tv_sum}), we write 
	$$\rho_{j+1}^{n+1}-\rho_{j}^{n+1}=\mathcal{A}^n_j-\lambda \mathcal{B}^n_j,$$
	with 
	\begin{eqnarray*}
		\mathcal{A}^n_j&:=&(1-\lambda v^n_{j+3/2} g(\rho^n_{j+2})+\lambda \rho^n_{j-1} g'(\xi^n_{j+1/2}) v^n_{j+1/2})(\rho_{j+1}^n-\rho_j^n)\\
		&&+\lambda v^n_{j+1/2} g(\rho^n_{j+1})(\rho_{j}^n-\rho_{j-1}^n)-\lambda \rho^n_{j+1} g'(\xi^n_{j+3/2}) v^n_{j+3/2} (\rho^n_{j+2}-\rho^n_{j+1}) ,\\
		\mathcal{B}^n_j&:=&\rho_{j}^n g(\rho^n_{j+1})(v^n_{j+3/2}-v^n_{j+1/2})- \rho_{j-1}^n g(\rho^n_{j})(v^n_{j+1/2}-v^n_{j-1/2}).
	\end{eqnarray*}
	Taking the absolute value and summing, 
	\begin{eqnarray*}
		\sum_{j=j_a+1}^{j_b-1}|\mathcal{A}^n_j|&\leq& \sum_{j=j_a+1}^{j_b-1}|\rho_{j+1}^n-\rho_j^n|+\lambda v^n_{j_a+3/2} g(\rho^n_{j_a+2})|\rho_{j_a+1}^n-\rho_{j_a}^n|  \\
		&&-\lambda v^n_{j_b+1/2} g(\rho^n_{j_b+1}) |\rho_{j_b}^n-\rho_{j_b-1}^n|+\lambda g'(\xi^n_{j_a+3/2}) \rho^n_{j_a+1}v^n_{j_a+3/2} \modulo{\rho^n_{j_a+2}-\rho^n_{j_a+1}}\\
		&&-\lambda g'(\xi^n_{j_b+1/2}) \rho^n_{j_b-1}v^n_{j_b+1/2} \modulo{\rho^n_{j_b+1}-\rho^n_{j_b}}.
	\end{eqnarray*}
	On the other hand, 
	\begin{eqnarray*}
		\quad \mathcal{B}^n_j&=& \rho^n_j g(\rho^n_{j+1}) (v^n_{j+3/2}-v^n_{j+1/2})-\rho^n_{j-1} g(\rho^n_{j})(v^n_{j+1/2}-v^n_{j-1/2})\\
		\quad  &=&\rho^n_j g(\rho^n_{j+1}) v'(\tilde R^n_{j+1}) \left(R^n_{j+3/2}-R^n_{j+1/2}\right)-\rho^n_{j-1} g(\rho^n_{j}) v'(\tilde R^n_{j}) \left(R^n_{j+1/2}-R^n_{j-1/2}\right)\\
		&=&\rho^n_j g(\rho^n_{j+1}) v'(\tilde R^n_{j+1}) \left(R^n_{j+3/2}-R^n_{j+1/2}\right)\pm \rho^n_{j-1} g(\rho^n_{j})v'(\tilde R^n_{j+1})\left(R^n_{j+3/2}-R^n_{j+1/2}\right)\\
		&&-\rho^n_{j-1} g(\rho^n_{j}) v'(\tilde R^n_{j}) \left(R^n_{j+1/2}-R^n_{j-1/2}\right)\\
		&=&\rho^n_j g'(\xi^n_{j+1/2})(\rho^n_{j+1}-\rho^n_j) v'(\tilde R^n_{j+1})\left(R^n_{j+3/2}-R^n_{j+1/2}\right)\\
		&&+g(\rho^n_j)(\rho^n_j-\rho^n_{j-1}) v'(\tilde R^n_{j+1})\left(R^n_{j+3/2}-R^n_{j+1/2}\right)\\
		&&+ \rho^n_{j-1} g(\rho^n_{j}) v'(\tilde R^n_{j+1})\left(R^n_{j+3/2}-2 R^n_{j+1/2}+R^n_{j-1/2}\right)\\
		&&+\rho^n_{j-1} g(\rho^n_{j}) v''(\bar R^n_{j+1/2})\left(R^n_{j+1/2}-R^n_{j-1/2}\right)\left(\tilde R^n_{j+1}-\tilde R^n_{j}\right),
	\end{eqnarray*}
	where $\tilde{R}^n_j\in \mathcal{I}(R^n_{j-1/2},R^n_{j+1/2})$ and $\bar{R}^n_{j+1/2}\in \mathcal{I}(\tilde R^n_{j},\tilde R^n_{j+1}).$ Now, by the assumptions (\ref{hyp_v}) on the kernel function and defining $\omega_N:=0$, we get
	\begin{eqnarray*}
		\modulo{R^n_{j+1/2}-R^n_{j-1/2}}&=&\modulo{\Delta x\sum_{k=0}^{N-1}\omega_k (\rho_{j+k+1}^n-\rho_{j+k}^n)} \\
		&=&\Delta x \left|-\omega_0 \rho_j^n +\sum_{k=1}^{N-1}(\omega_{k-1}-\omega_{k})\rho_{j+k}^n +\omega_{N-1} \rho_{j+N}^n\right|\\
		&\leq& \Delta x\left( {2}\omega_\eta(0) \rho_{\max}+ \norm{\omega'}_{\mathbf{L}^\infty}\norm{\rho}_{\mathbf{L}^1}\right),
	\end{eqnarray*}
	and 
	\begin{eqnarray*}
		|R^n_{j+3/2}-2 R^n_{j+1/2}+R^n_{j-1/2}|&=&\left|\Delta x\left( \sum_{k=0}^{N-1}\omega_k \rho_{j+k+2}^n-2\sum_{k=0}^{N-1}\omega_k \rho_{j+k+1}^n+\sum_{k=0}^{N-1}\omega_k \rho_{j+k}^n\right)\right|\\
		&=&\left|\Delta x\left(\sum_{k=1}^{N-1}(\omega_{k-1}-2\omega_{k}+\omega_{k+1})\rho_{j+k+1}^n+\Delta x \rho^n_{j+1}\frac{\omega_1-\omega_0}{\Delta x}\right.\right.\\
		&&+\left.\left. \frac{1}{\Delta x}(\omega_{N-1}\underbrace{-\omega_N }_{:=0})\rho^n_{j+N+1} \Delta x +\omega_0(\rho^n_{j}-\rho^n_{j+1})\right)\right| \\
		&\leq& \left(\Delta x\right)^2\norm{\omega''}_{\Linf}\norm{\rho}_{\mathbf{L}^1}+2(\Delta x)^2 \rho_{\max} \norm{\omega'}_{\Linf}\\
		&&+\Delta x \omega_0 |\rho_{j}-\rho_{j+1}|.
	\end{eqnarray*}
	Now, we compute,
	$\modulo{\tilde R^n_{j+1}-\tilde R^n_{j}},$
	\begin{eqnarray*}
		\modulo{\tilde{R}^n_{j+1}-\tilde{R}^n_j}&=&\modulo{\theta R^n_{j+3/2}+(1-\theta)R^n_{j+1/2}-\mu R^n_{j+1/2}-(1-\mu)R^n_{j-1/2}} \\
		&\leq& 3 \Delta x \left( {2\omega_{\eta}(0)\rho_{\max}}+\|\omega'\|_{\Linf}\|\rho\|_{\mathbf{L}^1}\right),
	\end{eqnarray*}
	for some $\theta, \mu\in[0,1].$
	We end up with 
	\begin{eqnarray*}
		\modulo{\mathcal{B}^n_j}&\leq& \Delta x ( {2}\omega_\eta(0)\rho_{\max}+\|\omega'\|_{\Linf}\|\rho\|_{\mathbf{L}^1}) \norm{g}_{\Linf} \norm{v'}_{\Linf} \modulo{\rho^n_j-\rho^n_{j-1}}\\
		&&+  (\Delta x)^2 \norm{g}_{\Linf}\left(\norm{\omega''}_{\Linf} \norm{\rho}_{\mathbf{L}^1}\norm{v'}_{\Linf}+2  \rho_{\max} \norm{\omega'}_{\mathbf{L}^\infty}\norm{v'}_{\Linf} \right) \modulo{\rho^n_{j-1}}\\
		&&+\Delta x\rho_{\max}\bigg(\omega_\eta(0) \norm{g}_{\Linf} \norm{v'}_{\Linf}\\
		&&+\norm{g'}_{\Linf}\norm{v'}_{\Linf} ( {2}\omega_\eta(0)\rho_{\max}+\norm{\omega'_\eta}_{\Linf}\norm{\rho}_{\mathbf{L}^1})\bigg) \modulo{\rho^n_{j+1}-\rho^n_j}\\
		&&+3 (\Delta x)^2\left(  \|\omega'\|_{\Linf}\|\rho\|_{\mathbf{L}^1}+ {2}\omega_\eta(0) \rho_{\max} \right)^2 \norm{g}_{\Linf}\norm{v''}_{\Linf}  \modulo{\rho^n_{j-1}}.
	\end{eqnarray*}
	Summing, 
	\begin{eqnarray*}
		\lambda\sum_{j=j_a+1}^{j_b-1}|\mathcal{B}^n_j|
		\leq\Delta t \mathcal{K} \sum_{j=j_a+1}^{j_b-1}\modulo{\rho^n_j-\rho^n_{j-1}}+\Delta t \Delta x \mathcal{K}_1\sum_{j=j_a+1}^{j_b-1}\modulo{\rho^n_{j-1}}+\Delta t \mathcal{K}_2 \sum_{j=j_a+1}^{j_b-1}\modulo{\rho^n_{j+1}-\rho^n_{j}},
	\end{eqnarray*}
	where 
	\begin{eqnarray*}
		\mathcal{K}&=&( {2}\omega_\eta(0)\rho_{\max}+\|\omega'\|_{\Linf}\|\rho\|_{\mathbf{L}^1}) \norm{g}_{\Linf} \norm{v'}_{\Linf},\\
		\mathcal{K}_1&=& \norm{g}_{\Linf}\left(\norm{\omega''}_{\Linf} \norm{\rho}_{\mathbf{L}^1}\norm{v'}_{\Linf}+2  \rho_{\max} \norm{\omega'}_{\mathbf{L}^\infty}\norm{v'}_{\Linf} \right)\\
		&&+3   \left(  \|\omega'\|_{\Linf}\|\rho\|_{\mathbf{L}^1}+ {2}\omega_\eta(0) \rho_{\max} \right)^2 \norm{v''}_{\Linf} \norm{g}_{\Linf},\\
		\mathcal{K}_2&=& \norm{v'}_{\Linf} \rho_{\max}\left(\omega_\eta(0) \norm{g}_{\Linf} +\norm{g'}_{\Linf} ( {2}\omega_\eta(0)\rho_{\max}+\norm{\omega_\eta}_{\Linf}\norm{\rho}_{\mathbf{L}^1})\right).
	\end{eqnarray*}
	We are left with the boundary terms in (\ref{eq:tv_sum}), for $j=j_a,$ we have
	\begin{eqnarray*}
		\rho^{n+1}_{j_a+1}-\rho^{n+1}_{j_a}&=&\rho^n_{j_a+1}-\lambda \left(\rho^n_{j_a+1}g(\rho^n_{j_a+2})v^n_{j_a+3/2}-\rho^n_{j_a}g(\rho^n_{j_a+1})v^n_{j_a+1/2}\right)-\rho^{n+1}_{j_a}\pm \rho^n_{j_a}\\
		&=& (1-\lambda v^n_{j+3/2} g(\rho^n_{j_a+1}))(\rho^n_{j_a+1}-\rho^n_{j_a})-\lambda\rho^n_{j_a}g(\rho^n_{j_a+1})(v^n_{j_a+3/2}-v^n_{j_a+1/2})\\
		&\quad&-\lambda v_{j_a+3/2} \rho^n_{j_a+1}g'(\xi^n_{j_a+3/2})\left(\rho^n_{j_a+2}-\rho^n_{j_a+1}\right)+\rho^n_{j_a}-\rho^{n+1}_{j_a}.
	\end{eqnarray*}
	and  similarly for $j=j_b$
	\begin{eqnarray*}
		\rho^{n+1}_{j_b+1}-\rho^{n+1}_{j_b}&=&\rho^{n+1}_{j_b+1}-\rho^n_{j_b}+\lambda \left(\rho^n_{j_b}g(\rho^n_{j_b+1})v^n_{j_b+1/2}-\rho^n_{j_b-1}g(\rho^n_{j_b})v^n_{j_b-1/2}\right)\pm \rho^n_{j_b+1}\\
		&=&\rho^{n+1}_{j_b+1}-\rho^n_{j_b+1}+(\rho^{n}_{j_b+1}-\rho^n_{j_b})(1+\lambda v^n_{j_b+1/2}\rho^n_{j_b-1}g'(\xi^n_{j_b+1/2}))\\
		&&+\lambda v^n_{j_b+1/2} g(\rho^n_{j_b+1})(\rho^n_{j_b}-\rho^n_{j_b-1})+\lambda \rho^n_{j_b-1}g(\rho^n_{j_b})(v^n_{j_b+1/2}-v^n_{j_b-1/2}).
	\end{eqnarray*}
	Next, Collecting the terms, taking the absolute value and summing over $j$ 
	\begin{eqnarray*}
		&& \sum_{j=j_a}^{j_b}\modulo{\rho^{n+1}_{j+1}-\rho^{n+1}_{j}}\\
		&\leq&\modulo{\rho^{n+1}_{j_a+1}-\rho^{n+1}_{j_a}}+\sum_{j=j_a+1}^{j_b-1}(|\mathcal{A}^n_j|+\lambda \modulo{\mathcal{B}^n_j})+\modulo{\rho^{n+1}_{j_b+1}-\rho^{n+1}_{j_b}}\\
		&\leq& (1-\lambda v^n_{j_a+3/2} g(\rho^n_{j_a+2}))\modulo{\rho^n_{j_a+1}-\rho^n_{j_a}}
		+\lambda\rho^n_{j_a} g(\rho^n_{j_a+1})\modulo{v^n_{j_a+3/2}-v^n_{j_a+1/2}}\\
		&&+\modulo{\rho^n_{j_a}-\rho^{n+1}_{j_a}}-\lambda v^n_{j_a+3/2}\rho^n_{j_a+1}g'(\xi^n_{j_a+3/2})\modulo{\rho^n_{j_a+2}-\rho^n_{j_a+1}}+\sum_{j=j_a+1}^{j_b-1}|\rho_{j+1}^n-\rho_j^n|\\
		&&+\lambda v^n_{j_a+3/2} g(\rho^n_{j_a+2})|\rho_{j_a+1}^n-\rho_{j_a}^n|-\lambda v^n_{j_b+1/2} g(\rho^n_{j_b+1}) |\rho_{j_b}^n-\rho_{j_b-1}^n|\\
		&&+\lambda g'(\xi^n_{j_a+3/2}) \rho^n_{j_a+1}v^n_{j_a+3/2} \modulo{\rho^n_{j_a+2}-\rho^n_{j_a+1}}-\lambda g'(\xi^n_{j_b+1/2}) \rho^n_{j_b-1}v^n_{j_b+1/2} \modulo{\rho^n_{j_b+1}-\rho^n_{j_b}}\\
		&&+\Delta t \mathcal{K} \sum_{j=j_a+1}^{j_b-1}\modulo{\rho^n_j-\rho^n_{j-1}}+\Delta t \Delta x \mathcal{K}_1\sum_{j=j_a+1}^{j_b-1}\modulo{\rho^n_{j-1}}+\Delta t \mathcal{K}_2 \sum_{j=j_a+1}^{j_b-1}\modulo{\rho^n_{j+1}-\rho^n_{j}}\\
		&&+\modulo{\rho^{n+1}_{j_b+1}-\rho^n_{j_b+1}}+\modulo{\rho^{n}_{j_b+1}-\rho^n_{j_b}}(1 +\lambda v^n_{j_b+1/2}\rho^n_{j_b-1}g'(\xi^n_{j_b+1/2}))\\
		&&+\lambda v^n_{j_b+1/2} g(\rho^n_{j_b})\modulo{\rho^n_{j_b}-\rho^n_{j_b-1}}+\lambda \rho^n_{j_b-1}g(\rho^n_{j_b})\modulo{v^n_{j_b+1/2}-v^n_{j_b-1/2}}\\
		&=&\modulo{\rho^n_{j_a}-\rho^{n+1}_{j_a}}+\sum_{j=j_a}^{j_b}|\rho_{j+1}^n-\rho_j^n|+\lambda\rho^n_{j_a} g(\rho^n_{j_a+1})\modulo{v^n_{j_a+3/2}-v^n_{j_a+1/2}} +\modulo{\rho^{n+1}_{j_b+1}-\rho^n_{j_b+1}}\\
		&&+\Delta t \mathcal{K} \sum_{j=j_a+1}^{j_b-1}\modulo{\rho^n_j-\rho^n_{j-1}}+\Delta t \Delta x \mathcal{K}_1\sum_{j=j_a+1}^{j_b-1}\modulo{\rho^n_{j-1}}+\Delta t \mathcal{K}_2 \sum_{j=j_a+1}^{j_b-1}\modulo{\rho^n_{j+1}-\rho^n_{j}}\\
		&&+\lambda \rho^n_{j_b-1}g(\rho^n_{j_b})\modulo{v^n_{j_b+1/2}-v^n_{j_b-1/2}}\\
		&\leq& \modulo{\rho^n_{j_a}-\rho^{n+1}_{j_a}}+(1+2\Delta t \mathcal{K})\sum_{j=j_a}^{j_b}|\rho_{j+1}^n-\rho_j^n|+\modulo{\rho^{n+1}_{j_b+1}-\rho^n_{j_b+1}}+\Delta t \mathcal{K}_3,
	\end{eqnarray*}
	where $\mathcal{K}_3=2 \rho_{\max} \mathcal{K}+\norm{\rho}_{\mathbf{L}^1}\mathcal{K}_1$. By a standard iterative procedure we can deduce, for $1\leq n<N_T-1,$
	\begin{eqnarray*}
		\sum_{j=j_a}^{j_b}\modulo{\rho^{n+1}_{j+1}-\rho^{n+1}_{j}}\leq e^{2\mathcal{K}T}\left( \sum_{j=j_a}^{j_b}|\rho_{j+1}^0-\rho_j^0|+2\frac{C(T)}{q} 
		+\mathcal{K}_3T\right).
	\end{eqnarray*}
	This concludes the proof because $[a,b]\subseteq [x_{j_a}, x_{j_b+1}].$
\end{proof}

\subsection{Discrete Entropy Inequality.}

Next we show that the approximate solution obtained by the scheme \eqref{scheme} fulfills a discrete entropy inequality. Let us define 
$$G_{j+1/2}(u)=u g(u) v_{j+1/2}^n,\qquad \mathcal{F}^{c}_{j+1/2}(u):=G_{j+1/2}(u\vee c)-G_{j+1/2}(u\wedge c)$$
with $a\vee b=\max\{a,b\}$ and $a\wedge b=\min\{a,b\}$.

\begin{lem}
	Let $\rho_j^n$ for $j\in\Z$ and $n\in\N$ given by (\ref{scheme}), and let the CFL condition (\ref{cfl})  and the hypothesis (\ref{hyp_v}) hold. Then  we have 
	\begin{eqnarray}\label{discrentropyinequality}
	&&\modulo{\rho_j^{n+1}-c}-\modulo{\rho_j^{n}-c}\nonumber\\
	&&+\lambda (\mathcal{F}^{c}_{j+1/2}(\rho_j^n)-\mathcal{F}^{c}_{j-1/2}(\rho_{j-1}^n))+\lambda\sign{\rho_j^{n+1}-c} c g(c)(v_{j+1/2}^n-v_{j-1/2}^n)\leq0
	\end{eqnarray}
	for all $j\in\Z$, $n\in\N$ and $c\in[0,\rho_{\max}]$.
\end{lem}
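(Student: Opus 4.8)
The plan is to run the classical monotone-scheme argument (in the spirit of Crandall--Majda / Karlsen--Risebro--Towers), with the single modification forced by the spatial discontinuity of the flux: the constant state $c$ is no longer left invariant by one step of the scheme, and the resulting defect is precisely the extra boundary term appearing in \eqref{discrentropyinequality}. I first rewrite \eqref{scheme}--\eqref{numflux} as $\rho_j^{n+1}=\mathcal H_j(\rho_{j-1}^n,\rho_j^n,\rho_{j+1}^n)$ with
\[
\mathcal H_j(p,q,r):=q-\lambda\left(q\,g(r)\,v^n_{j+1/2}-p\,g(q)\,v^n_{j-1/2}\right),
\]
where the interface velocities $v^n_{j\pm1/2}$ --- which encode both the side of $x=0$ and the (frozen) convolution values $R^n_{j\pm1/2}$ --- are treated as nonnegative coefficients. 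The first step is to show that $\mathcal H_j$ is nondecreasing in each of its three arguments on $[0,\rho_{\max}]^3$ under the CFL condition \eqref{cfl}: monotonicity in $p$ is immediate from $g\ge0$ and $v^n_{j-1/2}\ge0$; monotonicity in $r$ follows from $g'\le0$, $p\ge0$ and $v^n_{j+1/2}\ge0$; and monotonicity in $q$ reduces to $\partial_q\mathcal H_j=1-\lambda g(r)v^n_{j+1/2}+\lambda p\,g'(q)v^n_{j-1/2}\ge0$, obtained from $\norm{g}_{\Linf}$, $\norm{g'}_{\Linf}$, $\norm{\psi}_{\Linf}$ and \eqref{cfl} exactly as in the proof of Lemma \ref{bounds}.

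Since $v^n_{j+1/2}\ne v^n_{j-1/2}$ in general, $c$ is not a fixed point of $\mathcal H_j$; instead $\mathcal H_j(c,c,c)=c-\lambda\,c\,g(c)(v^n_{j+1/2}-v^n_{j-1/2})=:\bar c_j$. I then use the elementary monotonicity estimate $\modulo{\mathcal H_j(P)-\mathcal H_j(Q)}\le \mathcal H_j(P\vee Q)-\mathcal H_j(P\wedge Q)$, valid for any nondecreasing $\mathcal H_j$ with $\vee,\wedge$ taken componentwise, with $P=(\rho_{j-1}^n,\rho_j^n,\rho_{j+1}^n)$ and $Q=(c,c,c)$ --- both in $[0,\rho_{\max}]^3$ by Lemma \ref{bounds}. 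This gives $\modulo{\rho_j^{n+1}-\bar c_j}\le \mathcal H_j(P\vee Q)-\mathcal H_j(P\wedge Q)$. Expanding the right-hand side, the cell-average contribution collapses to $(\rho_j^n\vee c)-(\rho_j^n\wedge c)=\abs{\rho_j^n-c}$, while the two flux contributions reorganise, using the definitions of $G_{j\pm1/2}$ and $\mathcal F^{c}_{j\pm1/2}$, into the telescoping difference $-\lambda\left(\mathcal F^{c}_{j+1/2}(\rho_j^n)-\mathcal F^{c}_{j-1/2}(\rho_{j-1}^n)\right)$; hence
\[
\modulo{\rho_j^{n+1}-\bar c_j}\le\abs{\rho_j^n-c}-\lambda\left(\mathcal F^{c}_{j+1/2}(\rho_j^n)-\mathcal F^{c}_{j-1/2}(\rho_{j-1}^n)\right).
\]

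The last step is to pass from $\bar c_j$ back to $c$. For this I use the pointwise inequality $\abs{a-c}-\abs{a-\bar c_j}\le \sgn(a-c)(\bar c_j-c)$, which holds for all real $a,c,\bar c_j$ (check the four sign configurations of $a-c$ and $a-\bar c_j$). With $a=\rho_j^{n+1}$ and $\bar c_j-c=-\lambda\,c\,g(c)(v^n_{j+1/2}-v^n_{j-1/2})$ this reads
\[
\abs{\rho_j^{n+1}-c}\le \modulo{\rho_j^{n+1}-\bar c_j}-\lambda\,\sgn(\rho_j^{n+1}-c)\,c\,g(c)(v^n_{j+1/2}-v^n_{j-1/2}),
\]
and inserting the estimate of the previous paragraph yields exactly \eqref{discrentropyinequality}.

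The routine parts are the monotonicity check and the recombination of the flux terms; the one point requiring care is precisely that recombination, namely tracking how the upwind product $\rho_j\,g(\rho_{j+1})$ in \eqref{numflux} interacts with the maps $u\mapsto u\vee c$ and $u\mapsto u\wedge c$, so that $\mathcal H_j(P\vee Q)-\mathcal H_j(P\wedge Q)$ produces exactly the telescoping entropy flux $\mathcal F^{c}_{j\pm1/2}$ of the statement (a short consistency argument, using $g'\le0$ together with the ordering relations for $\vee c,\wedge c$, may be needed here). I also want to emphasise that it is the \emph{sharp} inequality $\abs{a-c}-\abs{a-\bar c_j}\le\sgn(a-c)(\bar c_j-c)$ --- and not the cruder triangle inequality $\abs{a-c}\le\abs{a-\bar c_j}+\abs{\bar c_j-c}$ --- that produces the \emph{signed} correction $\sgn(\rho_j^{n+1}-c)\,c\,g(c)(v^n_{j+1/2}-v^n_{j-1/2})$ in \eqref{discrentropyinequality}; in the passage to the limit $\Delta x\to0$ this term concentrates at $x=0$, where $v^n_{j+1/2}-v^n_{j-1/2}\to(k_r-k_l)\,\psi(\rho\ast\omega_\eta)$, and it is dominated by the interface term $\modulo{(k_r-k_l)\,c\,g(c)\,\psi(\rho\ast\omega_\eta)}\,\varphi(t,0)$ in Definition \ref{def:solution}.
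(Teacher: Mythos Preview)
Your strategy is exactly the one the paper defers to (the Crandall--Majda argument carried out in \cite{2018Gottlich}), and the treatment of the defect $\bar c_j=\mathcal H_j(c,c,c)$ together with the sharp inequality $\abs{a-c}-\abs{a-\bar c_j}\le\sgn(a-c)(\bar c_j-c)$ is precisely how the signed interface term is produced. So the overall architecture is correct.

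There is, however, one point on which you hedge and which does \emph{not} work as written. When you expand $\mathcal H_j(P\vee Q)-\mathcal H_j(P\wedge Q)$, the flux contribution at $j+\tfrac12$ is
\[
v^n_{j+1/2}\Bigl[(\rho_j^n\vee c)\,g(\rho_{j+1}^n\vee c)-(\rho_j^n\wedge c)\,g(\rho_{j+1}^n\wedge c)\Bigr],
\]
i.e.\ the \emph{two-argument} numerical entropy flux $F_{j+1/2}(\rho_j^n\vee c,\rho_{j+1}^n\vee c)-F_{j+1/2}(\rho_j^n\wedge c,\rho_{j+1}^n\wedge c)$ built from the upwind flux $F_{j+1/2}(u,w)=u\,g(w)\,v^n_{j+1/2}$, and \emph{not} the diagonal object $\mathcal F^{c}_{j+1/2}(\rho_j^n)=G_{j+1/2}(\rho_j^n\vee c)-G_{j+1/2}(\rho_j^n\wedge c)$ that appears in the statement. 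No ``short consistency argument using $g'\le0$'' will collapse the former to the latter cellwise (try $g(u)=1-u$, $c=\tfrac12$, $(\rho_{j-1},\rho_j,\rho_{j+1})=(0.1,0.5,0.9)$: the two flux differences have opposite signs). What your computation actually proves is \eqref{discrentropyinequality} with $\mathcal F^{c}_{j\pm1/2}$ replaced by this two-argument entropy flux; that is the form proved in \cite{2018Gottlich}, it is consistent with $G_{j+1/2}$ since $F_{j+1/2}(u,u)=G_{j+1/2}(u)$, and it converges to the same Kru\v{z}kov entropy flux in Lemmas~\ref{LemmaEntropy1}--\ref{LemmaEntropy2}. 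So either read the paper's $\mathcal F^{c}_{j+1/2}$ as shorthand for the two-argument object, or state the inequality with the latter; do not claim the recombination into the diagonal flux as an identity.

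A smaller remark: your monotonicity in $q$ requires $\lambda\bigl(g(r)v^n_{j+1/2}+p\,|g'(q)|\,v^n_{j-1/2}\bigr)\le1$, i.e.\ the \emph{sum} of the two terms bounded by~$1$, whereas the CFL \eqref{cfl} only bounds each term by~$1$ separately. Either invoke the stronger CFL \eqref{eq:CFL_bv} (which does control the sum), or note that a factor $\tfrac12$ in \eqref{cfl} suffices.
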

\begin{proof} For a complete proof see \cite[Section 3.4]{2018Gottlich}.
	
\end{proof}

\subsection{Convergence to entropy solution.} \begin{thm}
	Let $\rho_0\in \mathbf{BV}\cap \Linf(\R,[0,\rho_{\max}])$. Let $\Delta x \to 0$ with $\lambda=\frac{\Delta t}{\Delta x}$ constant and satisfying the CFL condition \eqref{cfl}. The sequence of approximate solution $\rho_{\Delta}$ constructed through finite volume scheme \eqref{scheme}-\eqref{numflux} converges in $\mathbf{L}^1_{\mathrm{loc}}$ to a function in $\Linf([0,T]\times\R;[0,\rho_{\max}])$ such that $\norm{\rho}_{\mathbf{L}^1}=\norm{\rho_0}_{\mathbf{L}^1}.$ 
\end{thm}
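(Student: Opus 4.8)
The plan is to run the classical Lax--Wendroff compactness programme, adapted to a space-discontinuous non-local flux, on the a~priori estimates of Lemmas~\ref{bounds}--\ref{lem:spatialbv}. First I would collect the uniform bounds: $0\le\rho_\Delta\le\rho_{\max}$ (Lemma~\ref{bounds}), $\|\rho_\Delta(t,\cdot)\|_{\mathbf{L}^1}=\|\rho_0\|_{\mathbf{L}^1}$ (Lemma~\ref{lem:l1}), uniform $\mathbf{L}^1$-continuity in time (Lemma~\ref{lem: continuity_in_time}), and, for every $[a,b]$ with $0\notin[a,b]$, a bound on $\mathrm{TV}(\rho_\Delta(t,\cdot);[a,b])$ uniform in $\Delta x$ and $t\in[0,T]$ (Lemma~\ref{lem:spatialbv}). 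On each region $\{t\in[0,T],\ |x|\ge 1/m\}$ a Kolmogorov--Riesz (Helly-type) compactness argument then produces an $\mathbf{L}^1$-convergent subsequence, and a diagonal extraction over $m\in\N$ gives a (not relabelled) subsequence with $\rho_\Delta\to\rho$ a.e.\ on $[0,T]\times\R$ and in $\mathbf{L}^1_{\mathrm{loc}}$ away from $\{x=0\}$. Since $\{x=0\}$ is Lebesgue-null and $0\le\rho_\Delta\le\rho_{\max}$, dominated convergence upgrades this to convergence in $\mathbf{L}^1_{\mathrm{loc}}([0,T]\times\R)$, with $\rho\in\Linf([0,T]\times\R;[0,\rho_{\max}])$; and since the numerical flux~\eqref{numflux} vanishes whenever its first argument is $0$, the scheme has finite speed of propagation, so $\{\rho_\Delta\}$ is tight and passing to the limit in Lemma~\ref{lem:l1} yields $\|\rho(t,\cdot)\|_{\mathbf{L}^1}=\|\rho_0\|_{\mathbf{L}^1}$.

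\textbf{Passing to the limit.} Writing $R_\Delta,v_\Delta$ for the piecewise-constant reconstructions of $R^n_{j+1/2},v^n_{j+1/2}$, I would show $R_\Delta\to\omega_\eta*\rho$ a.e.\ using $\sum_{k=0}^{N-1}\Delta x\,\omega_k\to\|\omega_\eta\|_{\mathbf{L}^1}$, the $\mathbf{L}^1_{\mathrm{loc}}$-convergence of $\rho_\Delta$, and $|R^n_{j+1/2}-(\omega_\eta*\rho_\Delta)(t^n,x_{j+1/2})|=O(\Delta x)$; by continuity of $\psi$ this gives $v_\Delta\to v_l(\omega_\eta*\rho)$ on $\{x<0\}$ and $v_\Delta\to v_r(\omega_\eta*\rho)$ on $\{x>0\}$, the trace at $x=0$ being meaningful because $x\mapsto(\omega_\eta*\rho)(t,x)$ is Lipschitz (integrate by parts in the convolution against $\omega'_\eta$, using $\|\rho(t,\cdot)\|_{\mathbf{L}^1}$). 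Then I would multiply~\eqref{scheme} by $\Delta t\,\varphi(t^n,x_j)$, $\varphi\in\Cc1([0,T[\times\R;\R)$, sum by parts in $j$ and $n$, and let $\Delta x\to0$ using the above to recover the identity of Definition~\ref{def:sol}, so that $\rho$ is a weak solution.

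\textbf{Entropy inequality and full convergence.} Next I would multiply the discrete entropy inequality~\eqref{discrentropyinequality} by $\Delta t\,\varphi(t^n,x_j)$ with $0\le\varphi\in\Cc1([0,T[\times\R;\R^+)$ and sum over $j,n$. After summation by parts, the first line and the flux differences $\lambda(\mathcal{F}^c_{j+1/2}(\rho^n_j)-\mathcal{F}^c_{j-1/2}(\rho^n_{j-1}))$ pass to the limit into the terms $\int_0^T\int_\R(\modulo{\rho-c}\varphi_t+\sign{\rho-c}(f(t,x,\rho)-f(t,x,c))\varphi_x)$ and $\int_\R\modulo{\rho_0-c}\varphi(0,x)$ of Definition~\ref{def:solution}. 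In the term $\lambda\,\sign{\rho^{n+1}_j-c}\,c\,g(c)(v^n_{j+1/2}-v^n_{j-1/2})$, the contributions with $j\neq0$ stay on a single branch of the velocity and converge to $-\int_0^T\int_{\R^\ast}\sign{\rho-c}f(t,x,c)_x\varphi$; for $j=0$ I would decompose $v^n_{1/2}-v^n_{-1/2}=(v_r(R^n_{1/2})-v_r(R^n_{-1/2}))+(k_r-k_l)\psi(R^n_{-1/2})$, note the first part is $O(\Delta x)$ and vanishes, and bound the sign factor in the second part by $1$ in absolute value, so that after moving it to the other side it contributes precisely $\int_0^T\modulo{(k_r-k_l)\,c\,g(c)\,\psi(\rho*\omega_\eta)}\varphi(t,0)\,\mathrm{d}t$; this yields Definition~\ref{def:solution}. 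Finally, every subsequence of $\{\rho_\Delta\}$ admits a further subsequence converging to an entropy weak solution with $\mathbf{L}^1$ norm $\|\rho_0\|_{\mathbf{L}^1}$; by uniqueness of entropy weak solutions the limit is independent of the subsequence, hence the whole sequence converges, which proves the theorem.

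\textbf{Main obstacle.} I expect the delicate points to be (i) the compactness across the interface $\{x=0\}$, where the spatial BV estimate of Lemma~\ref{lem:spatialbv} degenerates and one must rely on the diagonal extraction plus the $\Linf$ bound on the null set $\{x=0\}$, and (ii) the careful accounting of the $j=0$ contribution in the passage to the limit in~\eqref{discrentropyinequality}, which is exactly what generates the extra interface term in Definition~\ref{def:solution}; by contrast the non-locality is benign here, since $\omega_\eta*\rho$ inherits spatial Lipschitz regularity.
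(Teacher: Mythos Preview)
For the theorem as stated, your compactness argument is essentially the paper's: collect the a~priori estimates of Lemmas~\ref{bounds}--\ref{lem:spatialbv}, extract $\mathbf{L}^1$-convergent subsequences on intervals $[a,b]\not\ni0$, and run a diagonal process over a countable exhaustion of $\R^\ast$ to obtain a limit in $\Linf([0,T]\times\R;[0,\rho_{\max}])$. The paper's proof of this particular theorem stops there (it only claims subsequential convergence and does not discuss the weak or entropy formulation inside this proof); your remark on tightness via finite propagation speed to secure $\norm{\rho(t,\cdot)}_{\mathbf{L}^1}=\norm{\rho_0}_{\mathbf{L}^1}$ is a detail the paper leaves implicit.

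Your second and third paragraphs go beyond the present statement and anticipate the subsequent Lemmas~\ref{LemmaEntropy1}--\ref{LemmaEntropy2} and the theorem that follows them. There the paper's route differs slightly from yours: instead of handling the $j=0$ contribution in a single pass by bounding the sign factor, the paper proves two inequalities---one for test functions vanishing near $x=0$, using~\eqref{discrentropyinequality} directly (Lemma~\ref{LemmaEntropy1}), and one for arbitrary test functions, using the weakened form $|\rho^{n+1}_j-c|\le|\rho^n_j-c|-\lambda\Delta_-\mathcal{F}^c_{j+1/2}+\lambda|cg(c)\,\Delta_-v^n_{j+1/2}|$ (Lemma~\ref{LemmaEntropy2})---and then combines them via a decomposition $\varphi=\alpha^\varepsilon+\psi^\varepsilon$ with $\psi^\varepsilon$ concentrating at $x=0$, sending $\varepsilon\to0$. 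Your single-pass treatment is more direct and lands on the same interface term; the paper's two-lemma decomposition is more modular and mirrors the standard discontinuous-flux argument of Karlsen--Towers. In either approach, full-sequence convergence is obtained only after the uniqueness theorem, which the paper proves in a later subsection.
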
 
\begin{proof}
	Lemma \ref{bounds} ensures that the sequence of approximate solutions $\rho_\Delta$ is bounded in $\Linf.$ Lemma \ref{lem: continuity_in_time} proves the $\mathbf{L}^1-$continuity in time of the sequence $\rho_\Delta,$ while Lemma \ref{lem:spatialbv} guarantees a bound on the spatial total variation in any interval $[a,b]$ not containing $x=0.$ Applying standard compactness results we have that for any interval $[a,b]$ not containing $x=0,$ there exists a subsequence, still denoted by $\rho_\Delta$ converging in $\mathbf{L}^1([0,T]\times[a,b];[0,\rho_{\max}]).$ 
	Let us take a countable set of intervals $[a_i,b_i]$ such that $\cup_i [a_i,b_i]=\R^\ast=\R\setminus\{0\},$ using a standard diagonal process, we can extract a subsequence, still denoted by $\rho_\Delta$, converging in $\mathbf{L}^1_{\mathrm{loc}}([0,T]\times \R;[0,\rho_{\max}])$ and almost everywhere in $[0,T]\times \R,$ to a function $\rho\in\mathbf{L}^\infty([0,T]\times\R;[0,\rho_{\max}]).$
\end{proof}

\begin{lem} 
	\label{LemmaEntropy1}
	Let $\rho(t,x)$ be a weak solution constructed as the limit of approximations $\rho_{\Delta}$ generated by the scheme \eqref{scheme} and let $c\in[0,\rho_{\max}]$. Let $\varphi\in\mathcal{D}(\R^\ast\times[0,T)).$ Then the following entropy inequality is satisfied: 
	\begin{eqnarray}
	&&\int_0^T \int_{\R} (\vert  \rho - c\vert \varphi_t  \mathrm{d}x \mathrm{d}t+  \int_0^T \int_{\R} \sgn(\rho -c) (f(t,x,\rho)-f(t,x,c)) \varphi_x \, \mathrm{d} x \mathrm{d} t\nonumber\\
	&&-  \int_0^T \int_{\R} \sgn(\rho - c) \partial_x f(t,x,c) \varphi \, \mathrm{d}x \mathrm{d}t+\int_{-\infty}^\infty \vert \rho_0(x)- c\vert \varphi(0,x) \mathrm{d}x \geq 0.
	\end{eqnarray}
\end{lem}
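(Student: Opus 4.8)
The plan is to carry out the classical Lax--Wendroff/Kruzhkov passage to the limit on the discrete entropy inequality \eqref{discrentropyinequality}, localized away from the interface. Since an entropy inequality only makes sense against nonnegative test functions, I take $\varphi\in\mathcal D(\R^\ast\times[0,T))$ with $\varphi\ge 0$; its support is a compact subset of $\R^\ast\times[0,T)$, so there are $0<\delta<M$ with $\supp\varphi\subset\big(([-M,-\delta]\cup[\delta,M])\times[0,T]\big)$. Consequently, for $\Delta x$ small enough none of the cells meeting $\supp\varphi$ is the interface cell $I_0$, and throughout the left (resp.\ right) part of $\supp\varphi$ one has $v^n_{j+1/2}=v_l(R^n_{j+1/2})$ (resp.\ $v_r(R^n_{j+1/2})$); thus the spatial discontinuity of $f$ is invisible to this computation, and only the $\Linf$ bound of Lemma~\ref{bounds} and the global $\mathbf{L}^1_{\mathrm{loc}}$ and a.e.\ convergence $\rho_\Delta\to\rho$ from the convergence theorem proved above will be used.

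First I would multiply \eqref{discrentropyinequality} by $\Delta x\,\varphi(t^n,x_j)\ge 0$ and sum over $j\in\Z$ and $n\ge 0$ (only finitely many $n$ contribute). Abel summation in $n$ on the first two terms, Abel summation in $j$ on the flux--difference term (using $\lambda\Delta x=\Delta t$), and the insertion of a factor $\Delta x/\Delta x$ in the last term recast the three groups as Riemann sums whose integrands are, respectively: $|\rho_j^n-c|$ against the discrete time difference quotient of $\varphi$, plus the boundary contribution $\Delta x\sum_j|\rho_j^0-c|\varphi(0,x_j)$; the quantity $\mathcal F^{c}_{j+1/2}(\rho_j^n)=\sgn(\rho_j^n-c)\big(\rho_j^n g(\rho_j^n)-c\,g(c)\big)v^n_{j+1/2}$ against the discrete space difference quotient of $\varphi$; and $\sgn(\rho_j^{n+1}-c)\,c\,g(c)\,\varphi(t^n,x_j)$ times $(v^n_{j+1/2}-v^n_{j-1/2})/\Delta x$. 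I would then let $\Delta x\to0$ along the convergent subsequence, using: $\rho_\Delta\to\rho$ in $\mathbf{L}^1_{\mathrm{loc}}$ and a.e.\ (hence also the one time-step shift converges to $\rho$ a.e., and $\rho_j^0\to\rho_0$ in $\mathbf{L}^1$); the convergence of the discrete convolution, $R^n_{j+1/2}\to(\omega_\eta\ast\rho)(t,x)$, which gives $v^n_{j+1/2}\to v_s\big((\omega_\eta\ast\rho)(t,x)\big)$ and $\mathcal F^{c}_{j+1/2}(\rho_j^n)\to\sgn(\rho-c)\big(f(t,x,\rho)-f(t,x,c)\big)$; the algebraic identity $R^n_{j+1/2}-R^n_{j-1/2}=\Delta x\big(-\omega_0\rho_j^n+\sum_{k=1}^{N-1}(\omega_{k-1}-\omega_k)\rho_{j+k}^n+\omega_{N-1}\rho_{j+N}^n\big)$ already used in Lemma~\ref{lem:spatialbv}, which yields $(v^n_{j+1/2}-v^n_{j-1/2})/\Delta x\to\partial_x\big[v_s(\omega_\eta\ast\rho)\big](t,x)$, i.e.\ $\partial_x f(t,x,c)/(c\,g(c))$ for $x\neq 0$; the smoothness of $\varphi$; and dominated convergence, legitimate since all integrands are uniformly bounded on the fixed compact $\supp\varphi$. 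The three limits come out as $-\int_0^T\!\int_\R|\rho-c|\varphi_t\,\mathrm dx\,\mathrm dt-\int_\R|\rho_0-c|\varphi(0,x)\,\mathrm dx$, then $-\int_0^T\!\int_\R\sgn(\rho-c)\big(f(t,x,\rho)-f(t,x,c)\big)\varphi_x\,\mathrm dx\,\mathrm dt$, and finally $\int_0^T\!\int_\R\sgn(\rho-c)\,\partial_x f(t,x,c)\,\varphi\,\mathrm dx\,\mathrm dt$; since the discrete sum is $\le 0$, adding these and multiplying by $-1$ produces exactly the asserted inequality.

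The hard part will be the identification of the last limit on the level set $\{(t,x):\rho(t,x)=c\}$, where the pointwise convergence $\sgn(\rho_j^{n+1}-c)\to\sgn(\rho-c)$ may fail. I would deal with this by first establishing the inequality for every $c$ such that this level set is Lebesgue-null --- which holds for all but at most countably many $c\in[0,\rho_{\max}]$, the level sets being pairwise disjoint --- and then passing to a general $c$ by continuity in $c$ of each term in the inequality; alternatively one may reproduce the treatment of this term given in \cite[Section~3.4]{2018Gottlich}. A second, milder technical point is to make the convergence $(v^n_{j+1/2}-v^n_{j-1/2})/\Delta x\to\partial_x[v_s(\omega_\eta\ast\rho)]$ strong enough (e.g.\ in $\mathbf{L}^1_{\mathrm{loc}}$) to justify taking the limit inside the space integral; this follows from the $\mathbf{C}^2$ regularity of $\omega_\eta$ and the strong $\mathbf{L}^1_{\mathrm{loc}}$ convergence of $\rho_\Delta$, telescoping the convolution weights exactly as in the displayed identity above.
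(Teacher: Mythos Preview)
Your approach is essentially identical to the paper's: multiply \eqref{discrentropyinequality} by $\Delta x\,\varphi(t^n,x_j)$, sum by parts in $n$ and $j$ to produce three Riemann-sum groups $S_1,S_2,S_3$, observe that the interface contribution ($j=0$) vanishes because $\varphi_0^n=0$, and pass to the limit by dominated convergence. The paper's proof is terser than yours and does not explicitly address the level-set issue for $\sgn(\rho_j^{n+1}-c)$ or the convergence of the discrete $v$-difference quotient that you (rightly) flag; your proposed remedies --- a.e.\ $c$ then continuity, or the treatment in \cite{2018Gottlich} --- are the standard fixes and would close those gaps.
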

\begin{proof}
	Let $\varphi$ be a test function of the type described in the statement of the lemma and set $\varphi^n_j=\varphi(t^n, x_j)$, let us denote $\Delta_- p_j=p_{j}-p_{j-1},$
	we multiply the cell entropy inequality (\ref{discrentropyinequality}) by $\varphi^n_j \Delta x,$ and then sum by parts to get 
	\begin{eqnarray*}
		\label{eq:S1}
		S_1+S_2+S_3&=& \Delta x \Delta t \sum_{n\geq 0} \sum_{j\in \Z}\modulo{\rho^{n+1}_j-c} (\varphi^{n+1}_j-\varphi^n_j)/\Delta t+\Delta x \sum_j \modulo{\rho^0_j-c} \varphi^0_j \\
		\label{eq:S2}
		&& +\Delta x \Delta t \sum_{n\geq 0} \sum_{j\in \Z}  \mathcal{F}^c_{j-1/2}\Delta_-\varphi^n_j/\Delta x\\
		\label{eq:S3}
		&& -\Delta x \Delta t \sum_{n\geq 0} \sum_{j\in \Z}\sign{\rho^{n+1}_j-c} c\,g(c)\, \Delta_- v_{j+1/2} \,\varphi^n_j / \Delta x \geq 0.
	\end{eqnarray*}
	By Lebesgue's dominated convergence theorem as $\Delta:=(\Delta x, \Delta t) \to 0$, 
	\begin{eqnarray*}
		S_1   \rightarrow \int_0^T \int_{\R} \modulo{\rho-c} \varphi_t \mathrm{d}x \mathrm{d}t+\int_{-\infty}^\infty \vert \rho_0(x)-c\vert \varphi(0,x) \mathrm{d}x,
	\end{eqnarray*}
	and 
	\begin{eqnarray*}
		S_2  \rightarrow \int_0^T \int_{\R} \sign{\rho -c} (f(t,x,\rho)-f(t,x,c)) \varphi_x \, \mathrm{d}x \mathrm{d}t.
	\end{eqnarray*}
	Now let us study the sum $S_3$ and we have 
	\begin{eqnarray*}\label{eq:s33}
		S_3&=&-\Delta x \Delta t \sum_{n\geq 0}\sum_{{j\in \Z\,\, j\leq-1}} \sign{\rho^{n+1}_{j}-c} c g(c)\, \Delta_{-} v_{j+1/2}  \,\varphi^n_j/\Delta x\\
		&&-\Delta x \Delta t \sum_{n\geq 0}\sum_{{j\in \Z\,\, j\geq 1}} \sign{\rho^{n+1}_{j}-c} c g(c)\, \Delta_- v_{j+1/2}  \,\varphi^n_j/\Delta x
		\label{eq:s34}\\
		&&-\Delta x \Delta t \sum_{n\geq 0} \sign{\rho^{n+1}_{0}-c} c g(c)\, \Delta_- v_{1/2}  \,\varphi^n_0/\Delta x\label{eq:s35},\\
		&=&S_{31}+S_{32}+S_{33}.
	\end{eqnarray*}
	Observe that the support of the test function $\varphi$ does not include the discontinuity flux point $0,$ for this reason we consider $\varphi_0=0$ according to our discretization, then the sum  $S_{33}$ is equal to zero because $\varphi_0=0.$  
	Finally, 
	\begin{eqnarray*}
		S_{31}+S_{32} \rightarrow  & -\int_0^T \int_{-\infty}^0 \sign{\rho - c} \partial_x f(t,x,c) \varphi \, \mathrm{d}x \mathrm{d}t\\
		&-\int_0^T \int_{0}^\infty \sign{\rho - c} \partial_x f(t,x,c) \varphi \, \mathrm{d}x \mathrm{d}t.
	\end{eqnarray*}
\end{proof}
\begin{lem}\label{LemmaEntropy2}
	Let $\rho(t,x)$ be a weak solution constructed as the limit of approximations $\rho_{\Delta}$ generated by the scheme \eqref{scheme} and let $c\in[0,\rho_{\max}]$. Let $\varphi\in\Cc1(\R\times[0,T)).$ Then the following entropy inequality is satisfied: 
	\begin{eqnarray*}
		&&	\int_0^{T}\int_{\R} \abs{\rho-c}\varphi_t+\sign{\rho-c}(f(t,x,\rho)-f(t,x,c))\, \partial_x \varphi\, \mathrm{d}x\,\mathrm{d}t\\
		&&+\int_0^{T}\int_{\R_\ast} |\partial_x f(t,x,c)| \varphi\, \mathrm{d}x\,\mathrm{d}t+ \int_{\R} \abs{\rho_0(x)-c}\varphi(0,x) \mathrm{d}x \\
		&&+ \int_{0}^{T}  \abs{(k_r-k_l) c \,g(c)\, \psi(\rho\ast\omega_\eta )} \varphi(t,0) \mathrm{d}t\geq0.
	\end{eqnarray*}
\end{lem}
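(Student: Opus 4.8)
The plan is to rerun the proof of Lemma~\ref{LemmaEntropy1} with essentially no changes, the only new feature being that the test function is now allowed to be nonzero at $x=0$; consequently the cell term sitting at $j=0$ which comes from the source part $\lambda\,\sign{\rho_j^{n+1}-c}\,c\,g(c)(v^n_{j+1/2}-v^n_{j-1/2})$ of the discrete entropy inequality \eqref{discrentropyinequality} is no longer discarded, but instead kept, estimated in absolute value, and carried to the limit. Concretely, I would fix $c\in[0,\rho_{\max}]$ and a nonnegative $\varphi\in\Cc1(\R\times[0,T))$, set $\varphi_j^n=\varphi(t^n,x_j)$, multiply \eqref{discrentropyinequality} by $\Delta x\,\varphi_j^n\geq 0$, sum over $j\in\Z$ and $n\geq 0$, and perform the same summation by parts as in Lemma~\ref{LemmaEntropy1}. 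This yields $S_1+S_2+S_{31}+S_{32}+S_{33}\geq 0$, where $S_1,S_2,S_{31},S_{32}$ are exactly as there and
\[
S_{33}=-\Delta t\sum_{n\geq 0}\sign{\rho_0^{n+1}-c}\,c\,g(c)\,\bigl(v^n_{1/2}-v^n_{-1/2}\bigr)\,\varphi_0^n .
\]
Since $|a|\geq -a$ for every real $a$, this gives $S_1+S_2+S_{31}+S_{32}+\modulo{S_{33}}\geq 0$, and this is the inequality I pass to the limit.

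Letting $\Delta=(\Delta x,\Delta t)\to 0$ along the subsequence for which $\rho_\Delta\to\rho$ in $\mathbf{L}^1_{\mathrm{loc}}$ and a.e., the limits of $S_1$, $S_2$ and $S_{31}+S_{32}$ are computed exactly as in Lemma~\ref{LemmaEntropy1}; the only extra remark needed is that $\partial_x f(t,\cdot,c)$ stays bounded near $x=0$ (it jumps there but has no singularity, since $\partial_x(\omega_\eta\ast\rho)$ is bounded), so nothing is lost when the support of $\varphi$ reaches $x=0$. Using $\varphi\geq 0$ and $-\sign{\rho-c}\,\partial_x f(t,x,c)\leq\modulo{\partial_x f(t,x,c)}$, the limit of $S_1+S_2+S_{31}+S_{32}$ is bounded above by
\[
\begin{aligned}
&\int_0^T\!\!\int_\R\modulo{\rho-c}\varphi_t+\sign{\rho-c}\bigl(f(t,x,\rho)-f(t,x,c)\bigr)\partial_x\varphi\,\mathrm{d}x\,\mathrm{d}t\\
&\qquad+\int_0^T\!\!\int_{\R^\ast}\modulo{\partial_x f(t,x,c)}\,\varphi\,\mathrm{d}x\,\mathrm{d}t+\int_\R\modulo{\rho_0-c}\varphi(0,x)\,\mathrm{d}x .
\end{aligned}
\]

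It remains to handle $\limsup_{\Delta\to0}\modulo{S_{33}}$. Dropping the sign factor, $\modulo{S_{33}}\leq\Delta t\sum_{n\geq 0}c\,g(c)\,\modulo{v^n_{1/2}-v^n_{-1/2}}\,\varphi_0^n$, and I split $v^n_{1/2}-v^n_{-1/2}=k_r\bigl(\psi(R^n_{1/2})-\psi(R^n_{-1/2})\bigr)+(k_r-k_l)\psi(R^n_{-1/2})$. Abel summation in the discrete convolution together with $\omega'_\eta\leq 0$ and $\omega_\eta(\eta)=0$ yields the uniform estimate $\modulo{R^n_{1/2}-R^n_{-1/2}}\leq 2\rho_{\max}\,\omega_\eta(0)\,\Delta x$, so the first summand contributes at most $c\,g(c)\,k_r\norm{\psi'}_{\Linf}\,2\rho_{\max}\,\omega_\eta(0)\,\Delta x\cdot\Delta t\sum_{n\geq 0}\varphi_0^n\to 0$. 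For the second summand, the Riemann sums $R^n_{-1/2}$ converge (in $\mathbf{L}^1(0,T)$ after extracting a further subsequence, from $\rho_\Delta\to\rho$ in $\mathbf{L}^1_{\mathrm{loc}}$ and the $\Linf$ bound) to $(\omega_\eta\ast\rho)(t,0)$, whence $\Delta t\sum_{n\geq 0}c\,g(c)\,\modulo{k_r-k_l}\,\modulo{\psi(R^n_{-1/2})}\,\varphi_0^n$ converges to $\int_0^T\modulo{(k_r-k_l)\,c\,g(c)\,\psi\bigl((\omega_\eta\ast\rho)(t,0)\bigr)}\,\varphi(t,0)\,\mathrm{d}t$. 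Hence $\limsup_{\Delta\to0}\modulo{S_{33}}$ is at most this last integral, and taking $\limsup$ in $S_1+S_2+S_{31}+S_{32}+\modulo{S_{33}}\geq 0$, together with the bound of the previous paragraph, produces exactly the inequality asserted in the lemma.

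The only genuinely delicate step is the limit of $S_{33}$: one must check that the two discrete convolution values $R^n_{\pm 1/2}$ straddling $x=0$ tend to the \emph{same} limit $(\omega_\eta\ast\rho)(t,0)$, that their difference is uniformly $O(\Delta x)$ so that only the true velocity jump $k_r-k_l$ survives, and that the convergence of $R^n_{-1/2}$ is strong enough --- say in $\mathbf{L}^1(0,T)$ along a subsequence --- to pass to the limit in the time sum against $c\,g(c)\,\varphi_0^n$. It is precisely the continuity of the nonlocal coefficient $\omega_\eta\ast\rho$ across $x=0$ that forces the interface contribution to take the exact form $\modulo{(k_r-k_l)\,c\,g(c)\,\psi(\omega_\eta\ast\rho)}$ appearing in Definition~\ref{def:solution}.
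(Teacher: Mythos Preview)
Your argument is correct and reaches the same conclusion as the paper, with a small difference in the order of operations. The paper first weakens the cell entropy inequality \eqref{discrentropyinequality} to
\[
\modulo{\rho_j^{n+1}-c}\leq \modulo{\rho_j^{n}-c}-\lambda \Delta_- \mathcal{F}^c_{j+1/2}+\lambda \modulo{c\,g(c)\,\Delta_- v^n_{j+1/2}},
\]
i.e.\ it replaces the signed source term by its absolute value \emph{for every} $j$ at the discrete level, and only then multiplies by $\varphi_j^n\Delta x$, sums, and passes to the limit; the sums $S_{61}+S_{62}$ then converge directly to $\int_0^T\!\int_{\R^\ast}|\partial_x f(t,x,c)|\varphi$, and $S_{63}$ (the $j=0$ term) converges to the interface integral. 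You instead keep the signed form of the source term, take the absolute value only of the single cell contribution $S_{33}$, and postpone the estimate $-\sign{\rho-c}\,\partial_x f(t,x,c)\leq|\partial_x f(t,x,c)|$ to the continuous level after passing to the limit in $S_{31}+S_{32}$. Both routes are legitimate and yield the stated inequality; your version is arguably tidier in that it reuses the limits already computed in Lemma~\ref{LemmaEntropy1} verbatim, while the paper's version has the advantage that no separate $O(\Delta x)$ splitting of $v^n_{1/2}-v^n_{-1/2}$ is spelled out (though that step is implicitly needed there too). Your careful justification of $\modulo{R^n_{1/2}-R^n_{-1/2}}=O(\Delta x)$ and of the convergence $R^n_{-1/2}\to(\omega_\eta\ast\rho)(t,0)$ in fact fills in details the paper leaves implicit.
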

\begin{proof}
	Let $\varphi$ be a test function of the type described in the statement of the lemma and set $\varphi^n_j=\varphi(t^n, x_j)$.    There exist $T>0$ and $R>0$ such that $\varphi(t,x)=0$ for $t>T$ and $\vert x \vert >R$. 
	Our starting point is the following cell entropy inequality which is a consequence of (\ref{discrentropyinequality}).
	\begin{equation}\label{eq:discrentropy2}
	\modulo{\rho^{n+1}_j-c}\leq \modulo{\rho^{n}_j-c}-\lambda \Delta_- \mathcal{F}^c_{j+1/2}+\lambda \modulo{c g(c) \Delta_-  v^n_{j+1/2}}
	\end{equation}
	We multiply (\ref{eq:discrentropy2}) by $\varphi^n_j \Delta x,$ and then sum by parts to get 
	\begin{eqnarray*}
		\label{eq:2S1}
		S_4+S_5+S_6&=&\Delta x \Delta t \sum_{n\geq 0} \sum_{j\in \Z}\modulo{\rho^{n+1}_j-c}(\varphi^{n+1}_j-\varphi^n_j)/\Delta t+\Delta x \sum_j \modulo{\rho^0_j-c} \varphi^0_j \\
		\label{eq:2S2}
		&&  +\Delta x \Delta t \sum_{n\geq 0} \sum_{j\in \Z}  \mathcal{F}^c_{j-1/2}(\Delta_-\varphi^n_j/\Delta x)\\
		\label{eq:2S3}
		&& +\Delta x \Delta t \sum_{n\geq 0} \sum_{j\in \Z}  \modulo{ c g(c)\Delta_- v^n_{j+1/2}}\varphi^n_j / \Delta x \geq 0.
	\end{eqnarray*}
	By Lebesgue's dominated convergence theorem as $\Delta:=(\Delta x, \Delta t) \to 0$, 
	\begin{eqnarray*}
		S_4   \rightarrow \int_0^T\int_{\R} \modulo{\rho-c} \varphi_t \mathrm{d}x \mathrm{d}t  +\int_{-\infty}^\infty \vert \rho_0(x)-c\vert \varphi(0,x) \mathrm{d}x.
	\end{eqnarray*}
	Following the same standard arguments as in Lemma (\ref{LemmaEntropy1}), 
	\begin{eqnarray*}
		S_5   \rightarrow		\int_0^{T}\int_{\R} \sign{\rho-c}(f(t,x,\rho)-f(t,x,c))\, \partial_x \varphi\, \mathrm{d}x\,\mathrm{d}t.    
	\end{eqnarray*}
	Now we can rewrite the sum $S_6$
	\begin{eqnarray*}
		\label{eq:1}
		S_6&=&\Delta x \Delta t  \sum_{n\geq 0}\sum_{{j\in \Z\,\j\leq -1}}  \modulo{ c g(c)\Delta_- v^n_{j+1/2}}\varphi^n_j / \Delta x\\
		\label{eq:2}
		&&+\Delta x \Delta t  \sum_{n\geq 0}\sum_{{j\in \Z\,\j\geq 1}}  \modulo{ c g(c)\Delta_- v^n_{j+1/2}}\varphi^n_j / \Delta x\\
		\label{eq:3}
		&&+\Delta t  \sum_{n\geq 0} \modulo{ c g(c)\Delta_- v^n_{1/2}}\varphi^n_0\\
		&=&S_{61}+S_{62}+S_{63} .
	\end{eqnarray*}
	At this point, we can observe that as $\Delta:=(\Delta x, \Delta t)\to 0$
	\begin{eqnarray*}
		S_{61}+S_{62} \rightarrow 	\int_0^{T}\int_{\R\setminus \{0\}} |f(t,x,c)_x| \varphi\, \mathrm{d}x\,\mathrm{d}t\\
		S_{63}\rightarrow \int_{0}^{T}  \abs{(k_r-k_l) c\, g(c)\, \psi(\rho\ast\omega_\eta )} \varphi(t,0) \mathrm{d}t.
	\end{eqnarray*}
\end{proof}
\begin{thm}
	Let $\rho(t,x)$ be the limit of approximations $\rho_{\Delta}$ generated by the scheme \eqref{scheme} and let $c\in[0,\rho_{\max}]$. Then $\rho(t,x)$ is an entropy solution satisfying the Definition \ref{def:solution}.
\end{thm}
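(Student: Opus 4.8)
The plan is to deduce the entropy inequality of Definition~\ref{def:solution} by welding together the two partial statements already at our disposal: Lemma~\ref{LemmaEntropy1}, valid for test functions supported away from the flux discontinuity $x=0$ and carrying precisely the term $-\sign{\rho-c}\,\partial_x f(t,x,c)\,\varphi$; and Lemma~\ref{LemmaEntropy2}, valid for arbitrary nonnegative test functions but replacing that term by the larger quantity $\modulo{\partial_x f(t,x,c)}\,\varphi$ plus the concentrated interface contribution $\abs{(k_r-k_l)c\,g(c)\,\psi(\rho\ast\omega_\eta)}\,\varphi(t,0)$. First I would note that the regularity demanded by Definition~\ref{def:solution} is already available: Lemma~\ref{bounds}, Lemma~\ref{lem:l1} and the convergence theorem proved just above give $\rho\in(\mathbf{L}^1\cap\Linf)([0,T]\times\R;[0,\rho_{\max}])$. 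Hence it remains only to fix $c\in[0,\rho_{\max}]$ and $\varphi\in\Cc1([0,T)\times\R;\R^+)$ and prove the inequality; I may apply Lemma~\ref{LemmaEntropy1} to $\Cc1$ test functions supported in $\R^\ast\times[0,T)$, since its proof uses only first derivatives of the test function (alternatively one first takes $\mathbf{C}^\infty_c$ test functions and passes to the limit by density).

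Given such $\varphi$ and $\delta>0$, I would pick a cut-off $\zeta_\delta\in\mathbf{C}^\infty(\R;[0,1])$ depending on $x$ only, with $\zeta_\delta\equiv0$ on $\{\modulo{x}\le\delta\}$, $\zeta_\delta\equiv1$ on $\{\modulo{x}\ge2\delta\}$ and $\modulo{\zeta_\delta'}\le C/\delta$. Applying Lemma~\ref{LemmaEntropy1} to $\varphi\zeta_\delta$ (supported away from $x=0$) and Lemma~\ref{LemmaEntropy2} to $\varphi(1-\zeta_\delta)\ge0$ and summing the two inequalities, the contributions produced by $\partial_x\zeta_\delta$ cancel, because $\partial_x(\varphi\zeta_\delta)=\varphi_x\zeta_\delta+\varphi\zeta_\delta'$ while $\partial_x(\varphi(1-\zeta_\delta))=\varphi_x(1-\zeta_\delta)-\varphi\zeta_\delta'$; the $\partial_t$ term, the initial-datum term and the $x=0$ boundary term recombine into the full $\varphi$ since $\zeta_\delta(0)=0$. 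What is left is
\begin{eqnarray*}
&&\int_0^T\int_\R\abs{\rho-c}\varphi_t+\sign{\rho-c}\bigl(f(t,x,\rho)-f(t,x,c)\bigr)\varphi_x\,\mathrm{d}x\,\mathrm{d}t+\int_\R\abs{\rho_0(x)-c}\varphi(0,x)\,\mathrm{d}x\\
&&\quad+\int_0^T\abs{(k_r-k_l)c\,g(c)\,\psi(\rho\ast\omega_\eta)}\varphi(t,0)\,\mathrm{d}t-\int_0^T\int_{\R^\ast}\sign{\rho-c}\,\partial_x f(t,x,c)\,\varphi\,\zeta_\delta\,\mathrm{d}x\,\mathrm{d}t\\
&&\quad+\int_0^T\int_{\R^\ast}\modulo{\partial_x f(t,x,c)}\,\varphi\,(1-\zeta_\delta)\,\mathrm{d}x\,\mathrm{d}t\ \ge\ 0.
\end{eqnarray*}

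Finally I would let $\delta\to0$. By the Leibniz rule $\partial_x(\omega_\eta\ast\rho)(t,x)=-\omega_\eta(0)\rho(t,x)-\int_x^{x+\eta}\rho(t,y)\,\omega_\eta'(y-x)\,\mathrm{d}y$ together with the $\Linf$ bound on $\rho$ from Lemma~\ref{bounds}, the function $\partial_x f(\cdot,\cdot,c)$ is bounded on $\R^\ast$, so $\modulo{\partial_x f(t,x,c)}\,\varphi\in\mathbf{L}^1$ on the compact support of $\varphi$; dominated convergence then gives $\int_0^T\int_{\R^\ast}\sign{\rho-c}\,\partial_x f(t,x,c)\,\varphi\,\zeta_\delta\to\int_0^T\int_{\R^\ast}\sign{\rho-c}\,\partial_x f(t,x,c)\,\varphi$ (as $\zeta_\delta\to1$ a.e.) and $\int_0^T\int_{\R^\ast}\modulo{\partial_x f(t,x,c)}\,\varphi\,(1-\zeta_\delta)\to0$ (as $1-\zeta_\delta$ is supported in $\{\modulo{x}\le2\delta\}$). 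The limiting inequality is exactly the one in Definition~\ref{def:solution}, so $\rho$ is an entropy weak solution. I expect no genuine obstacle beyond the bookkeeping of this cut-off argument — verifying that each cross term either cancels or converges — since all the substantive estimates (the $\Linf$, $\mathbf{L}^1$ and spatial $\mathbf{BV}$ bounds, and the passage from the discrete to the continuous entropy inequalities) are already established in Lemmas~\ref{bounds}--\ref{LemmaEntropy2}.
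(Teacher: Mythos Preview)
Your proposal is correct and follows essentially the same approach as the paper: decompose the test function as $\varphi=\varphi\zeta_\delta+\varphi(1-\zeta_\delta)$, apply Lemma~\ref{LemmaEntropy1} to the part supported away from $x=0$ and Lemma~\ref{LemmaEntropy2} to the part concentrated near $x=0$, sum, and let $\delta\to0$. You are in fact more explicit than the paper about the cancellation of the $\zeta_\delta'$ cross terms and about the dominated-convergence justification for the limit, but the structure of the argument is identical.
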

\begin{proof}
	Let $0\leq \varphi \in \Cc1([0,T)\times \R).$  We set $\varphi^n_j=\varphi(t^n,x_j).$
	For $\varepsilon>0,$ define the set 
	\begin{eqnarray*}
		\sigma^\varepsilon_0=\{(t,x)\in [0,T)\times \R| x\in(-\varepsilon,\varepsilon),\,t\in[0,T)\}.
	\end{eqnarray*}
	For each sufficiently small $\varepsilon>0$ we can write the test function $\varphi$ as a sum of two test functions, one having support away from 0 and the other with support in $\sigma_0^\varepsilon.$ We take test functions $\psi^{\varepsilon},\,\alpha^{\varepsilon}\in \Cc1([0,T)\times \R)$ such that 
	\begin{eqnarray*}
		\varphi(t,x)=\psi^{\varepsilon}(t,x)+\alpha^{\varepsilon}(t,x), \quad 0\leq\psi^{\varepsilon}(t,x)\leq \varphi(t,x), \quad  0\leq\alpha^{\varepsilon}(t,x)\leq \varphi(t,x),
	\end{eqnarray*}
	where $\psi^\varepsilon$ has support located around the jump in 0 
	\begin{eqnarray*}
		\supp(\psi^\varepsilon)\subseteq \sigma^\varepsilon_0, \qquad
		\psi^\varepsilon(t,0)=\varphi(t,0),
	\end{eqnarray*}
	and $\alpha^\varepsilon$ vanishes around the jump, i.e.
	\begin{eqnarray*}
		\supp(\alpha^\varepsilon)\subseteq [0,T)\times \R^\ast. 
	\end{eqnarray*}
	We can take this decomposition in such way that 
	\begin{equation}\label{eq:limit}
	\alpha^\varepsilon\rightarrow\varphi \quad \hbox{ in } \mathbf{L}^1([0,T)\times \R ), \quad \psi^\varepsilon\rightarrow 0 \quad \hbox{ in } \mathbf{L}^1([0,T)\times \R )
	\end{equation}
	as $\varepsilon\to 0.$
	By applying Lemma \ref{LemmaEntropy1} with the test function $\alpha^\varepsilon$ and Lemma \ref{LemmaEntropy2} with $\psi^\varepsilon$, and summing the two entropy inequalities, using $\varphi=\psi^\varepsilon+\alpha^\varepsilon$ along with $\psi^\varepsilon(0,t)=\varphi(0,t)$ to get 
	\begin{eqnarray*}
		&&\int_{0}^T \int_\R (\vert  \rho - c\vert \varphi_t  \mathrm{d}x \mathrm{d}t + \int_{0}^T \int_\R \sign{\rho -c} (f(t,x,\rho)-f(t,x,c))\varphi_x  \mathrm{d}x \mathrm{d}t\\
		&&-\int_{0}^T \int_\R \sign{\rho -c}  f(t,x,c)_x \alpha^\varepsilon \mathrm{d}x \mathrm{d}t + \int_{0}^T \int_{\R^\ast} \modulo{  f(t,x,c))_x } \psi^\varepsilon \mathrm{d}x \mathrm{d}t\\ 
		&&+\int_{0}^{T}  \abs{(k_r-k_l) c\,g(c)\, \psi(\rho\ast\omega_\eta )} \varphi(t,0) \mathrm{d}t+\int_{-\infty}^\infty \vert \rho_0(x)-c\vert \phi(0,x) \mathrm{d}x\geq 0.
	\end{eqnarray*}
	Thanks to (\ref{eq:limit}), we can complete the proof by sending $\varepsilon\to 0.$
\end{proof}

\subsection{$\mathbf{L}^1$-Stability and uniqueness.}
\begin{thm}
	Assume the hypothesis \eqref{hyp_v}. If $\rho$ and $\tilde\rho$ are two entropy solutions of  \eqref{eq:CL} in the sense of Definition \ref{def:solution}, the following inequality holds
	\begin{equation}
	\label{eq:stability}
	\norm{\rho(t,\cdot)-\tilde \rho(t,\cdot)}_{\mathbf{L}^1(\R)}\le  e^{K(T) t}\norm{\rho(0,\cdot)-\tilde \rho(0,\cdot)}_{\mathbf{L}^1(\R)},
	\end{equation}
	for almost every $0<t<T$ and some suitable constant $K(T)>0$.
\end{thm}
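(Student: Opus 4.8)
The plan is to run Kruzhkov's doubling of variables adapted to a flux with a single spatial discontinuity, following \cite{KarlsenTowersLax,KarlsenRisebroTowers}, while keeping the non-local convolution terms frozen as $x,t$-dependent coefficients, and then to close the resulting differential inequality with Gr\"onwall.

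First I would rewrite the flux in multiplicatively separable form $f(t,x,\rho)=\phi(\rho)\,V[\rho](t,x)$, with $\phi(u)=u\,g(u)$ and $V[\rho](t,x)=\big(H(-x)k_l+H(x)k_r\big)\psi\big((\omega_\eta\ast\rho)(t,x)\big)$, and likewise $f(t,x,\tilde\rho)=\phi(\tilde\rho)\,V[\tilde\rho](t,x)$. Writing $V=V[\rho]$, $\widetilde V=V[\tilde\rho]$, the crucial point is that $\phi$ is the \emph{same} on both sides of $x=0$: only the multiplicative constant $k_s$ jumps. Since both entropy solutions lie in $\Linf\cap\mathbf{BV}(\R^\ast;[0,\rho_{\max}])$ (the bounds being inherited from the scheme via Lemmas~\ref{bounds} and \ref{lem:spatialbv}), the coefficients $V,\widetilde V$ are bounded and Lipschitz in $x$ on $\R^\ast$ with a single jump at $x=0$, so one-sided traces at $x=0^\pm$ exist, and the interface term in Definition~\ref{def:solution} is exactly $\modulo{f(t,0^+,c)-f(t,0^-,c)}\varphi(t,0)$, i.e.\ the Karlsen--Towers admissibility condition that makes the problem well posed.

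I would then apply the entropy inequality of Definition~\ref{def:solution} to $\rho(t,x)$ with $c=\tilde\rho(s,y)$ and, symmetrically, to $\tilde\rho(s,y)$ with $c=\rho(t,x)$, tested against $\varphi(t,x,s,y)=\Psi\!\big(\tfrac{t+s}{2},\tfrac{x+y}{2}\big)\,\delta_\nu(t-s)\,\delta_\nu(x-y)$, and add. Letting the mollification width $\nu\to0$, the usual Kruzhkov cancellations leave: (i) $\int\!\int\modulo{\rho-\tilde\rho}\,\Psi_t$; (ii) a flux term which, because $f(t,x,\rho)-f(t,x,\tilde\rho)=(\phi(\rho)-\phi(\tilde\rho))V(t,x)$ on one side and $(\phi(\tilde\rho)-\phi(\rho))\widetilde V(t,x)$ on the other, collapses on the diagonal and, after moving $\px$ onto $\Psi$, is dominated by $C\,\norm{(V-\widetilde V)(t,\cdot)}_{\mathbf{L}^1}+C\,\norm{\rho(t,\cdot)-\tilde\rho(t,\cdot)}_{\mathbf{L}^1}$; (iii) the $\px f(t,x,c)$ terms, which on $\R^\ast$ equal $\phi(c)\,\px V$ and hence contribute at most $\norm{\phi}_{\Linf}\norm{\px(V-\widetilde V)(t,\cdot)}_{\mathbf{L}^1}+\norm{\phi'}_{\Linf}\norm{\px\widetilde V(t,\cdot)}_{\Linf}\norm{\rho(t,\cdot)-\tilde\rho(t,\cdot)}_{\mathbf{L}^1}$; and (iv) two singular contributions concentrated at $x=0$. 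The step I expect to be the main obstacle is handling (iv): one must show that the interface term $\modulo{(k_r-k_l)\tilde\rho\,g(\tilde\rho)\,\psi(\omega_\eta\ast\rho)}$ coming from $\rho$'s inequality, the symmetric term $\modulo{(k_r-k_l)\rho\,g(\rho)\,\psi(\omega_\eta\ast\tilde\rho)}$ from $\tilde\rho$'s, and the one-sided flux traces generated by the doubling together sum to a quantity bounded by $C\,\norm{\rho(t,\cdot)-\tilde\rho(t,\cdot)}_{\mathbf{L}^1}$; here I would follow the Karlsen--Towers computation, exploiting that the jump of the Kruzhkov entropy flux across $x=0$ is $\le\modulo{f(t,0^+,\cdot)-f(t,0^-,\cdot)}$, and bounding $\modulo{\psi(\omega_\eta\ast\rho)-\psi(\omega_\eta\ast\tilde\rho)}(t,0)\le\norm{\psi'}_{\Linf}\norm{\omega_\eta}_{\Linf}\norm{\rho(t,\cdot)-\tilde\rho(t,\cdot)}_{\mathbf{L}^1}$.

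It remains to convert the velocity differences into $\norm{\rho(t,\cdot)-\tilde\rho(t,\cdot)}_{\mathbf{L}^1}$. From \eqref{hyp_v} and \eqref{hyp:convol}, $\modulo{V-\widetilde V}(t,x)\le\max_{s}k_s\norm{\psi'}_{\Linf}\modulo{\omega_\eta\ast(\rho-\tilde\rho)}(t,x)\le\max_s k_s\norm{\psi'}_{\Linf}\norm{\omega_\eta}_{\Linf}\norm{\rho(t,\cdot)-\tilde\rho(t,\cdot)}_{\mathbf{L}^1}$; and, using $\omega_\eta(\eta)=0$ and integrating by parts, $\px(\omega_\eta\ast(\rho-\tilde\rho))=-\omega_\eta(0)(\rho-\tilde\rho)-\omega_\eta'\ast(\rho-\tilde\rho)$ on $\R^\ast$, whence $\norm{\px(V-\widetilde V)(t,\cdot)}_{\mathbf{L}^1}\le C\big(\omega_\eta(0)+\norm{\omega_\eta'}_{\mathbf{L}^1}+\norm{\omega_\eta}_{\Linf}\big)\norm{\rho(t,\cdot)-\tilde\rho(t,\cdot)}_{\mathbf{L}^1}$ with $C$ depending on $\rho_{\max},k_l,k_r,\norm{\psi}_{\mathbf{C}^2},\norm{\tilde\rho}_{\mathbf{L}^1}$, and the same type of bound controls $\norm{\px\widetilde V(t,\cdot)}_{\Linf}$ by the data alone. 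Substituting these into (i)--(iv) gives, in the distributional sense on $(0,T)$,
\[
\frac{d}{dt}\norm{\rho(t,\cdot)-\tilde\rho(t,\cdot)}_{\mathbf{L}^1(\R)}\le K(T)\,\norm{\rho(t,\cdot)-\tilde\rho(t,\cdot)}_{\mathbf{L}^1(\R)},
\]
with $K(T)$ depending only on $\rho_{\max}$, $k_l,k_r$, $\norm{g}_{\Linf},\norm{g'}_{\Linf}$, $\norm{\psi}_{\mathbf{C}^2}$, $\norm{\omega_\eta}_{\mathbf{C}^1}$ and the $\mathbf{L}^1,\Linf$ bounds of the two solutions. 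Gr\"onwall's inequality yields \eqref{eq:stability}, and choosing $\rho(0,\cdot)=\tilde\rho(0,\cdot)$ gives uniqueness.
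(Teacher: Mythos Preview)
Your plan is essentially the paper's: Kru\v{z}kov doubling adapted to a single flux discontinuity \`a la \cite{KarlsenTowersLax,KarlsenRisebroTowers}, with the non-local velocity treated as a frozen coefficient whose defect is absorbed into a right-hand side of size $\norm{\rho-\tilde\rho}_{\mathbf{L}^1}$, followed by Gr\"onwall.

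The only substantive organisational difference concerns your step (iv). You aim to run the doubling with test functions that do \emph{not} vanish at $x=0$ and to show that the two admissibility terms $\modulo{(k_r-k_l)\,c\,g(c)\,\psi(\omega_\eta\ast\cdot)}$ together with the one-sided flux traces combine to something $\le C\norm{\rho-\tilde\rho}_{\mathbf{L}^1}$. The paper instead first carries out the doubling for test functions supported in $\R^+\times\R^\ast$ (so the interface terms in Definition~\ref{def:solution} drop out entirely), obtaining the Kru\v{z}kov-type inequality \eqref{eq:inequality} on $\R^\ast$; only afterwards does it remove the restriction by multiplying with a cutoff $\Psi_h=1-\mu_h$ and show that the single boundary contribution
\[
S=[\sign{\rho-\tilde\rho}(f(t,x,\rho)-f(t,x,\tilde\rho))]_{x=0^-}^{x=0^+}
\]
is $\le 0$ by Rankine--Hugoniot, using that in the present setting there is no flux crossing. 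This is slightly sharper and cleaner than your proposed bound by $C\norm{\rho-\tilde\rho}_{\mathbf{L}^1}$: one gets a genuine sign, and one never needs to juggle the two interface penalties from Definition~\ref{def:solution} simultaneously. Either route closes the argument, but if you follow the paper's split you will find the interface step reduces to quoting \cite[Theorem~2.1]{KarlsenRisebroTowers} directly.
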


\begin{proof}
	Following \cite[Theorem 2.1]{KarlsenRisebroTowers}, for any two entropy solutions $\rho$ and $\tilde \rho$ we can derive the $L^1$ contraction property through the doubling of variables technique:
	\begin{eqnarray}\label{eq:inequality}
	&&\iint_{\R^{+}\times\R} \left(\modulo{\rho-\tilde \rho}\phi_t+
	\sign{ \rho-\tilde \rho}(f(t,x,\rho)-f(t,x,\tilde{\rho}))\phi_x\right) \mathrm{d} x \mathrm{d} t\nonumber \\ 
	&&\leq K \iint_{\R^+\times\R} \modulo{\rho-\tilde \rho} \phi \mathrm{d} x \mathrm{d} t,
	\end{eqnarray}
	where $K=K(T),$ for any $0\leq \phi \in \mathcal D{(\R^+\times \R^*)}.$
	We remove the assumption in (\ref{eq:inequality}) that $\phi$ vanishes near $0,$ by introducing the following Lipschitz function for $h>0$
	\begin{eqnarray*}
		\mu_h(x)=\begin{cases}
			\frac{1}{h}(x+2h), \quad x\in[-2h,-h],\\
			1,\quad x\in[-h,h],\\
			\frac{1}{h}(2h-x), \quad x\in[h,2h],\\
			0, \quad|x|\geq 2h.
		\end{cases}
	\end{eqnarray*}
	Now we can define $\Psi_h(x)=1-\mu_h(x),$ noticing that $\Psi_h-1\to 0$ in $L^1$ as $h\to 0.$ Moreover, $\Psi_h$ vanishes in a neighborhood of $0.$ For any $0\leq \Phi \in \Cc\infty(\R^+\times\R),$ we can check that $\phi=\Phi\Psi_h$ is an admissible test function for \eqref{eq:inequality}. Using $\phi$ in \eqref{eq:inequality} and integrating by parts we get 
	\begin{eqnarray*}
		&&\iint_{\R^+\times\R} \left(\modulo{\rho- \tilde \rho}\Phi_t\Psi_h+\sign{\rho- \tilde \rho}(f(t,x,\rho)-f(t,x,\tilde \rho))\Phi_x\Psi_h \right) \mathrm{d} x \mathrm{d} t\\ 
		&&-\underbrace{\iint_{\R^+\times\R}\sign{\rho-\tilde \rho)(f(t,x,\rho}-f(t,x,\tilde \rho))\Phi(t,x)\Psi'_h(x) \mathrm{d} x \mathrm{d} t}_{J(h)}\\
		&&\leq K \iint_{\R^+\times\R} \modulo{\rho-\tilde \rho} \Phi \Psi_h \mathrm{d} x \mathrm{d} t. 
	\end{eqnarray*}
	Sending $h\to 0$ we end up with 
	\begin{eqnarray*}
		&\iint_{\R^+\times\R} \left(\modulo{\rho-\tilde \rho}\Phi_t+\sign{ \rho-\tilde \rho}(f(t,x,\rho)-f(t,x,\tilde \rho))\Phi_x\right) \mathrm{d} x \mathrm{d} t\\ 
		&\qquad \leq K \iint_{\R^+\times\R} \modulo{\rho-\tilde \rho} \Phi \mathrm{d} x \mathrm{d} t+\lim_{h\to 0}J(h). 
	\end{eqnarray*}
	We can write 
	\begin{eqnarray*}
		\lim_{h\to 0} J(h) &=& \lim_{h \to 0} \frac{1}{h} \int_0^T\int_h^{2h} \sign{\rho-\tilde \rho}(f(t,x,\rho)-f(t,x,\tilde \rho)) \mathrm{d} x \mathrm{d} t\\
		&&- \lim_{h \to 0} \frac{1}{h} \int_0^T\int_{-2h}^{-h} \sign{\rho-\tilde \rho}(f(t,x,\rho)-f(t,x,\tilde \rho)) \mathrm{d} x \mathrm{d} t\\
		&=&\int_0^T  [\sign{\rho-\tilde \rho}(f(t,x,\rho)-f(t,x,\tilde \rho))]_{x=0^-}^{x=0^+} \Phi(t,0) \mathrm{d} t,
	\end{eqnarray*}
	where we indicate the limits from the right and left at $x=0.$
	The aim is to prove that the limit $\displaystyle{\lim_{h \to 0}}J(h)\leq 0.$ This is equivalent to prove that the quantity
	\begin{eqnarray*}
		S:=[\sign{\rho-\tilde \rho}(f(t,x,\rho)-f(t,x,\tilde \rho))]_{x=0^-}^{x=0^+}\leq 0.
	\end{eqnarray*}
	
	A simple application of the Rankine-Hugoniot condition yields $S\leq 0,$ see the proof of \cite[Theorem 2.1]{KarlsenRisebroTowers}, noticing that in this setting there is no flux crossing. 
	Therefore we conclude that $S\leq 0.$
	In this way we know that (\ref{eq:inequality}) holds for any $0\leq\phi\in \Cc\infty (\R^+\times\R).$ For $r>1,$ let $\gamma_r:\R \to \R$ be a $C^\infty$ function which takes values in $[0,1]$ and satisfies 
	\begin{eqnarray*}
		\gamma_r(x)=\begin{cases}
			1, \quad \modulo{x}\leq r,\\
			0, \quad \modulo{x}\geq r+1.
		\end{cases}
	\end{eqnarray*}
	Fix $s_0$ and $s$ such that $0<s_0<s<T.$ For any $\tau>0$ and $k>0$ with $0<s_0+\tau<s+k<T,$ let $\beta_{\tau,k}:[0,T]\to\R$ be a Lipschitz function that is linear on $[s_0,s_0+\tau[\cup[s,s+k]$ and satisfies 
	\begin{eqnarray*}
		\beta_{\tau,k}(t)=
		\begin{cases}
			0, \quad t\in[0,s_0]\cup[s+k,T],\\
			1,  \quad t\in[s_0+\tau,s].
		\end{cases}
	\end{eqnarray*}
	We can take the admissible test function via a standard regularization argument\\
	$\phi=\gamma_r(x)\beta_{\tau,k}(t).$ Using this test function in (\ref{eq:inequality}) we obtain 
	\begin{eqnarray*}
		&&\frac{1}{k} \int_s^{s+k} \int_{\R} \modulo{\rho(t,x)-\tilde \rho(t,x)} \gamma_r(x) \mathrm{d} x \mathrm{d} t  -\frac{1}{\tau} \int_{s_0}^{s_0+k} \int_\R  \modulo{\rho(t,x)-\tilde \rho(t,x)} \gamma_r(x) \mathrm{d} x \mathrm{d} t\\
		&&\leq K \int_{s_0}^{s_0+k} \int_\R \modulo{\rho-\tilde\rho} \gamma_r(x) \mathrm{d} x \mathrm{d} t\\
		&&\quad+\norma{\gamma'_r}_\infty \int_{s_0}^{s+k} \int_{r\leq \modulo{x}\leq r+1}\sign{\rho-\tilde \rho}(f(t,x,\rho)-f(t,x,\tilde \rho)) \mathrm{d} x \mathrm{d} t. 
	\end{eqnarray*}
	Sending $s_0\to 0,$ we get 
	\begin{eqnarray*}
		\frac{1}{k} \int_s^{s+k} \int_{-r}^{r}\modulo{\rho(t,x)-\tilde \rho(t,x)} \gamma_r(x) \mathrm{d} x \,\mathrm{d} t&\leq& \int_{-r}^{r} \modulo{ \rho_0(x)- \tilde \rho_0(x)} \mathrm{d} x\\
		&&+ \frac{1}{\tau} \int_{0}^{\tau} \int_{-r}^{r} \modulo{\tilde \rho(t,x)-\tilde \rho_0(x)} \mathrm{d} x \mathrm{d} t\\
		&&+\frac{1}{\tau} \int_{0}^{\tau} \int_{-r}^{r} \modulo{ \rho(t,x)-\rho_0(x)} \mathrm{d} x \,\mathrm{d} t\\
		&&+ K \int_{0}^{t+\tau} \int_\R \modulo{\rho-\tilde \rho} \gamma_r(x) \mathrm{d} x \,\mathrm{d} t + o\left(\frac{1}{r}\right).
	\end{eqnarray*}
	Observe that the second and the third terms on the right-hand side of the inequality tends to zero as $\tau\to 0$ following the same argument in \cite[Lemma B.1]{KarlsenRisebroTowers}, because our initial condition is satisfied in the ``weak" sense of the definition of our entropy condition. Sending $\tau\to 0$ and $r\to\infty,$ we have 
	\begin{eqnarray*}
		\frac{1}{k} \int_s^{s+k} \int_\R \modulo{\rho(t,x)-\tilde \rho(t,x)} \mathrm{d} x \,\mathrm{d} t &\leq& \int_\R \modulo{\rho_0(x)-\tilde \rho_0(x)}\mathrm{d} x\\
		&&+K \int_0^{s+k} \int_\R  \modulo{\rho(t,x)-\tilde \rho(t,x)} \mathrm{d} x \,\mathrm{d} t.
	\end{eqnarray*}
	Sending $k\to0$ and an application of Gronwall's inequality give us the statement.
\end{proof}

\begin{lem}[{\bf A Kru\v{z}kov-type integral inequality}]
	For any two entropy solutions $\rho=\rho(t,x)$ and $\tilde \rho=\tilde \rho(t,x)$
	the integral inequality \eqref{eq:inequality} holds for any $0\leq \phi\in \Cc\infty(\R^+\times \R\setminus\{0\}).$  
\end{lem}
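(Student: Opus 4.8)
The plan is to run Kru\v{z}kov's doubling-of-variables argument in the form of \cite[Theorem~2.1]{KarlsenRisebroTowers}, while carrying along the extra terms produced by the non-local dependence of the flux, exactly as in the non-local settings of \cite{BlandinGoatin2016,ChiarelloGoatin}. Since $\phi$ has compact support in $\R^+\times(\R\setminus\{0\})$, there is $\delta>0$ with $\supp\phi\cap\{\abs x\le\delta\}=\emptyset$, so on $\supp\phi$ we may write $f(t,x,u)=u\,g(u)\,\mathcal V_w(t,x)$ with
$$\mathcal V_w(t,x):=H(-x)\,v_l\big((\omega_\eta*w)(t,x)\big)+H(x)\,v_r\big((\omega_\eta*w)(t,x)\big).$$
For every $w\in\Linf$ with $0\le w\le\rho_{\max}$, the assumptions \eqref{hyp:convol} give $\partial_x(\omega_\eta*w)(t,x)=-w(t,x)\,\omega_\eta(0)-\int_x^{x+\eta}w(t,y)\,\omega_\eta'(y-x)\,\mathrm dy$, so $\mathcal V_w$ is Lipschitz in $x$ on $\supp\phi$ with $\norm{\partial_x\mathcal V_w}_{\Linf}\le 2\rho_{\max}\,\omega_\eta(0)\,\max_{s=l,r}k_s\,\norm{\psi'}_{\Linf}$, using $\norm{\omega_\eta'}_{\mathbf L^1}=\omega_\eta(0)$. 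Hence on $\supp\phi$ each of $\rho$ and $\tilde\rho$ is a Kru\v{z}kov entropy solution of a conservation law with a fixed, Lipschitz-in-$x$ coefficient; no BV information on $\rho,\tilde\rho$ is needed (the $x$-derivative always lands on the kernel), and, crucially, the flux discontinuity at $x=0$ is invisible to this $\phi$.

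First I would take the entropy inequality in Definition~\ref{def:solution} for $\rho=\rho(t,x)$ with $c=\tilde\rho(s,y)$, test it against $\phi\big(\tfrac{t+s}{2},\tfrac{x+y}{2}\big)\zeta_{\eps_0}(t-s)\zeta_{\eps_1}(x-y)$ for standard mollifiers $\zeta_\eps$, perform the symmetric operation for $\tilde\rho=\tilde\rho(s,y)$ with $c=\rho(t,x)$, add the two, and send $\eps_1\to0$ and then $\eps_0\to0$, using the time-continuity of entropy solutions. The term at $t=0$ drops because $\phi$ vanishes there; the classical cancellations collapse the $\partial_t$-part to $\iint\modulo{\rho-\tilde\rho}\phi_t$ and, up to remainders, the $\partial_x$-part to $\iint\sign{\rho-\tilde\rho}\big(f(t,x,\rho)-f(t,x,\tilde\rho)\big)\phi_x$. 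What remains is a term $\mathcal R$ made of (i) the two ``frozen-coefficient'' contributions inherited from $-\sign{\rho-c}\partial_x f(t,x,c)\varphi$, whose diagonal limit is $\sign{\rho-\tilde\rho}\big(\rho g(\rho)\,\partial_x\mathcal V_{\tilde\rho}-\tilde\rho g(\tilde\rho)\,\partial_x\mathcal V_\rho\big)\phi$, and (ii) the non-local mismatch terms of the form $\sign{\rho-\tilde\rho}\,a\,(\mathcal V_\rho-\mathcal V_{\tilde\rho})$, paired with $\phi$ or with $\phi_x$, where $a$ is a bounded combination of $\rho g(\rho)$ and $\tilde\rho g(\tilde\rho)$. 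Thus the doubling yields $\iint\big(\modulo{\rho-\tilde\rho}\phi_t+\sign{\rho-\tilde\rho}(f(t,x,\rho)-f(t,x,\tilde\rho))\phi_x\big)\ge-\mathcal R$.

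The only place the argument leaves the classical track, and the step I expect to be the main obstacle, is the estimate $\modulo{\mathcal R}\le K(T)\iint\modulo{\rho-\tilde\rho}\phi$, which yields \eqref{eq:inequality}. For the source part, split $\rho g(\rho)\,\partial_x\mathcal V_{\tilde\rho}-\tilde\rho g(\tilde\rho)\,\partial_x\mathcal V_\rho=(\rho g(\rho)-\tilde\rho g(\tilde\rho))\,\partial_x\mathcal V_\rho+\rho g(\rho)\,\partial_x(\mathcal V_{\tilde\rho}-\mathcal V_\rho)$: the first summand is $\le C\modulo{\rho-\tilde\rho}$ pointwise, by $\abs{\rho g(\rho)-\tilde\rho g(\tilde\rho)}\le(\norm g_{\Linf}+\rho_{\max}\norm{g'}_{\Linf})\modulo{\rho-\tilde\rho}$ and the bound on $\partial_x\mathcal V_\rho$ above; for the second, differentiate and add and subtract $v_s'((\omega_\eta*\tilde\rho))\,\partial_x(\omega_\eta*\rho)$, obtaining $v_s'((\omega_\eta*\tilde\rho))\,\partial_x(\omega_\eta*(\tilde\rho-\rho))+\big(v_s'((\omega_\eta*\tilde\rho))-v_s'((\omega_\eta*\rho))\big)\partial_x(\omega_\eta*\rho)$, and then use $\norm{\psi''}_{\Linf}$ together with the kernel identities $\abs{(\omega_\eta*w)(t,x)}\le\norm{\omega_\eta}_{\Linf}\int_x^{x+\eta}\abs{w(t,y)}\,\mathrm dy$ and $\abs{\partial_x(\omega_\eta*w)(t,x)}\le\omega_\eta(0)\abs{w(t,x)}+\int_x^{x+\eta}\abs{w(t,y)}\abs{\omega_\eta'(y-x)}\,\mathrm dy$ with $w=\tilde\rho-\rho$; the mismatch terms in (ii) are handled in the same way. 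Integrating against $\phi$ (resp.\ $\phi_x$) and applying Fubini to the convolution pieces, so that the ``thickened'' quantity $\int_x^{x+\eta}\modulo{(\rho-\tilde\rho)(t,y)}\,\mathrm dy$ is re-absorbed into the right-hand side using the compact support $[0,\eta]$ of $\omega_\eta$ and $\norm{\omega_\eta'}_{\mathbf L^1}=\omega_\eta(0)$, delivers $\modulo{\mathcal R}\le K(T)\iint\modulo{\rho-\tilde\rho}\phi\,\mathrm dx\,\mathrm dt$ with $K(T)$ depending only on $\rho_{\max}$, $\max_sk_s$, $\norm\phi_{\Linf}$ and the $\Linf$-norms of $g,g',\psi,\psi',\psi'',\omega_\eta,\omega_\eta'$. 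The heart of the matter, then, is precisely this bookkeeping: unlike in the purely local case the remainder is naturally controlled only through the thickened average of $\modulo{\rho-\tilde\rho}$, and converting that into a bound of the form required by \eqref{eq:inequality} is where the finiteness of the kernel support must be exploited.
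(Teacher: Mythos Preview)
Your proposal follows essentially the same route as the paper's own proof: a doubling-of-variables argument in the style of \cite{KarlsenRisebroTowers}, with the non-local remainder controlled through the kernel estimates as in \cite{BlandinGoatin2016,ChiarelloGoatin}. The paper organises the computation by writing the added entropy inequalities as a sum of terms $I_1,\dots,I_5$ after the change of variables $(\tilde t,\tilde x,\tau,z)$ and passing $p\to0$; your terms (i) and (ii) correspond respectively to the limits of $I_3$ and of $I_4,I_5$, and the splitting $\rho g(\rho)\,\partial_x\mathcal V_{\tilde\rho}-\tilde\rho g(\tilde\rho)\,\partial_x\mathcal V_\rho=(\rho g(\rho)-\tilde\rho g(\tilde\rho))\partial_x\mathcal V_\rho+\rho g(\rho)\,\partial_x(\mathcal V_{\tilde\rho}-\mathcal V_\rho)$ is exactly the one the paper uses when estimating $I_3$.

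One remark on the point you single out as the main obstacle. You write that $K(T)$ depends on $\norm\phi_{\Linf}$; strictly speaking this would not deliver \eqref{eq:inequality} with a $\phi$-independent constant. The paper's estimate of the second piece of $I_3$ runs into the same issue: after bounding $\modulo{\partial_xV-\partial_xU}$ by $\norm{\omega_\eta'}_{\Linf}\norm{\rho(t,\cdot)-\tilde\rho(t,\cdot)}_{\mathbf L^1}+\omega_\eta(0)\big(\modulo{\rho-\tilde\rho}(t,x+\eta)+\modulo{\rho-\tilde\rho}(t,x)\big)$, one does not literally recover $K\iint\modulo{\rho-\tilde\rho}\phi$ but rather a bound involving $\int_0^T\norm{\rho-\tilde\rho}_{\mathbf L^1}\,\mathrm dt$ and shifts of $\phi$. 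This is harmless for the intended application, since in the proof of the stability theorem the test functions are cut-offs $\gamma_r(x)\beta_{\tau,k}(t)$ with $0\le\gamma_r\le1$ and $\gamma_r\to1$, so the Fubini/shift argument feeds directly into the Gronwall step; but it is worth being aware that neither argument yields \eqref{eq:inequality} in its sharpest literal form.
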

\begin{proof}
	Let $0\leq\phi\in \Cc\infty\left((\R^+\times \R\setminus\{0\})^2\right),\: \phi=\phi(t,x,s,y), \: \rho=\rho(t,x)$ and $\tilde \rho=\tilde \rho(s,y).$
	From the definition of entropy solution for $\rho=\rho(t,x)$ with $\kappa=\tilde \rho(s,y)$ we get 
	\begin{eqnarray*}
		&&-\iint_{\R^+\times \R} \left(\modulo{\rho-\tilde \rho}\phi_t+\sign{\rho-\tilde \rho}\left(f(t,x,\rho)-f(t,x,\tilde \rho)\right)\phi_x\right)\, \mathrm{d} t\, \mathrm{d} x \\
		&&+\iint_{\R^+\times \R\setminus\{0\}} \sign{\rho-\tilde \rho} f(t,x,\tilde \rho)_x \phi \,\mathrm{d} t\, \mathrm{d} x \leq 0.
	\end{eqnarray*}
	Integrating over $(s,y)\in \R^+\times \R,$ we find
	\begin{eqnarray}\label{eq:1ineq} 
	&&-{\iint\iint}_{(\R^+\times \R)^2} \left(\modulo{\rho-\tilde \rho}\phi_t+\sign{\rho-\tilde \rho}\left(f(t,x,\rho)-f(t,x,\tilde \rho)\right)\phi_x\right)\, \mathrm{d} t \,\mathrm{d} x \,\mathrm{d} s\, \mathrm{d} y\nonumber\\ 
	&&+{\iint\iint}_{(\R^+\times \R\setminus\{0\})^2} \sign{\rho-\tilde \rho} f(t,x,\tilde \rho)_x \phi\, \mathrm{d} t\, \mathrm{d} x\, \mathrm{d} s\, \mathrm{d} y\leq 0.
	\end{eqnarray}
	Similarly, for the entropy solution $\tilde \rho=\tilde \rho(s,y)$ with $\alpha(y)=\rho(t,x)$
	\begin{eqnarray}\label{eq:2ineq} 
	&&-{\iint\iint}_{(\R^+\times \R)^2} \left(\modulo{\tilde \rho-\rho}\phi_s+\sign{\tilde \rho-\rho}\left(f(s,y,\tilde \rho)-f(s,y,\rho)\right)\phi_x\right)\, \mathrm{d} t\, \mathrm{d} x\, \mathrm{d} s\, \mathrm{d} y\nonumber\\ 
	&&+{\iint\iint}_{(\R^+\times \R-\{0\})^2} \sign{\rho-\tilde \rho} f(t,x,\tilde \rho)_x \phi\,\mathrm{d} t\, \mathrm{d} x\, \mathrm{d} s\, \mathrm{d} y \leq 0.
	\end{eqnarray}
	
	Note that we can write, for each $(t,x)\in \R^+\times \R\setminus\{0\},$
	\begin{eqnarray*}
		&&\sign{\rho-\tilde \rho}(f(t,x,\rho)-f(t,x,\tilde \rho))  \phi_x -\sign{\rho-\tilde \rho} f(t,x,\tilde \rho)_x \phi\\
		&&=\sign{\rho-\tilde \rho}(f(t,x,\rho)-f(s,y,\tilde \rho))\phi_x-\sign{\rho-\tilde \rho}\left[(f(t,x,\tilde \rho)-f(s,y,\tilde \rho))\phi\right]_x,
	\end{eqnarray*}
	so that
	\begin{eqnarray*}
		&&\quad-{\iint\iint}_{(\R^+\times \R)^2}\sign{\rho-\tilde \rho}(f(t,x,\rho)-f(t,x,\tilde \rho))  \phi_x \,\mathrm{d} t\,\mathrm{d} x\,\mathrm{d} s\,\mathrm{d} y \\
		&&\quad+{\iint\iint}_{(\R^+\times \R\setminus\{0\})^2}\sign{\rho-\tilde \rho} f(t,x,\tilde \rho)_x \phi \,\mathrm{d} t\,\mathrm{d} x\,\mathrm{d} s\,\mathrm{d} y\\
		&&=-{\iint\iint}_{(\R^+\times \R)^2}\sign{\rho-\tilde \rho}(f(t,x,\rho)-f(s,y,\tilde \rho))\phi_x\,\mathrm{d} t\,\mathrm{d} x\,\mathrm{d} s\,\mathrm{d} y\\
		&&\quad+ {\iint\iint}_{(\R^+\times \R\setminus\{0\})^2} \sign{\rho-\tilde \rho}\left[(f(t,x,\tilde \rho)-f(s,y,\tilde \rho))\phi\right]_x \,\mathrm{d} t\,\mathrm{d} x\,\mathrm{d} s\,\mathrm{d} y.
	\end{eqnarray*}
	Similarly, writing, for each $(y,s)\in \R^+\times \R\setminus\{0\}$
	\begin{eqnarray*}
		&&\quad \sign{\tilde \rho-\rho}(f(s,y,\tilde \rho)-f(s,y,\rho))  \phi_y -\sign{\tilde \rho- \rho} f(s,y,\rho)_y \phi\\
		&&=\sign{\rho-\tilde \rho}(f(s,y,\tilde \rho)-f(s,y,\rho))\phi_y-\sign{\rho-\tilde \rho}\left[(f(t,x, \rho)-f(s,y, \rho))\phi\right]_x,
	\end{eqnarray*}
	so that
	\begin{eqnarray*}
		&&\quad -{\iint\iint}_{(\R^+\times \R)^2}\sign{\rho-\tilde \rho}(f(s,y,\tilde \rho)-f(s,y, \rho))  \phi_y \,\mathrm{d} t\,\mathrm{d} x\,\mathrm{d} s\,\mathrm{d} y \\
		&&\quad+{\iint\iint}_{(\R^+\times \R\setminus\{0\})^2}\sign{\rho-\tilde \rho} f(s,y, \rho)_y \phi \,\mathrm{d} t\,\mathrm{d} x\,\mathrm{d} s\,\mathrm{d} y\\
		&&=-{\iint\iint}_{(\R^+\times \R)^2}\sign{\rho-\tilde \rho}(f(t,x,\tilde \rho)-f(s,y,\rho))\phi_x\,\mathrm{d} t\,\mathrm{d} x\,\mathrm{d} s\,\mathrm{d} y\\
		&&\quad+ {\iint\iint}_{(\R^+\times \R\setminus\{0\})^2} \sign{\rho-\tilde \rho}\left[(f(t,x, \rho)-f(s,y, \rho))\phi\right]_y \,\mathrm{d} t\,\mathrm{d} x\,\mathrm{d} s\,\mathrm{d} y.
	\end{eqnarray*}
	Let us introduce the notations 
	\begin{eqnarray*}
		\partial_{t+s}&=&\partial_t+\partial_s, \quad \partial_{x+y}=\partial_x+\partial_y,\\
		\partial^2_{x+y}&=&(\partial_x+\partial_y)^2=\partial_x^2+2\partial_x\partial_y+\partial_y^2.
	\end{eqnarray*}
	Adding \eqref{eq:1ineq} and \eqref{eq:2ineq} we obtain 
	\begin{eqnarray}\label{eq:A22}
	&& -{\iint\iint}_{(\R^+\times \R)^2} \left(\modulo{\rho-\tilde \rho}\partial_{t+s}\phi+\sign{\rho-\tilde \rho}\left(f(t,x,\rho)-f(s,y,\tilde \rho)\right)\partial_{x+y} \phi\right)   \,\mathrm{d} t\,\mathrm{d} x\,\mathrm{d} s\,\mathrm{d} y\nonumber\\ 
	&&+{\iint\iint}_{\R^+\times \R\setminus\{0\}} \sign{\rho-\tilde \rho} \left(\partial_x\left[ (f(t,x,\tilde \rho)-f(s,y,\tilde \rho)) \phi \right] 
	\right.\nonumber\\  
	&&\quad\left.+ \partial_y \left[(f(t,x,\rho)-f(s,y,\rho)) \phi \right]\right)  \,\mathrm{d} t\,\mathrm{d} x\,\mathrm{d} s\,\mathrm{d} y \leq 0.
	\end{eqnarray}
	We introduce a non-negative function $\delta\in \Cc\infty(\R),$ satisfying $\delta(\sigma)=\delta(-\sigma),\, \delta(\sigma)=0$ for $\modulo{\sigma}\geq 1,$ and $\int_{\R} \delta(\sigma) d \sigma=1.$ For $u>0$ and $z\in\R,$ let $\delta_{p}(z)=\frac{1}{p}\delta(\frac{z}{p}).$ We take our test function $\phi=\phi(t,x,s,y)$ to be of the form 
	\begin{eqnarray*}
		\Phi(t,x,s,y)= \phi\left(\frac{t+s}{2},\frac{x+y}{2}\right)\delta_p\left(\frac{x-y}{2}\right) \delta_p\left(\frac{t-s}{2} \right),
	\end{eqnarray*}
	where $0\leq \phi\in \Cc\infty\left(\R^+\times \R\setminus{0}\right)$ satisfies  
	\begin{eqnarray*}
		\phi(t,x)=0, \quad \forall(t,x)\in[-h,h]\times[0,T],
	\end{eqnarray*}
	for small $h>0.$ By making sure that $p<h, $
	one can check that $\Phi$ belongs to $\Cc\infty\left(\left(\R^+\times \R\setminus\{0\}\right)^2\right).$
	We have 
	\begin{eqnarray*}
		\partial_{t+s} \Phi(t,x,s,y)= 	\partial_{t+s} \phi\left( \frac{t+s}{2},\frac{x+y}{2}\right)\delta_p\left(\frac{x-y}{2}\right) \delta_p\left(\frac{t-s}{2} \right),\\
		\partial_{x+y} \Phi(t,x,s,y)= 	\partial_{x+y} \phi\left(\frac{t+s}{2},\frac{x+y}{2}\right)\delta_p\left(\frac{x-y}{2}\right) \delta_p\left(\frac{t-s}{2} \right),
	\end{eqnarray*}
	and using $\Phi$ as test function in (\ref{eq:A22}) 
	\begin{eqnarray*}
		&& -{\iint\iint}_{(\R^+\times \R)^2} \left(I_1(t,x,s,y)+I_2(t,x,s,y)\right)\delta_p\left(\frac{x-y}{2}\right) \delta_p\left(\frac{t-s}{2} \right) \,\mathrm{d} t\,\mathrm{d} x\,\mathrm{d} s\,\mathrm{d} y\\
		&&\leq {\iint\iint}_{(\R^+\times \R\setminus\{0\})^2} \left(I_3(t,x,s,y)+I_4(t,x,s,y)+I_5(t,x,s,y)\right) \,\mathrm{d} t\,\mathrm{d} x\,\mathrm{d} s\,\mathrm{d} y,
	\end{eqnarray*}
	where 
	\begin{eqnarray*}
		I_1&=&\modulo{\rho(t,x)-\tilde \rho(s,y)} \partial_{t+s} \phi\left(\frac{t+s}{2},\frac{x+y}{2}\right),\\
		I_2&=&\sign{\rho(t,x)-\tilde \rho(s,y)} (f(t,x,\rho)-f(s,y,\tilde \rho)) \partial_{x+y} \phi\left(\frac{t+s}{2}, \frac{x+y}{2}\right),\\
		I_3&=&-\sign{\rho(t,x)-\tilde \rho(s,y)} \left(\partial_x f(t,x,\tilde \rho) -\partial_y f(s,y,\rho)\right)\phi\left(\frac{t+s}{2},\frac{x+y}{2},\right) \\
		&&\hspace{4em}	\hspace{4em}\times\delta_p\left(\frac{x-y}{2}\right)\delta_p\left(\frac{t-s}{2} \right),\\
		I_4&=&-\sign{\rho(t,x)-\tilde \rho(s,y)} \delta_p\left(\frac{x-y}{2}\right) \delta_p\left(\frac{t-s}{2} \right)\\
		&&	\hspace{4em}	\hspace{4em}\times\left[ \partial_x\phi\left(\frac{t+s}{2},\frac{x+y}{2},\right) (f(t,x,\tilde \rho) -f(s,y, \tilde \rho))\right.\\
		&&\hspace{4em}	\hspace{4em}		\left.		\times \partial_y\phi\left(\frac{t+s}{2},\frac{x+y}{2}\right) (f(t,x, \rho) -f(s,y,\rho))\right],\\
		I_5&=&\left(F(x,\rho(t,x),\tilde \rho(s,y))-F(y,\rho(t,x),\tilde \rho(s,y))\right)\phi\left(\frac{t+s}{2},\frac{x+y}{2}\right) \\
		&&\hspace{4em}\hspace{4em}\times\partial_x\delta_p\left(\frac{x-y}{2}\right) \delta_p\left(\frac{t-s}{2} \right),
	\end{eqnarray*}
	where $F(x,\rho,c):=\sign{(\rho-c)}\left(f(t,x,\rho)-f(t,x,c)\right).$ We now use the change of variables 
	\begin{eqnarray*}
		\tilde x =\frac{x+y}{2}, \quad 	\tilde t =\frac{t+s}{2}, \quad z =\frac{x-y}{2},\quad \tau =\frac{t-s}{2},
	\end{eqnarray*}
	which maps $(\R^+\times \R)^2$ in $\Omega\subset \R^4$  and $(\R^+\times \R\setminus\{0\})^2$ in $\Omega_0\subset \R^4,$ where
	\begin{eqnarray*}
		&	\Omega=\{(\tilde x, \tilde t,z, \tau)\in\R^4 :\,0<\tilde t \pm \tau<T\}, \qquad \Omega_0=\{(\tilde x, \tilde t,z, \tau)\in\Omega : \tilde x \pm z\neq 0\},
	\end{eqnarray*}
	respectively. 
	With this changes of variables  {we can rewrite}
	\begin{eqnarray*}
		&\partial_{t+s} \phi\left(\frac{t+s}{2},\frac{x+y}{2}\right)=\partial_{\tilde t} \phi(\tilde t, \tilde x),\qquad 
		\partial_{x+y} \phi\left(\frac{t+s}{2},\frac{x+y}{2}\right)=\partial_{\tilde x} \phi(\tilde t, \tilde x).	\end{eqnarray*}
	Now we can write 
	\begin{eqnarray*}
		&&-{\iint\iint}_{\Omega} \left(I_1(\tilde t, \tilde x,\tau,z)+I_2(\tilde t, \tilde x,\tau,z)\right)\delta_p\left(z\right) \delta_p\left(\tau \right) \,\mathrm{d}\tilde t\,\mathrm{d} \tilde x\,\mathrm{d}\tau\,\mathrm{d} z\\
		&&\quad\leq {\iint\iint}_{\Omega_0} \left(I_3(\tilde t, \tilde x,\tau,z)+I_4(\tilde t, \tilde x,\tau,z)+I_5(\tilde t, \tilde x,\tau,z)\right) \,\mathrm{d}\tilde t\,\mathrm{d} \tilde x\,\mathrm{d}\tau\,\mathrm{d} z,
	\end{eqnarray*}
	where
	\begin{eqnarray*}
		I_1(\tilde t, \tilde x,\tau,z)&=&\modulo{\rho(\tilde t+\tau,\tilde x+z)-\tilde \rho(\tilde t-\tau,\tilde x-z)} \partial_{\tilde t} \phi\left(\tilde t,\tilde x\right),\\
		I_2(\tilde t, \tilde x,\tau,z)&=&\sign{ \rho(\tilde t+\tau,\tilde x+z)-\tilde \rho(\tilde t-\tau,\tilde x-z) }\\
		&& \times(f(\tilde t+\tau,\tilde x+z,\rho)-f(\tilde t-\tau,\tilde x-z,\tilde \rho)) \partial_{\tilde x} \phi\left(\tilde t,\tilde x\right),\\
		I_3(\tilde t, \tilde x,\tau,z)&=&-\sign{ \rho(\tilde t+\tau,\tilde x+z)-\tilde \rho(\tilde t-\tau,\tilde x-z) } \\
		&&\times\left(\partial_{\tilde x+z} f(\tilde t+\tau,\tilde x+z,\tilde \rho) -\partial_{\tilde x -z} f(\tilde t-\tau,\tilde x-z,\rho)\right)\phi\left(\tilde t,\tilde x\right)\delta_p\left(z\right) \delta_p\left(\tau \right),\\
		I_4(\tilde t, \tilde x,\tau,z)&=&-\sign{ \rho(\tilde t+\tau,\tilde x+z)-\tilde \rho(\tilde t-\tau,\tilde x-z) }\\
		&&\times \partial_{\tilde x}\phi\left(\tilde t,\tilde x\right) \delta_p\left(z\right) \delta_p\left(\tau \right)\left[ (f(\tilde t+\tau,\tilde x+z,\tilde \rho) -f(\tilde t-\tau,\tilde x-z, \tilde \rho))\right.\\ 
		&&\hspace{8em}\left.+(f(\tilde t+\tau,\tilde x+z, \rho) -f(\tilde t-\tau,\tilde x-z,\rho))\right],\\
		I_5(\tilde t, \tilde x,\tau,z)&=&\left(F(\tilde x+z,\rho(\tilde t+\tau,\tilde x+z),\tilde \rho(\tilde t-\tau,\tilde x-z))\right.\\
		&&\left.-F( \tilde x-z,\rho(\tilde t+\tau,\tilde x+z),\tilde \rho(\tilde t-\tau,\tilde x-z))\right)\phi\left(\tilde t, \tilde x\right) \partial_z\delta_p\left(z\right) \delta_p\left(\tau \right).
	\end{eqnarray*}
	Employing Lebesgue's differentiation theorem, to obtain the following limits
	\begin{eqnarray*}
		&&\ \lim_{p\to 0} {\iint\iint}_{\Omega} I_1(\tilde t, \tilde x,\tau, z) \delta_{p}(z)\delta_{p}(\tau)\, \mathrm{d}\tilde t\, \mathrm{d} \tilde x\, \mathrm{d}\tau\, \mathrm{d} z \\
		&& \hspace{7em}=\iint_{\R^+\times \R} \modulo{\rho(t,x)-\tilde \rho(t,x)} \partial_t \phi(t,x) \mathrm{d} t \mathrm{d} x,\\
		&& \lim_{p\to 0} {\iint\iint}_{\Omega} I_2(\tilde t, \tilde x, \tau, z) \delta_{p}(z)\delta_{p}(\tau)\, \mathrm{d}\tilde t\, \mathrm{d} \tilde x\, \mathrm{d}\tau\, \mathrm{d} z\\
		&& \hspace{7em}= \iint_{\R^+\times \R} \sign{ \rho(t,x)-\tilde \rho(t,x))(f(t,x,\rho)-f(t,x,\tilde \rho) } \partial_x \phi(t,x) \mathrm{d} t \mathrm{d} x.	\end{eqnarray*}
	Let us consider the term $I_3.$ Note that $I_3(\tilde t, \tilde x,\tau, z)=0$, if $\tilde x\in[-h,h],$ since then $\phi(\tilde t,\tilde x)=0$ for any $\tilde t,$ or if $\modulo{z}\geq p.$ On the other hand, if $\tilde x \not\in [-h,h],$ then $\tilde x\pm z<0$ or $\tilde x\pm z>0,$ at least when $\modulo{z}<p$ and $p<h.$
	Defining $U(t,x)=1-\omega_\eta\ast \rho$ and $V(t,x)=1-\omega_\eta\ast \tilde \rho,$  and sending $p\to0:$
	\begin{eqnarray*}
		&& \lim_{p\to 0} {\iint\iint}_{\Omega_0} I_3(\tilde t, \tilde x,  \tau, z) \, \mathrm{d}\tilde t\, \mathrm{d} \tilde x\, \mathrm{d} \tau\, \mathrm{d} z\\
		&& =\iint_{\R^+\times \R\setminus\{0\}} \sign{ \rho(t,x)-\tilde \rho(t,x) } \mathfrak{v}(x) \left(\tilde \rho g(\tilde \rho) \partial_x V- \rho g(\rho) \partial_x U\right) \phi\left(t,x\right) \, \mathrm{d} t \, \mathrm{d} x\\
		&& \leq k_r\norma{\partial_x V} \norma{g'}\iint_{\R^+\times \R\setminus\{0\}} \modulo{\rho-\tilde \rho} \phi(t,x)\, \mathrm{d} t \, \mathrm{d} x +k_r \iint_{\R^+\times \R\setminus\{0\}}\modulo{\rho g(\rho)} \modulo{\partial_x V- \partial_x U}\, \mathrm{d} t \, \mathrm{d} x\\
		&& \leq  K_1 \iint_{\R^+\times \R\setminus\{0\}} \modulo{\rho-\tilde \rho} \phi(t,x)\, \mathrm{d} t \, \mathrm{d} x, 
	\end{eqnarray*}
	where 
	\begin{eqnarray*}
		\mathfrak{v}(x)=\begin{cases}
			k_l,& \quad \hbox{if $x<0$},\\
			k_r,& \quad \hbox{if $x>0$}.
		\end{cases}
	\end{eqnarray*}		
	In fact, 
	\begin{eqnarray*}
		\modulo{\partial_{x}V-\partial_x U}&\leq& \norma{\omega'_\eta}\,  \,\norma{u(t,\cdot)-v(t,\cdot)}_{L^1}\\
		&&+\omega_\eta(0)
		\left( \modulo{u- v}(t,x+\eta)+ \modulo{u-v}(t,x) \right).
	\end{eqnarray*}
	The term $I_4$ converges to zero as $p\to0.$
	Finally, the term $I_5$ 
	\begin{equation}
	\lim_{p\to 0} {\iint\iint}_{\Omega_0} I_5(\tilde t, \tilde x, \tau, z) \, \mathrm{d} \tilde t\, \mathrm{d} \tilde x\, \mathrm{d}\tau\, \mathrm{d} z\leq  K_2 \iint_{\R^+\times \R\setminus\{0\}} \modulo{\rho-\tilde \rho} \phi(t,x)\, \mathrm{d} t \, \mathrm{d} x. 
	\end{equation}
\end{proof}

\section{Numerical simulations}\label{sec:numericaltests}
In this section, we propose some numerical tests in order to illustrate the dynamics of the non-local model (\ref{eq:CL}) with flux function discontinuous at $x=0$ and compare it with the local case. We solve the equation (\ref{eq:CL}) in an interval containing $x=0$ using the numerical scheme described in subsection (\ref{sec:numericalscheme})  for different values of $\Delta x$. For each integration, we set $\Delta t$ such that satisfies the CFL condition (\ref{cfl}), and for all tests we choose $\omega(x)=\frac{2(\eta-x)}{\eta^2}$ for $0\leq x\leq\eta$ and absorbing boundary conditions. The reference solution is computed with $\Delta x=1/1280.$

\subsection{Example 1.} We consider the initial condition $$\rho_0(x)=\begin{cases}
0.9 \quad x\in [-0.5,1.5]\\ 0.1 \quad \hbox{otherwise, } \end{cases}$$ 
$\psi(\rho)=1-\rho$, which satisfies the hypothesis \eqref{hyp_v} and $\eta=0.4$. In \textbf{Case I } we take $k_l=3$ and $k_r=1$, i.e. $v_l(\rho)>v_r(\rho)$. In Fig \ref{fig:examp1}(Left) we display the  approximated solution for $\Delta x=1/320$ at different final times $T=0.5, 1.0, 1.5, 2.0$. We can observe the formation of a stationary shock wave at $x=0$ and a queue travelling backward. We observe that the solution satisfies the maximum principle according with Lemma \ref{bounds}. 

In \textbf{Case II } we take $k_l=1$ and $k_r=3$, i.e.  $v_l(\rho)<v_r(\rho)$. In Fig \ref{fig:examp1}(Right) we display the numerical solution for $\Delta x=1/320$ at different final times $T=1.0, 2.0$. We can observe the formation of a rarefaction wave at the right of $x=0$ and the density diminishes at the left of $x=0.$ 

The $\mathbf{L}^1$-error for different $\Delta x$ at $T=2$ are computed in Table \ref{tab:example1}. 

\begin{figure}
	\centering
	\includegraphics[width=0.49\textwidth]{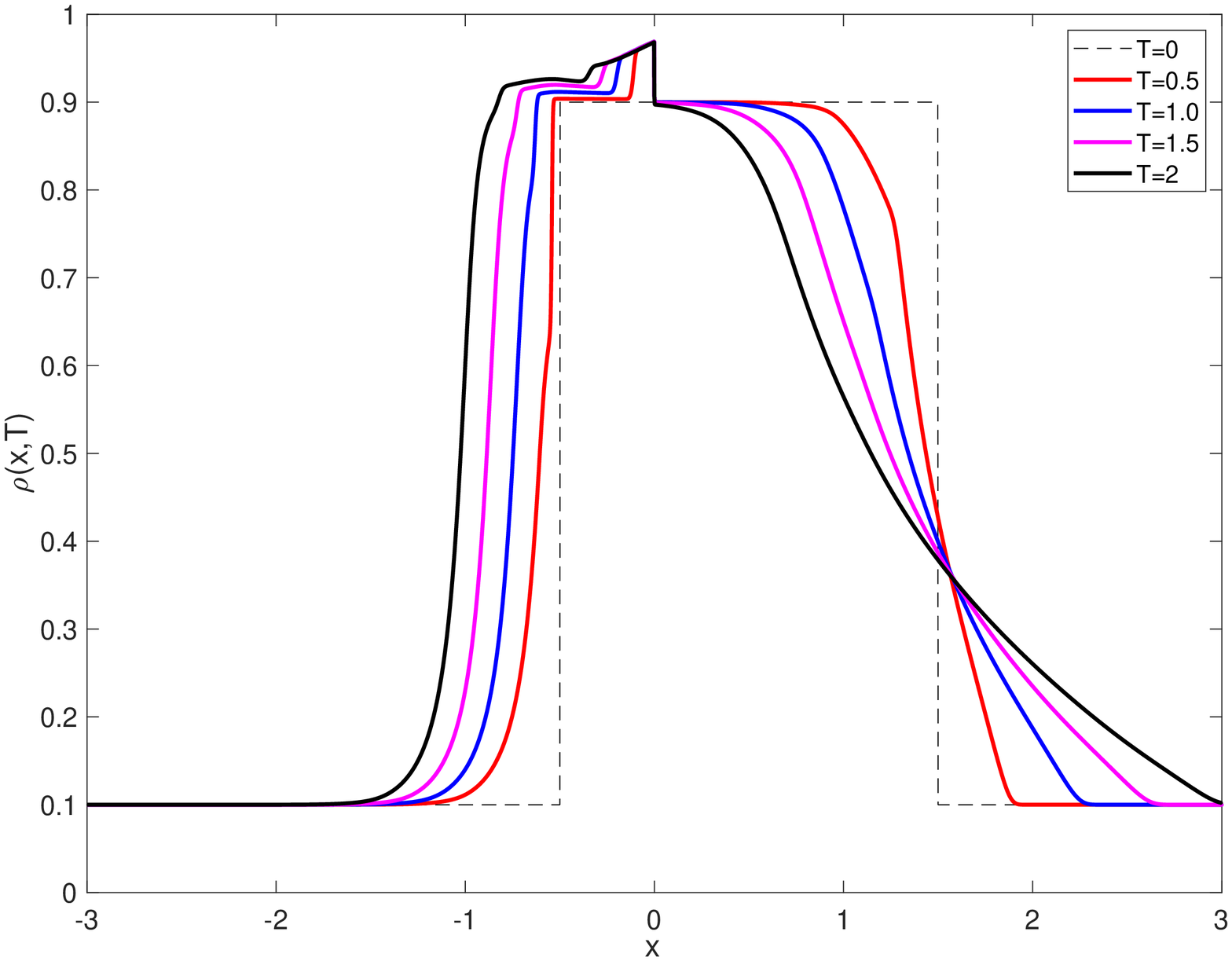} \includegraphics[width=0.49\textwidth]{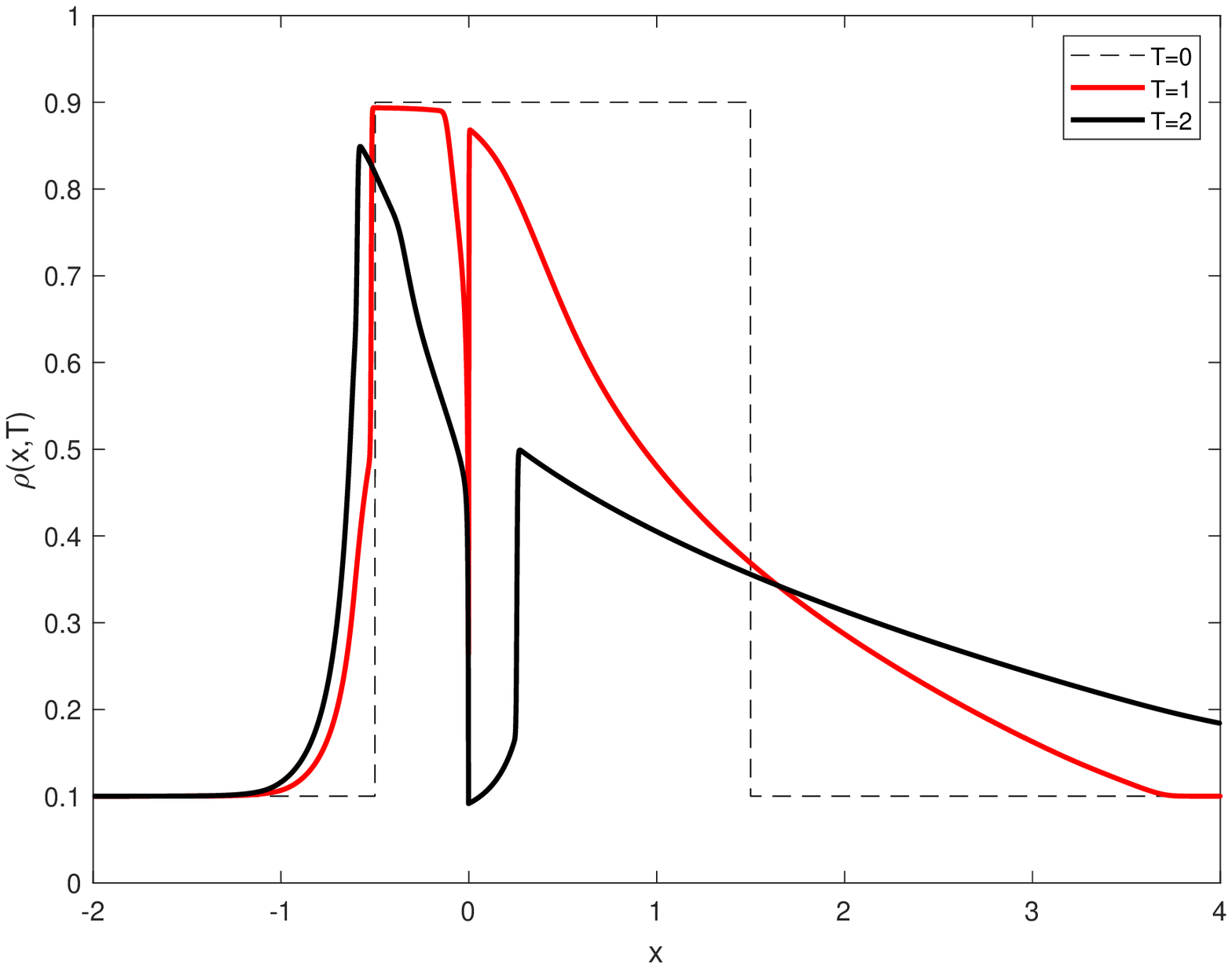}
	\caption{Example 1: Dynamics of model \eqref{eq:CL} (Left) Case $v_l(\rho)>v_r(\rho)$, (Right) Case $v_l(\rho)<v_r(\rho)$}
	\label{fig:examp1}
\end{figure}

\begin{table}[h]
	\centering
	\begin{tabular}{|c|c|c||c|c|}\hline
		&\multicolumn{2}{|c||}{Cases I} & \multicolumn{2}{|c|}{Cases II} \\\hline
		$\Delta x$ & $\mathbf{L}^1$-error & E.O.A.& $\mathbf{L}^1$-error & E.O.A.\\\hline
		$\frac{1}{40}$ & $5.7e-2$ & $-$  & $9.8e-2$& $-$\\
		$\frac{1}{80}$ & $2.8e-2$ & $1.0$ & $5.0e-2$& $1.0$\\
		$\frac{1}{160}$ & $1.4e-2$ & $1.0$ & $2.3e-2$& $1.1$ \\
		$\frac{1}{320}$ & $6.5e-3$ & $1.1$ & $1.1e-2$& $1.1$\\
		$\frac{1}{640}$& $2.4e-3$ & $1.4$ & $5.0e-3$& $1.0$ \\\hline
	\end{tabular}
	\caption{Example 1. $\mathbf{L}^1$-error and Experimental Order of Accuracy at time $T=2$.}
	\label{tab:example1}
\end{table}

\subsection{Example 2: Limit $\eta\to 0^+$.}
In this example, we investigate the numerical convergence of the approximate solution computed with the numerical scheme (\ref{scheme})-(\ref{numflux})  to the solution of the local conservation law with discontinuous flux under hypothesis (\ref{hyp_v}), as the support of the kernel function $\omega_\eta$ tends to $0^+$. In particular, we show numerical solutions at final time $T=2,$ with $\Delta x=1/1600$ and $\eta=\{0.1,0.02,0.005\}$. To evaluate the convergence, we compute the $\mathbf{L}^1$ distance between the approximate solution of the non-local problem with a given $\eta$ and the results of the classical Godunov scheme for the corresponding local problem. In Table  \ref{tab:l1distance}, we can observe than the $\mathbf{L}^1$ distance goes to zero when $\eta\to0^+$. The results are illustrated in Fig  \ref{fig:examp2}. 
\begin{figure}
	\centering
	\includegraphics[width=0.49\textwidth]{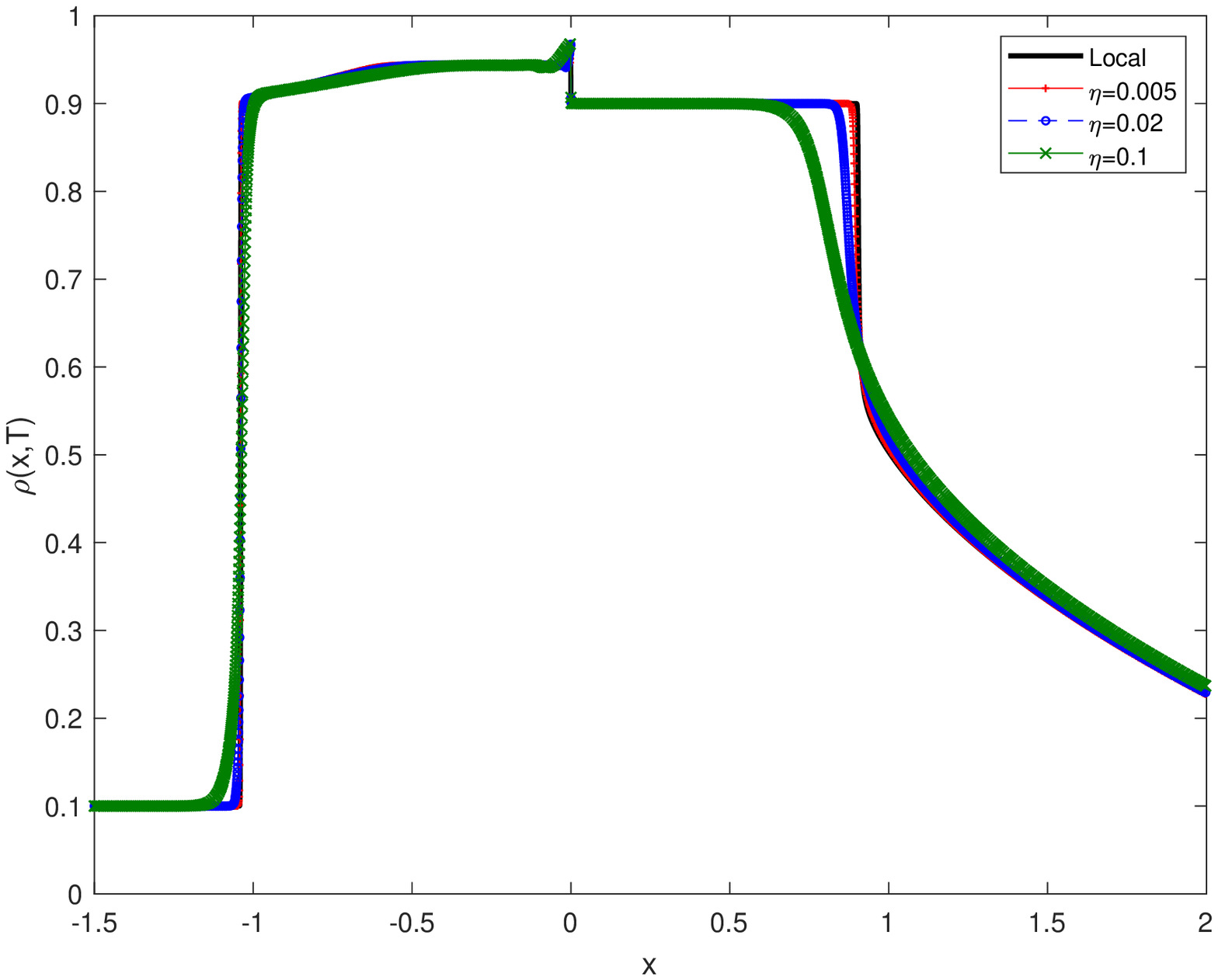} \includegraphics[width=0.49\textwidth]{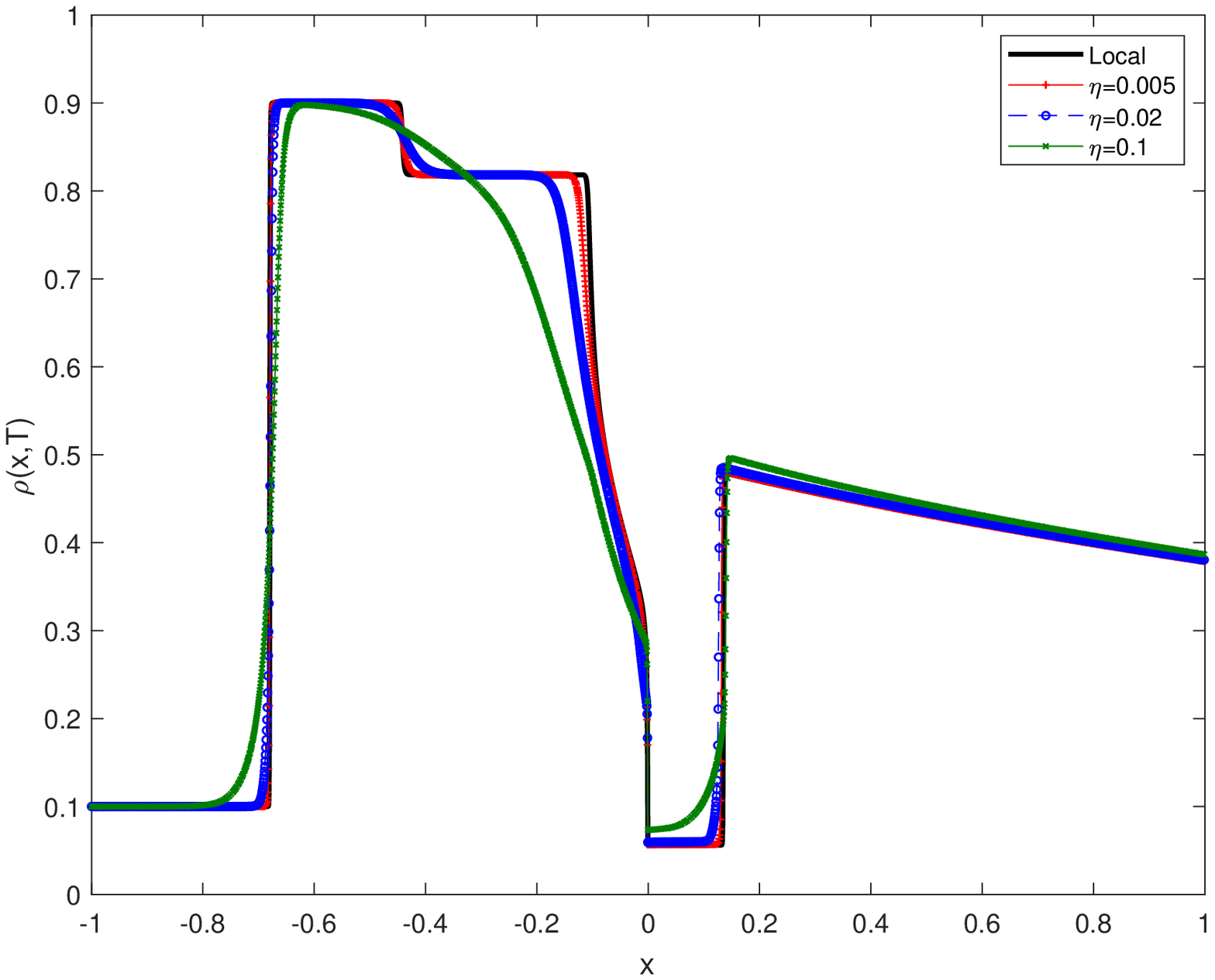} 
	\caption{Example 2. Limit $\eta\to0^+$, numerical approximations at final time $T=0.7$ with $\Delta x=1/3200$. (Left) Case I, (Right) Case II.}
	\label{fig:examp2}
\end{figure}

\begin{table}[]
	\centering
	\begin{tabular}{|c|ccc|}\hline
		& \multicolumn{3}{|c|}{$\mathbf{L}^1$ distance}\\\hline
		$\eta$    &  $0.1$ & $0.02$  & $0.005$ \\ \hline
		\textbf{Case I}  & 7.4e-2 &    2.2e-2 & 6.3e-3 \\\hline
		\textbf{Case II}  & 8.4e-2 &    2.8e-2 & 7.8e-3 \\\hline
	\end{tabular}
	\caption{Example 3. $\mathbf{L}^1$ distance between the approximate solutions to the non-local problem and the local problem for different values of $\eta$ at $T=2$ with $\Delta x=1/1600.$}
	\label{tab:l1distance}
\end{table}

\section{Conclusions and discussions}
In this paper, we have studied a non-local conservation law whose flux function is of the form $H(-x)\rho g(\rho) v_l(\omega_\eta*\rho)+H(x)\rho g(\rho) v_r(\omega_\eta*\rho)$, with a single spatial discontinuity at $x=0$ and the velocity functions satisfy the hypothesis (\ref{hyp_v}). 
We have approximated the problem through an upwind-type numerical scheme, which is a general version of the scheme proposed in \cite{2018Gottlich},
and have provided $\mathbf{L}^\infty$ and $\mathbf{BV}$ estimates for the approximate solutions. Thanks to these estimates, we have proved the well-posedness, i.e., existence and uniqueness of a weak entropy solutions.
Numerical simulations illustrate the dynamics of the studied model and corroborate the convergence of the numerical scheme. The limit model as the kernel support tends to zero is numerically investigated.




\end{document}